\def\eps{{\varepsilon}}
\def\O{\Omega}
\def\R{\mathbb{R}}
\newcommand{\be}{\begin{equation}}
\newcommand{\ee}{\end{equation}}
\newcommand{\loc}{\mathrm{loc}}
\numberwithin{equation}{section}
\theoremstyle{definition}
\newtheorem{theo}{Theorem}[section]
\newtheorem{theorem}[theo]{Theorem}
\newtheorem{lemma}[theo]{Lemma}
\newtheorem{corollary}[theo]{Corollary}
\newtheorem{prop}[theo]{Proposition}
\newtheorem{definition}[theo]{Definition}
\theoremstyle{remark}
\newtheorem{remark}[theo]{Remark}
\def\XXint#1#2#3{{\setbox0=\hbox{$#1{#2#3}{\int}$ }
\vcenter{\hbox{$#2#3$ }}\kern-.6\wd0}}
\title{Some remarks on singular capillary cones with free boundary}
\author[A.~Pacati, G.~Tortone, B.~Velichkov]{Alberto Pacati, Giorgio Tortone, Bozhidar Velichkov}
\address{Alberto Pacati \newline \indent
 Department of Mathematics, ETH Zurich,\newline
\indent
 Ramistrasse 101, 8092 Zurich, Switzerland}
 \email{alberto.pacati@math.ethz.ch}
\address {Giorgio Tortone \newline \indent
 	Dipartimento di Matematica, Universit\`a di Pisa \newline \indent
 	Largo Bruno Pontecorvo, 5, 56127 Pisa, Italy}
 \email{giorgio.tortone@dm.unipi.it}
\address {Bozhidar Velichkov \newline \indent
Dipartimento di Matematica, Universit\`a di Pisa \newline \indent
Largo Bruno Pontecorvo, 5, 56127 Pisa, Italy}
\email{bozhidar.velichkov@unipi.it}
\begin{document}

\thanks{{\bf Acknowledgments.}
	G.T. and B.V. are supported by the European Research Council's (ERC) project n.853404 ERC VaReg - \it Variational approach to the regularity of the free boundaries\rm, financed by the program Horizon 2020.
	G.T. is member of INdAM-GNAMPA}

\subjclass[2020] {49Q05, 49Q10, 35R35, 35B07, 35B65}
\keywords{Capillary surfaces, %stability condition, 
singularity, global stable solutions, one-phase Bernoulli problem}

\begin{abstract}
We study minimizing singular cones 
with free boundary associated with the capillarity problem. Precisely, we provide a stability criterion \emph{à la} Jerison-Savin for capillary hypersurfaces and show that, in dimensions up to $4$, minimizing cones with non-sign-changing mean curvature are flat. We apply this criterion to minimizing capillary drops and, additionally, establish the instability of non-trivial axially symmetric cones in dimensions up to $6$.

The main results are based on a Simons-type inequality for a class of convex, homogeneous, symmetric functions of the principal curvatures, combined with a boundary condition specific to the capillary setting.
\end{abstract}

\maketitle
\setcounter{tocdepth}{1}
%\tableofcontents
\section{Introduction}

Given a smooth domain $D\subset \R^{n+1}$, $\sigma \in C^1(\partial D;(-1,1))$ and $g\in C^1(\overline{D})$, the capillarity problem with free boundary consists in constructing hypersurfaces $M\subset \overline{D}$ satisfying 
$$
H_M = g \quad\mbox{on}\quad M\cap D,\qquad 
\cos(\theta)=\sigma \quad\mbox{on}\quad  M\cap \partial D,
$$
where $H_M$ is the mean-curvature of $M$ and $\theta$ is the contact angle between $M$ and the container walls $\partial D$. Through a more variational approach, it is possible to obtain such interfaces as local minimizer of the functional 
\begin{equation}\label{e:functional-macro}
    \mathcal{F}(E):=\mathcal{H}^{n}(\partial E\cap D) + \int_{E} g\,dx -\int_{\partial E\cap \partial D}\sigma\,d\mathcal{H}^{n},
\end{equation}
among set $E \subset \R^{n+1}$ of locally finite perimeter
in $D$ (see \cite{deMasi-dePhilippis,hong-saturnino,minmax} for a min-max construction). Analogously to the classical Gauss free energy model describing the equilibrium states of  incompressible fluids, the previous functional consists in three terms: the \emph{free surface energy}, 
proportional to the perimeter of the separating interface between the fluid and its surrounding, the \emph{gravitational energy} and the \emph{wetting free energy}, which accounts for the adhesion between the fluid and the container walls $\partial D$ (see \cite{finn,young} for an historical survey on the subject).

The existence, regularity, and geometric properties of capillary
surfaces have been actively studied in recent years with a major attention to their geometric applications (see the seminal papers \cite{almgren,DeG,gruter2,gruter-jost}, and more recent contributions \cite{dephimaggi,demasi-edelen-gasparetto-li,fraser-schoen,li,rossouam,wang-xia}). From the regularity perspective, however, the nature of the problem is twofold: on one hand, the interior regularity of $M$ is deeply connected to the well-established regularity theory for minimal surfaces, where the scenario is fully understood. On the other hand, the interaction of $M$ with the domain walls $\partial D$ gives rise to phenomena characteristic of free boundary problems, leaving several fundamental questions still open.\\

Naturally, a special case of primary interest occurs when we seek to non-parametric capillary hypersurfaces defined as graphs of function $u\colon \Omega\subset \R^n \to \R$. In such case, the problem consists into constructing solutions to the following boundary value problem: 
$$
\mathrm{div}\left(\frac{\nabla u}{\sqrt{1+|\nabla u|^2}}\right) = g\quad\mbox{in }\Omega\cap\{u>0\},\qquad
|\nabla u| = \sqrt{\frac{1-\sigma^2}{\sigma^2}}\quad\mbox{on }\Omega\cap\partial\{u>0\}.
$$
A variational approach to this problem was first introduced by Caffarelli and Friedman in \cite{cf}.
As observed by the authors, the non-parametric problem can be framed as a 
quasi-linear generalization of the classical one-phase Bernoulli free boundary problem (see also \cite{cm,karak}). Specifically, by replacing the area functional with the Dirichlet energy of the graph of $u$, the minimization problem aligns with the one studied by Alt and Caffarelli in \cite{altcaf} (see also \cite{cjk,desilva,dj,js,v,w}).

\subsection{Boundary regularity and the notion of free boundary}\label{sub:intro-topology} In what follows, given $D\subset \R^{n+1}$, we say that a set $E \subset \overline{D}$ is {\it open} if it is {\it relatively open in $\overline{D}$}. Hence, if $E\subset \overline{D}$ is open, then also $E\cap\partial D$ is open in $\partial D$. Finally, we denote by $M:=\partial E$ {\it the boundary of $E$} with respect to the topology of $\overline{D}$ and by $\partial M:=M\cap \partial D$  the {\it free boundary of $M$}.\\ 
For instance, according to this convention, for any non-negative continuous function $u\colon \R^n \to \R$, we can define a graphical hypersurface $M$ as the boundary of the set 
$$E=\Big\{x\in\R^{n+1}\ :\ 0\le x_{n+1}<u(x_1,\dots,x_n)\Big\}.$$
In this case, the free boundary $\partial M$ is precisely the boundary of the support of $u$ in $\R^n$ and the hypersurface $M$ is said to be \emph{graphical}. Moreover, in what follows, we set $\R^{n+1}_+$ to be the half-space $\{x_{n+1}>0\}$. 

Given a smooth domain $D \subset \R^{n+1}$, a set of locally finite perimeter $E\subset \R^{n+1}$ is said to be a minimizer of $\mathcal{F}$
in $U\subset \R^{n+1}$ if
$$
\mathcal{F}(E\cap U)\leq\mathcal{F}(\widetilde{E}\cap U),
$$
among all set of locally finite perimeter $\widetilde{E}\subset \R^{n+1}$ satisfying $E\Delta \widetilde{E}\subset \subset U$. In such case, the boundary $M:=\partial E$, is said to be a \emph{minimizing capillary surface}. For the study of singular points it is natural to introduce the concept of capillary cones. Precisely, given $\theta \in (0,\pi)$ we set
\begin{equation}\label{e:functional-blow}
    \mathcal{F}_\theta(E):=\mathcal{H}^{n}(\partial E\cap \R^{n+1}_+)-\cos(\theta)\mathcal{H}^{n}(\partial E\cap \{x_{n+1}=0\}),
\end{equation}
for every set of locally finite perimeter $E\subset \R^{n+1}$. Therefore, if $E$ is an homogeneous set minimizing $\mathcal{F}_\theta$ in every compact set of $\R^{n+1}$, we say that $M:=\partial E\subset \{x_{n+1}\geq 0\}$ is a \emph{minimizing capillary cone}.\\

Finally, we collect here the main known result regarding the regularity of minimizing capillary surfaces $M$, both for interior in $D$ and for its free boundary $\partial M$.

\subsection{Regularity in the interior of $D$}
Let $M$ be a minimizing capillary surface in $D$. Then, the interior of $M$ in $D$ can be decomposed as the disjoint union of a regular and singular part:
	\begin{enumerate}
		\item[\rm (i)] $\mathrm{Reg}(M)$ is locally a smooth hypersurface 
		with prescribed mean curvature, namely
  \begin{equation}\label{e:criticality-M}
    H_M=g\qquad \mbox{on}\quad M\cap D;
\end{equation}
		\item[\rm (ii)] the singular set $\mathrm{Sing}(M)$ is relatively closed and $
\mathrm{dim}_{\mathcal{H}}(\mathrm{Sing}(M))\leq n-7$.\vspace{0.1cm}
	\end{enumerate}
The regularity of $M$ in the interior of $D$ follows by the classical theory of minimal surfaces (see the monographs \cite{maggi-lecture,simon-lecture}). We stress that, in light the construction of Simons \cite{simons}, the previous Hausdorff estimate on the singular set of $M$ is optimal. For instance, by exploiting the simpler proof of \cite{schoen-simon-yau}, it is possible to show non-existence of cones with isolated singularity in dimensions up to $7$. Finally, the optimality follows by proving minimality of the Simons cone (see \cite{bombieri-degiorgi-giusti}). 

\subsection{Regularity of the free boundary $\partial M$}
Let us settle few notations related to the vector frame associated to regular points of $M$ and the free boundary $\partial M$: 
\begin{enumerate}
\item[\rm (i)] $\nu$ is the unit normal vector field
of $M$ that points out of $E$; 
\item[\rm (ii)] $\eta$ is the unit normal vector field of $\partial M$ in $M$ that points
out of $\overline{D}$;
\item[\rm (iii)] $\overline{\nu}$ is
the unit normal vector field of $\partial M$ in $\partial D$ that points out of $E \cap \partial D$.
\end{enumerate}
Thus, we can decomposed the free boundary $\partial M$ into a disjoint union of a (relatively open) regular part $\,\mathrm{Reg}(\partial M)$ and a singular part $\,\mathrm{Sing}(\partial M)$ such that:\vspace{0.1cm}
	\begin{enumerate}
		\item[\rm (i)] in a neighborhood of a point in  $\mathrm{Reg}(\partial M)$, the boundary $M$ is a $n$-dimensional $C^{1,\alpha}$ manifold with boundary that meets the domain walls $\partial D$ with a precise contact angle. Indeed, 
\be\label{e:criticality-partialM}
\cos(\theta) = \sigma\qquad\mbox{on}\quad\mathrm{Reg}(\partial M),
\ee
where $\theta$ is the angle between the normal $\nu$ and the inward normal vector of $\partial D$. Moreover, $\mathrm{Reg}(\partial M)$ is locally a $(n-1)$-dimensional $C^{1,\alpha}$ embedded submanifold of $\partial D$;\\
		\item[\rm (ii)] 
the singular set $\mathrm{Sing}(\partial M)$ is relatively closed and there exists a critical dimension $n^*=n^*(\theta)$,
possibly depending on the value of $\theta$,  such that
$$
\mathrm{dim}_{\mathcal{H}}(\mathrm{Sing}(\partial M))\leq n-n^*(\theta). \vspace{0.1cm}
$$
	\end{enumerate}
The result for the regular set is based on the validity of an $\eps$-regularity theorem for minimizing capillary surfaces (see \cite{taylor} for $n=2$, \cite{dephimaggi} for $n>3$ and \cite{demasi-edelen-gasparetto-li} in the framework of varifolds).\\ By exploiting then a Federer's dimension reduction, it is possible to characterize the threshold $n^*(\theta)$ as the smallest dimension at which minimizing capillary cones in $\{x_{n+1}\geq 0\}$ exhibit isolated singularities. Naturally, below this dimensional threshold, the only minimizing cone is the flat one.\\ 

Initially, in \cite{taylor} Taylor proved the first estimate on the dimension of the singular set of $\partial M$ by showing that $n^*(\theta)\geq 3$ for every contact angle (see \cite{dephimaggi} for the anisotropic analogue). On the other hand, in the special case $\theta=\frac{\pi}{2}$ (i.e. free boundary minimal surface), in \cite{gruter} Gr\"{u}ter proved the (optimal) estimate $n^*(\frac{\pi}{2}) = 7$.

Only recently, in \cite{cel} Chodosh, Edelen and Li provided an initial improvement by exploiting the asymptotic behavior of minimizing capillary cones for contact angles close to zero (i.e. close to the one-phase Bernoulli free boundary problem) and to $\frac{\pi}{2}$ (i.e. close to free boundary minimal surfaces). Indeed, they prove that $n^*(\theta)\geq 4$, for every contact angle, and that the following
rigidity result for minimizing cones holds true: there exists a universal $\eps>0$ such that if 
\be\label{chodosh-main}
n=4, \quad\mbox{for }0<\theta<\eps\mbox{ or }0<\pi-\theta<\eps ;\qquad
n<7, \quad\mbox{for }\left|\theta-\frac{\pi}{2}\right|\leq \eps,
\ee
then $M$ is flat. We stress that the study for angles close $\frac{\pi}{2}$ 
was initiated by Li-Zhou-Zhu in \cite{minmax}.\\

The principle intention of this paper is to improve the estimates for the size of the singular set of the free boundary for minimizing capillary hypersurfaces, independently of the value of the corresponding contact angle. Additionally, we want to highlight the deep connection between the capillarity problem with free boundary and the one-phase Bernoulli free boundary problem, with a major attention to the notions of stability.

\subsection{Main results}\label{sub:intro-main} The main result of the present paper is the following.

\begin{theorem}\label{thm-main}
Let $M$ be a minimizing capillary cone with $\theta \in (0,\pi)$. Assume that 
\be\label{e:assumptionH}
	H_{\partial M}\mbox{ has constant sign in }\{x_{n+1}=0\},
\ee
then if $n=4$, the cone $M$ is flat. 
\end{theorem}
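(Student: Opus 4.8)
The plan is to carry out a stability argument in the spirit of Jerison--Savin, adapted to the capillarity boundary condition. Since $M=\partial E$ is a \emph{minimizing} capillary cone, $E$ is in particular a stable critical point of $\mathcal F_\theta$, so for every $\varphi\in C^\infty_c(\R^{n+1})$ one has the second variation inequality
\[
\int_{M}\Big(|\nabla_M\varphi|^2-|A_M|^2\,\varphi^2\Big)\,d\mathcal H^{n}\ \geq\ \int_{\partial M}\mathfrak q_\theta\,\varphi^2\,d\mathcal H^{n-1},
\]
where, since the wall $\{x_{n+1}=0\}$ is flat, the capillary Robin weight reduces to $\mathfrak q_\theta=\cot\theta\,A_M(\eta,\eta)$ (no second fundamental form of $\partial D$ appears). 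In dimension $n=4$ the cone $M$ is smooth, as a manifold with boundary, away from the vertex: its interior singular set has dimension $\leq n-7<0$ and is therefore empty, while $\mathrm{Sing}(\partial M)$ has dimension $\leq n-n^*(\theta)\leq 0$ (using $n^*(\theta)\geq 4$) and, being a cone, is contained in $\{0\}$. Hence all the integrations by parts below take place on the smooth manifold-with-boundary $M\setminus\{0\}$, with a logarithmic cut-off in $|x|$ absorbing the vertex and infinity, and the conclusion ``$M$ is flat'' will follow once we show $A_M\equiv0$ on $M\setminus\{0\}$.

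Next I would extract the boundary geometry forced by the constant contact angle. Differentiating $\langle\nu,e_{n+1}\rangle\equiv\cos\theta$ along $\partial M$, and using that $e_{n+1}$ is parallel, shows that the conormal $\eta$ is a principal direction of $A_M$ at every point of $\partial M$ and that $A_M$ restricted to $T\partial M$ is a fixed $(\pm\sin\theta)$--multiple of the second fundamental form of $\partial M$ inside $\{x_{n+1}=0\}$; combined with $H_M=0$ this yields
\[
A_M(\eta,\eta)\ =\ -\operatorname{tr}\big(A_M|_{T\partial M}\big)\ =\ \mp\,\sin\theta\;H_{\partial M}\qquad\text{on }\partial M,
\]
so that \eqref{e:assumptionH} is precisely the statement that $A_M(\eta,\eta)$, hence $\mathfrak q_\theta$, is sign-definite along $\partial M$. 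The Codazzi equations together with $A_M(\eta,\cdot)|_{T\partial M}=0$ then produce closed formulas for the normal derivatives $\partial_\eta\kappa_j$ of the principal curvatures along $\partial M$ in terms of $A_M(\eta,\eta)$, $|A_M|^2$ and the tangential curvatures; these control $F(\kappa_M)$ and $\partial_\eta F(\kappa_M)$ on $\partial M$ for any symmetric function $F$, and couple the boundary terms to \eqref{e:assumptionH}.

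With these preliminaries, the core step is to test the stability inequality with $\varphi=F(\kappa_M)\,\psi(|x|)$, where $\psi$ is a radial logarithmic cut-off and $F$ lies in the admissible class of convex, $1$--homogeneous, symmetric functions of the principal curvatures for which the paper's Simons-type inequality
\[
F(\kappa_M)\,\Delta_M F(\kappa_M)\ \geq\ -\,|A_M|^2\,F(\kappa_M)^2+c_n\,|\nabla_M F(\kappa_M)|^2,\qquad c_n>0,
\]
holds. Expanding $|\nabla_M(F\psi)|^2$, using that the radial direction is tangent to the cone and that $F(\kappa_M)=|x|^{-1}\widetilde F$ is $(-1)$--homogeneous, and integrating $\int_M\psi^2|\nabla_M F|^2$ by parts against the Simons inequality, all bulk cross terms cancel and one is left with
\[
\int_M F(\kappa_M)^2\,|\nabla_M\psi|^2\ +\ \int_{\partial M}\psi^2\Big(F(\kappa_M)\,\partial_\eta F(\kappa_M)-\mathfrak q_\theta\,F(\kappa_M)^2\Big)\,d\mathcal H^{n-1}\ \geq\ c_n\int_M\psi^2\,|\nabla_M F(\kappa_M)|^2 .
\]
Here the homogeneity is decisive and singles out $n=4$: the weight $F(\kappa_M)^2\sim|x|^{-2}$ on the $4$--dimensional cone gives a conformally $2$--dimensional, hence logarithmically negligible, capacity of $\{0\}$ and of infinity, so that $\int_M F(\kappa_M)^2|\nabla_M\psi|^2\to0$ as the cut-off degenerates. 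For the boundary term one uses the Codazzi normal-derivative formulas of the previous step together with the constant sign of $A_M(\eta,\eta)=\mp\sin\theta\,H_{\partial M}$ from \eqref{e:assumptionH}, and exploits the freedom in choosing $F$ within the convex class, to arrange $F(\kappa_M)\big(\partial_\eta F(\kappa_M)-\mathfrak q_\theta F(\kappa_M)\big)\leq 0$ on $\partial M$ (this is the boundary condition specific to the capillary setting). Passing to the limit in the cut-off then forces $\int_{\Sigma}\big(\widetilde F^2+|\nabla_\Sigma\widetilde F|^2\big)=0$ on the link $\Sigma=M\cap\mathbb S^{n}$, i.e. $F(\kappa_M)\equiv0$ on $M\setminus\{0\}$.

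Finally, since $F$ was chosen so as to control $|A_M|$ from below up to a dimensional constant (convexity and symmetry), $F(\kappa_M)\equiv0$ forces $A_M\equiv0$; hence $M\setminus\{0\}$ is totally geodesic and $M$ is flat. The step I expect to be the main obstacle is exactly the boundary analysis: one must establish, for a well-chosen $F$ in the convex symmetric class, the pointwise inequality $F(\kappa_M)\,\partial_\eta F(\kappa_M)\leq\mathfrak q_\theta\,F(\kappa_M)^2$ on $\partial M$ out of the Codazzi normal-derivative identities, the capillary weight $\mathfrak q_\theta=\cot\theta\,A_M(\eta,\eta)$, and the non-sign-changing hypothesis \eqref{e:assumptionH} (the value of $\cot\theta$ and the sign of $H_{\partial M}$ being the two parameters one must match). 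It is this interplay between the Simons-type inequality, the capillarity boundary condition, and \eqref{e:assumptionH} that both makes the scheme close and pins $n=4$ as the natural threshold.
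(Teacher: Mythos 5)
Your overall architecture matches the paper's: test stability with $\varphi = w\,\psi$ where $w$ is built from a convex, symmetric, one-homogeneous function of the principal curvatures, combine a Simons-type interior inequality with a boundary inequality coming from the capillary condition, and use the sign hypothesis \eqref{e:assumptionH} to control the boundary term. You also correctly identify that $A(\eta,\eta)$ is a $(\pm\sin\theta)$-multiple of $H_{\partial M}$ (this is \cref{l:H}) and that the strategy is Jerison--Savin. However, there is a genuine gap that you yourself flag as ``the main obstacle'', and it is not a loose end but the entire technical content of the theorem.

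The decisive issue is your choice of competitor. You take $\varphi=F(\kappa_M)\,\psi$ with $F$ one-homogeneous in the curvatures. With any $F$ in the admissible convex class, the boundary inequality you need, namely (in the paper's notation) $\cos\theta\,H_{\partial M}\,w^2 - \tfrac12\partial_\eta w^2\ge 0$, \emph{fails} for the homogeneity-one competitor $w=c$. For instance, for $n=4$, $a=4$, and the ``axially symmetric'' configuration $\lambda_1=0$, $\lambda_2=\lambda_3=\tfrac12$, $\lambda_4=-1$, \cref{l:generalized-simon-boundary} with $\alpha=1$ demands $-a\,H_{\partial M}|A|^2 - \tfrac{1}{\sin\theta}\bigl(\sum_i\lambda_i^3+a\sum_s\lambda_s^3\bigr)\ge 0$, which evaluates to $-\tfrac{9}{4\sin\theta}<0$. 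The paper avoids this by raising the quadratic $c=(\sum_{\lambda_i\ge0}\lambda_i^2+4\sum_{\lambda_s<0}\lambda_s^2)^{1/2}$ to the power $\alpha=1/3$, i.e. $w=c^{1/3}$; the exponent is forced by the boundary $L$-function, whose extremal values $L\in[\tfrac32,3]$ are computed by a one-variable optimization over $\xi=(\xi_2,\xi_3)$ with $\xi_2+\xi_3=1$, and $\alpha=1/3$ is the unique exponent (within the two-parameter family of weight $a$ and power $\alpha$, cf.~\cref{rmk:competitors}) compatible with both signs of $H_{\partial M}$ and with the interior gap $(\tfrac{n}{2}+k-1)^2$. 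With $k=-\alpha=-1/3$ the interior and gap constants both equal $4/9$ so the criterion is tight, which is why the paper must additionally show strictness of the interior Simons inequality in a neighborhood of the boundary points where $L=3$ is attained (the ``Lawson-type'' configuration $\lambda_2=-2\lambda_4$, $\lambda_3=\lambda_4$). None of this appears in your sketch.

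A secondary point: your ``logarithmic capacity of the vertex'' argument would dispense with the $\frac{c^2}{|x|^2}$ term in the Simons inequality, but the paper's \cref{p:JS} crucially uses a quantitative gap term $\frac{\Lambda}{|x|^2}w^2$ with $\Lambda\ge(\tfrac{n}2+k-1)^2$ and radial test functions $|x|^\alpha$, $|x|^\beta$; simply dropping the $|x|^{-2}$ term and relying on the $(-1)$-homogeneity plus a log cut-off does not recover the dimension restriction $n=4$, since the log cut-off argument alone is available in every dimension. It is the Hardy-type balance between the interior gap and the homogeneity of $w$ that pins $n=4$, and that balance depends on the exponent $\alpha=1/3$ you have not determined. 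In short, the framework is right, but the specific competitor $w=c^{1/3}$ with $a=4$, the boundary $L$-computation, and the strictness analysis are not filler; they are the proof.
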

By using the same argument, in \cref{s:cel} we give also an alternative proof of the non-existence of singular minimizing cones in dimension $n=3$ (precisely point (1) of \cite[Theorem 1.2]{cel}). We stress that in such case the assumption \eqref{e:assumptionH} is not required.

Regarding the upper bound of the critical dimension, it is natural to conjecture, in analogy with \cite{cjk, dj,js}, that \cref{thm-main} remains true for every contact angle in dimensions up to $6$. 
To support this hypothesis, we improve the previous result in the case of axial symmetries.

\begin{theorem}\label{thm-axially}
Let $M$ be a minimizing capillary cone with $\theta \in (0,\pi)$ whose free boundary $\partial M$ is axially symmetric. Then, if $n <7$ the cone $M$ is flat. 
\end{theorem}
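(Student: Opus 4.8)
The plan is to prove \cref{thm-axially} by showing that a \emph{non-flat} minimizing capillary cone whose free boundary is axially symmetric cannot be stable once $n\le6$; since a minimizer is in particular stable, this forces flatness. Throughout, ``axially symmetric'' means invariant under a copy of $G:=SO(n-1)$ acting by rotations of $\{x_{n+1}=0\}=\R^n$ fixing the $x_n$-axis. \emph{Step 1 (reduction to a rotationally symmetric cone).} For every $g\in G$ the rotated cone $gM$ is again a minimizing capillary cone with the \emph{same} free boundary $g(\partial M)=\partial M$; since $M$ and $gM$ are minimal and make the same contact angle along the same $\mathrm{Reg}(\partial M)$, they are tangent there and hence coincide by boundary and interior unique continuation (as in the one-phase case, cf.\ \cite{cjk}). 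Thus $M=C(\Sigma)$ with $\Sigma:=M\cap S^n$ a $G$-invariant minimal hypersurface-with-boundary of $S^n_+:=S^n\cap\{x_{n+1}\ge0\}$ meeting $\{x_{n+1}=0\}$ at the constant angle $\theta$, and $\partial\Sigma=\partial M\cap S^{n-1}$ is a round $(n-2)$-sphere. The shape operator of $\Sigma$ in $S^n$ then has $n-2$ equal eigenvalues $\lambda$ (orbit directions) and a single further eigenvalue $\mu$ (meridian direction), while $H_M\equiv0$ forces $\mu=-(n-2)\lambda$, so $|A_M|^2=(n-1)(n-2)\lambda^2/|x|^2$ and $H_{\partial M}$ has constant sign. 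In particular $n=4$ is already covered by \cref{thm-main}, so the new content is $n=5,6$.

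\emph{Step 2 (stability as a one-variable spectral inequality).} Minimality gives nonnegativity of the capillary index form
\[
Q_\theta(\varphi,\varphi)=\int_M\big(|\nabla_M\varphi|^2-|A_M|^2\varphi^2\big)\,d\HH^{n}-\int_{\partial M}q_\theta\,\varphi^2\,d\HH^{n-1}
\]
on compactly supported admissible variations, where — since $\partial D$ is flat and $H_M\equiv0$ — the boundary density $q_\theta$ is, up to the usual sign conventions, a $\theta$-dependent multiple of $H_{\partial M}$ (essentially $\cot\theta\,H_{\partial M}$); this is exactly why the constant-sign hypothesis on $H_{\partial M}$ will be decisive. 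Restricting to $G$-invariant $\varphi$ and separating the cone variable, $\varphi=|x|^{-(n-2)/2}\phi$, nonnegativity of $Q_\theta$ becomes, via the sharp one-dimensional Hardy inequality with weight $|x|^{n-1}$, the spectral inequality
\[
\lambda_1\!\Big({-}\Delta_\Sigma-|A_\Sigma|^2\text{ on }\Sigma,\ \text{Robin datum }q_\theta\text{ on }\partial\Sigma\Big)\ \ge\ -\Big(\tfrac{n-2}{2}\Big)^2 ,
\]
whose $G$-invariant eigenvalue problem reduces to an ODE along the meridian arc of $\Sigma$.

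\emph{Step 3 (the Simons-type inequality produces a negative direction).} Here I would invoke the paper's Simons-type inequality. Because $A_\Sigma$ has the two-eigenvalue structure of Step 1, the relevant convex, symmetric, $1$-homogeneous function $f$ of the principal curvatures is an explicit power of $\lambda$, and the Simons inequality for $f$ — sharper than the generic one precisely thanks to the rotational symmetry — together with the boundary condition (whose contribution has the favorable sign because $H_{\partial M}$ has constant sign) shows that $f(A_\Sigma)$, used as a test function in the spectral inequality above, realizes a value \emph{strictly} below $-\big(\tfrac{n-2}{2}\big)^2$ whenever $n\le6$ and $A_\Sigma\not\equiv0$; it is an admissible test function near the vertex and near the endpoints of the meridian by the homogeneity count $f(A_\Sigma)^2|x|^2\,|x|^{n-1}\sim|x|^{n-1}$. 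The resulting contradiction with stability forces $A_\Sigma\equiv0$, i.e.\ $\Sigma$ totally geodesic and $M$ flat.

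\emph{Main obstacle.} The difficulty is concentrated in Step 3 and is twofold. First, one must pin down $q_\theta$ and verify that, under the constant sign of $H_{\partial M}$, the boundary term generated by $f(A_\Sigma)$ has the sign needed to close the estimate for \emph{every} $\theta\in(0,\pi)$ — this is the genuinely capillary ingredient and the place where the contact angle enters. Second, reaching dimension $6$ rather than only $4$ rests on the improved Simons constant available for the two-eigenvalue configuration; concretely this is a one-variable estimate along the meridian, coupling the minimal-cone ODE for $\lambda$, the Simons inequality for $f$, and the Hardy constant $\big(\tfrac{n-2}{2}\big)^2$, which must be shown to be favorable for $n\le6$ and to fail at $n=7$, consistent with the expected sharp threshold (cf.\ \cite{cjk}). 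Justifying the symmetry propagation in Step 1 and the admissibility of $f(A_\Sigma)$ at the poles of $\Sigma$ are comparatively routine.
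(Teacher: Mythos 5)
Your outline identifies the correct tool set (a Simons-type inequality for a homogeneous function of the principal curvatures, the boundary mean-curvature term, and a Hardy-type criterion), but as written the proposal has two substantive problems, one inessential and one fatal.

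First, Step 1 both over-reduces and relies on an unjustified step. You try to upgrade the hypothesis ``$\partial M$ is axially symmetric'' to ``$M$ is a surface of revolution'' by an appeal to tangency along $\mathrm{Reg}(\partial M)$ plus ``boundary and interior unique continuation.'' Tangency to first order along a codimension-two set is not Cauchy data for the minimal surface equation, so ``$M$ and $gM$ coincide'' does not follow; and even the tangency statement needs a sign discussion to rule out the reflected contact configuration. Fortunately, none of this is needed: the paper's proof works only with the second fundamental form of $M$ \emph{restricted to the free boundary}, where (via \eqref{e:lambda} and \cref{l:H}) the tangential eigenvalues of $A$ are $\sin(\theta)\kappa_j$ and the normal one is $-\sin(\theta)H_{\partial M}$. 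Axial symmetry of $\partial M$ alone forces $\kappa_2=\dots=\kappa_{n-1}=H_{\partial M}/(n-2)$, which is the only geometric input used.

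Second — and this is the real gap — Step 3 is where the theorem lives and your proposal leaves it unproved. You acknowledge you ``would invoke the paper's Simons-type inequality'' with ``an explicit power of $\lambda$'' as competitor, and that one ``must show'' the resulting constant beats $\big(\tfrac{n-2}{2}\big)^2$ for $n\le 6$. That is precisely the computation the proof consists of, and it is not carried out. Concretely: the paper's competitor is simply $w=|A|^\alpha$ with $\alpha=(n-2)/(n-1)$ (no choice of a two-parameter convex $f$ is needed here, unlike in the $n=4$ theorem); the function $L$ from \eqref{e:L.alpha} evaluates to $(n-1)/(n-2)$ identically on $\partial M\cap S^{n-1}$, so \cref{l:boundary-L} gives the boundary inequality for any sign of $H_{\partial M}$; \cref{l:alpha} (the interior Simons inequality for powers of $|A|$) gives $\Lambda=\alpha(\alpha+1)=\tfrac{(n-2)(2n-3)}{(n-1)^2}$; and one must check
\[
\frac{(n-2)(2n-3)}{(n-1)^2}\ \ge\ \Big(\frac n2-\frac{n-2}{n-1}-1\Big)^2,
\]
which holds exactly for $2\le n\le 6$, strictly for $2<n<6$. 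Finally, there is a borderline subtlety at $n=6$ (equality above), which your sketch does not mention and which the paper resolves by noting the interior Simons inequality is strict near $\partial M$ because $(\nabla_M\log|A|\cdot\eta)^2>0$ there. Without these explicit verifications — choice of $\alpha$, computation of $L$, verification of \eqref{e:check}, and the $n=6$ strictness — the argument remains a plan rather than a proof.
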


Nevertheless, \cref{thm-main} finds an immediate application in the case of graphical cones, where we can prove that the mean curvature $H_{\partial M}$ has a constant sign. Hence, set $n^*_G(\theta)$ to be the smallest dimension at which minimizing capillary graphical cones exhibit isolated
singularities.

\begin{theorem}\label{thm-main-graphical-cone}
 Let $M$ be a minimizing capillary graphical cone with $\theta \in (0,\pi)$. Then if $n=4$, the cone $M$ is flat. Precisely, we have $$
 n^*_G(\theta) \geq 5,\qquad\mbox{for every }\theta \in (0,\pi).
 $$
 \end{theorem}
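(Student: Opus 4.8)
The plan is to deduce \cref{thm-main-graphical-cone} from \cref{thm-main}: the only point to verify is that a minimizing capillary graphical cone automatically satisfies the structural hypothesis \eqref{e:assumptionH}, i.e.\ that the mean curvature of its free boundary does not change sign. Concretely, write such a cone as $M=\partial E$ with $E=\{0\le x_{n+1}<u(x')\}$ for a non-negative $1$-homogeneous $u\colon\R^n\to\R$, and set $\Omega:=\{u>0\}\subset\R^n$. On $\Omega$ the regular part of $M$ in $\R^{n+1}_+$ is the graph of $u$; since there is no bulk term in $\mathcal{F}_\theta$, the surface $M$ is minimal, so $u$ solves the minimal surface equation $\mathrm{div}\big(\nabla u/\sqrt{1+|\nabla u|^2}\big)=0$ in $\Omega$, and the contact-angle condition \eqref{e:criticality-partialM} becomes $|\nabla u|=\beta$ on $\partial\Omega$, where $\beta=\beta(\theta):=\sqrt{(1-\cos^2\theta)/\cos^2\theta}$. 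Under this identification $\partial M=\partial\Omega\times\{0\}$ and $H_{\partial M}$ is the mean curvature of the hypersurface $\partial\Omega\subset\R^n$ with respect to the outer unit normal $\overline\nu$ of $\Omega$; I will show $H_{\partial M}\le 0$, and then \cref{thm-main} with $n=4$ gives flatness. The bound $n^*_G(\theta)\ge 5$ then follows, since the $n=3$ case is covered by the remark after \cref{thm-main} and the lower-dimensional cases are trivial.

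The first step is a \emph{global gradient bound}: $|\nabla u|\le\beta$ in $\Omega$. Differentiating the minimal surface equation, each partial derivative $\partial_k u$ solves one and the same uniformly elliptic equation in $\Omega$, so $|\nabla u|^2=\sum_k(\partial_k u)^2$ is a subsolution and obeys the maximum principle (equivalently, the angle function $\langle\nu,e_{n+1}\rangle=(1+|\nabla u|^2)^{-1/2}>0$, with $e_{n+1}$ the vertical unit vector, is a Jacobi field on the minimal graph, hence superharmonic there). Since these quantities are $0$-homogeneous, they are bounded by the known Lipschitz regularity of $u$ and determined by their values on the compact cross-section; combined with the $C^{1,\alpha}$ regularity at regular free boundary points, where $|\nabla u|=\beta$, and the removability of $\mathrm{Sing}(\partial M)$ (whose dimension is $\le n-n^*(\theta)\le n-4<n-2$, see \cite{cel}), this forces $|\nabla u|\le\beta$ throughout $\Omega$, with equality attained on $\partial\Omega$.

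The sign of $H_{\partial M}$ then comes out of computing $\Delta u$ along $\partial\Omega$ in two ways. On one hand, since $|\nabla u|^2$ attains its maximum $\beta^2$ on $\partial\Omega$ we have $\partial_{\overline\nu}(|\nabla u|^2)\ge 0$ there, and since $\nabla u=-\beta\,\overline\nu$ on $\partial\Omega$ this reads $-2\beta\,D^2u(\overline\nu,\overline\nu)\ge 0$, i.e.\ $D^2u(\overline\nu,\overline\nu)\le 0$ on $\mathrm{Reg}(\partial M)$. On the other hand, the minimal surface equation gives $\Delta u=(1+|\nabla u|^2)^{-1}D^2u(\nabla u,\nabla u)$, so on $\partial\Omega$ one has $\Delta u=\tfrac{\beta^2}{1+\beta^2}D^2u(\overline\nu,\overline\nu)$, while the Fermi-coordinate expansion of the Laplacian near $\partial\Omega$, using $u|_{\partial\Omega}=0$ and $\partial_{\overline\nu}u=-\beta$, gives $\Delta u=D^2u(\overline\nu,\overline\nu)-\beta\,H_{\partial M}$ on $\partial\Omega$. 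Equating the two expressions yields $H_{\partial M}=\dfrac{1}{\beta(1+\beta^2)}\,D^2u(\overline\nu,\overline\nu)\le 0$ on $\mathrm{Reg}(\partial M)$ (the precise value of the constant being immaterial), so $H_{\partial M}$ has constant sign; thus \eqref{e:assumptionH} holds and \cref{thm-main} applies.

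I expect the main obstacle to be the global gradient bound up to the free boundary. At regular free boundary points it is immediate from the $C^{1,\alpha}$ boundary regularity, but one must rule out concentration of $|\nabla u|$ along the singular part of $\partial\Omega$ and handle the non-compactness of the cone; these are dealt with, respectively, by a removable-singularity argument resting on $\dim\mathrm{Sing}(\partial M)\le n-4$ together with the Lipschitz bound for $u$, and by $0$-homogeneity. Everything else is a routine computation.
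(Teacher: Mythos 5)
Your proposal is correct and hinges on exactly the same idea as the paper's proof: a maximum principle showing that the tilt of the minimal graph is maximal on the free boundary, combined with the boundary condition, forces $H_{\partial M}$ to have constant sign, after which \cref{thm-main} applies. The difference is in the implementation. The paper works intrinsically with $v:=|\nabla_M x_{n+1}|$, computes the Jacobi-type identity $\tfrac12\Delta_M v^2 - |\nabla_M v|^2 + |A|^2 v^2 = |A|^2 - v^{-2}|Ae_{n+1}|^2 \geq 0$, restricts to the compact link $\Sigma = M\cap\partial B_1$ to run the maximum principle, and then reads off the sign of $H_{\partial M}$ from the clean geometric boundary identity $\tfrac12(\nabla_M v^2\cdot\eta)=\cos\theta\,\sin^2\theta\,H_{\partial M}$. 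You instead work extrinsically on $\Omega=\{u>0\}\subset\R^n$, observe that $|\nabla u|^2$ is a subsolution of the uniformly elliptic operator obtained by linearizing the minimal surface equation (which is the same fact, in graph coordinates, as $\nu\cdot e_{n+1}$ being a positive Jacobi field), and extract $H_{\partial M}$ by a Fermi-coordinate expansion of $\Delta u$ at $\partial\Omega$. The two are equivalent; the paper's version meshes more seamlessly with the intrinsic stability framework it has already set up, while yours is more elementary and self-contained. Two minor remarks: your sign $H_{\partial M}\le 0$ is opposite to the paper's ($\geq 0$ for $\theta\in(0,\pi/2)$, cf.\ \eqref{e:H} with $u_n=-\tan\theta$), which is only a mean-curvature orientation convention and immaterial since \eqref{e:assumptionH} requires only constant sign; and the removable-singularity concern you flag is essentially vacuous in the relevant case, since a minimizing graphical cone in $n=4$ has, by the known bound $n^*(\theta)\ge 4$ and homogeneity, an isolated singularity at the origin only, so the maximum principle is applied on a smooth compact link $\Sigma$ exactly as in the paper.
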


Since such configurations arise as blow-up limits in the analysis of minimizing capillary drop as in \cite{cf, cm}, the following improved estimate is a direct consequence of a Federer’s dimension reduction and the rigidity result for minimizing capillary graphical cones in low-dimension. 
\begin{corollary}\label{thm-main-graphical}
Let $\Omega\subset \R^{n}$ be bounded, $\sigma \in C^1(\Omega;(-1,1)), g\in C^1(\Omega)$ and $u \in H^1(\Omega)$ be a local minimizer of the functional
$$
\mathcal{J}(u):=\int_{\Omega\cap \{u>0\}} \sqrt{1+|\nabla u|^2} + gu  - \sigma \,dx .
$$
Then $\mathrm{dim}_{\mathcal{H}}(\mathrm{Sing}(\partial \{u>0\} ))\leq n-5$.  
\end{corollary}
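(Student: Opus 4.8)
The plan is to pass from the non-parametric minimizer $u$ to its subgraph, transfer the regularity machinery for minimizing capillary surfaces, and conclude by a Federer dimension reduction together with \cref{thm-main-graphical-cone}. First I would associate to $u$ the subgraph
\[
E_u:=\big\{(x,t)\in\Omega\times\R\ :\ 0\le t<u(x)\big\},
\]
with the hyperplane $\partial D=\{x_{n+1}=0\}$ as container wall, so that $M:=\partial E_u$ is graphical and its free boundary $\partial M$ is (the natural copy of) $\partial\{u>0\}$. Extending $g$ to be independent of $x_{n+1}$ and keeping $\sigma$ as the wetting datum on $\partial D$, a direct computation (Fubini for the gravitational term) gives $\mathcal{F}(E_u)=\mathcal{J}(u)$. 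Arguing as in \cite{cf,cm} — where the graphical structure is used to reduce arbitrary finite-perimeter competitors to subgraph competitors by truncation — one checks that a local minimizer $u$ of $\mathcal{J}$ produces a set $E_u$ that is a minimizer of $\mathcal{F}$ in the sense of the introduction; in particular $M$ is a minimizing capillary surface, and since minimizers of $\mathcal{J}$ are locally Lipschitz, $E_u$ has no vertical part and $M$ is genuinely graphical.

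Next I would analyze blow-ups at a free boundary point. Fix $\bar x\in\partial\{u>0\}$; since $u$ is Lipschitz with $u(\bar x)=0$, the rescalings $u_{\bar x,r}(y):=u(\bar x+ry)/r$ are uniformly Lipschitz, hence converge along subsequences (locally uniformly) to a $1$-homogeneous $u_0\ge 0$, and $E_{u_0}=\lim_{r\to0}\big(E_u-(\bar x,0)\big)/r$ in $L^1_{\loc}$. By the monotonicity formula and compactness for $\mathcal{F}$ — the term $g$ and the oscillation of $\sigma$ being lower order and scaling away, the $C^1$ regularity of $\sigma$ freezing the contact angle to some $\theta_0=\theta_0(\bar x)\in(0,\pi)$ with $\cos\theta_0=\sigma(\bar x)$ — the cone $M_0:=\partial E_{u_0}$ is a minimizing capillary cone for $\mathcal{F}_{\theta_0}$, and by construction it is graphical, i.e.\ a minimizing capillary graphical cone. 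The $\varepsilon$-regularity theorem for minimizing capillary surfaces then gives that $\bar x\in\mathrm{Reg}(\partial M)$ if and only if some blow-up $M_0$ is a half-hyperplane.

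It then remains to run the standard Federer dimension-reduction scheme for minimizing capillary surfaces — using that a blow-up of a graphical cone at one of its singular points is again a graphical cone of one lower dimension — which yields, locally near each $\bar x$, the bound $\dim_{\mathcal H}\big(\mathrm{Sing}(\partial M)\cap B_\rho(\bar x)\big)\le n-n^*_G(\theta_0(\bar x))$. Covering $\mathrm{Sing}(\partial M)$ by countably many such balls and invoking \cref{thm-main-graphical-cone}, which gives $n^*_G(\theta)\ge 5$ for every $\theta\in(0,\pi)$, we get $\dim_{\mathcal H}(\mathrm{Sing}(\partial M))\le n-5$; since $\mathrm{Sing}(\partial\{u>0\})$ is isometric to $\mathrm{Sing}(\partial M)\cap\{x_{n+1}=0\}$, this is exactly the claimed estimate.

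The main obstacle is the first two steps: making rigorous that a local minimizer of $\mathcal{J}$ falls within the regularity framework for minimizing capillary surfaces (minimality against all finite-perimeter competitors, monotonicity formula, $\varepsilon$-regularity), and — crucially — that graphicality survives the blow-up, so that \cref{thm-main-graphical-cone} applies (its hypothesis \eqref{e:assumptionH} being then automatic) rather than only the weaker \cref{thm-main}. By contrast, the point dependence $\theta_0=\theta_0(\bar x)$ of the contact angle is immaterial here, since the bound $n^*_G(\theta)\ge 5$ is uniform in $\theta$.
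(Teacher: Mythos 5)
Your proposal is correct and matches the paper's (very brief) treatment: the paper merely states that the corollary is a direct consequence of Federer's dimension reduction and \cref{thm-main-graphical-cone}, citing \cite{cf,cm} for the facts that the subgraph $E_u$ of a local minimizer of $\mathcal{J}$ is a minimizing capillary set whose blow-ups at free boundary points are graphical minimizing capillary cones. Your write-up simply fills in, at the appropriate level of detail, the steps the paper outsources to \cite{cf,cm} (Lipschitz regularity of $u$, passage to the subgraph, persistence of graphicality under blow-up, and the reduction to \cref{thm-main-graphical-cone} via $n^*_G(\theta)\ge5$).
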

We notice that, as it was shown in \cite{cf}, the sets of finite perimeter $E$ that minimize the functional \eqref{e:functional-macro} in the half-space $D=\{x_{n+1}>0\}$ are graphical, so \cref{thm-main-graphical} applies giving the smoothness of the free boundary in dimensions up to $4$. 
\subsection{Strategy and the role of  
\texorpdfstring{$H_{\partial M}$}{Lg}}
The proof of these results is inspired by the one of Jerison and Savin in \cite{js} for the rigidity of cones for the one-phase Bernoulli problem 
(see \cite{schoen-simon-yau,simon-lecture,simons} for the analogue for stable minimal cones).
The core idea is to construct a competitor $w$ involving the principal curvatures of $M$ which satisfies a
differential inequality for the linearized system
\be\label{e:linearized-system-intro}
\begin{aligned}
\displaystyle \frac12\Delta_M w^2 -|\nabla_M w|^2 + |A|^2 w^2 &=0 &&\quad\mbox{in }M,\\
\cos(\theta)H_{\partial M} w^2 -  \frac12(\nabla_M w^2\cdot \eta) &= 0 &&\quad\mbox{on }\partial M\setminus \{0\},
\end{aligned}
\ee
(see \cref{p:JS} for the precise inequality). In the spirit of Simons \cite{simons}, it is natural to consider the squared norm of the second fundamental form as a competitor in the stability inequality. While this choice allows us to rule out the presence of singularities in dimension $n=3$ and the existence of axially symmetric free boundaries in dimensions up to $6$, the general case requires a more refined construction (see \eqref{e:candidate4} for the competitor in dimension $n=4$). 

Heuristically, on the free boundary $\partial M$, the competitor $w$ must balance the mean curvature $H_{\partial M}$ and the principal curvatures with opposite sign against those with the same sign. Precisely, we have that $w:=F(A)^{1/3}$ where
\begin{enumerate}
    \item if $H_{\partial M}\geq 0$, we have
 $
\displaystyle F(A):=\sin(\theta)\Bigg(\sum_{\kappa_i\geq 0}\kappa_i^2 + 4 \sum_{\kappa_s<0}\kappa_s^2 + 4 H_{\partial M}^2\Bigg)^{1/2}$ on $\partial M\setminus \{0\}$,\\
    \item if $H_{\partial M}\leq 0$, we have
 $
\displaystyle
F(A):=\sin(\theta)\Bigg(\sum_{\kappa_i\geq 0}\kappa_i^2 +  4\sum_{\kappa_s<0}\kappa_s^2 +  H_{\partial M}^2\Bigg)^{1/2}$ on $\partial M\setminus \{0\}$,\\
\end{enumerate}
where $\kappa_j$ are the principle curvatures of $\partial M$. As highlighted in \cref{rmk:competitors}, these new competitors are optimal within the class of quadratic functions of the principal curvatures of $M$.\\

For general minimizing cones, the absence of graphicality prevents controlling the sign of the mean curvature $H_{\partial M}$, and therefore it is necessary to introduce the assumption \eqref{e:assumptionH} in \cref{thm-main}.  This feature is specific of the non-parametric setting in dimensions greater than $3$: indeed while in the graphical case it is known that singular cones have strictly positive mean curvature, for general cones it is not even known whether $H_{\partial M}$ must be nonzero.

Thus, our result completes the analysis of the capillary graphical surfaces in dimension $4$ started in \cite{cel}, where the authors derive the estimate on the singular set from the fact that capillary cones are graphical for small contact angles.
\subsection{Connection with the one-phase Bernoulli problem}
 Although the connection between the two problems has been well known in literature, it is only recently that several groups of authors have exploited it in the study of capillarity problems (see \cite{cel, dephi-fusco-morini, demasi-edelen-gasparetto-li}). Our contribution seeks to further emphasize this relationship by focusing on the link between Simons inequality \cite{simons} and Jerison-Savin inequality \cite{js}. Indeed:
 \begin{itemize}
 \item[(i)] in \cref{remark1}, we compare the notion of stability introduce by Caffarelli, Jerison and Kenig in \cite{cjk} with the one introduced in \cref{d:stable};
 \item[(ii)] in \cref{remark2}, we highlight how the second fundamental form $A$ plays the role of the hessian in \cite{js}.  
 \end{itemize}
In fact this connection persists even at the level of the second order expansion of the functionals. 

\subsection{Plan of the paper} In \cref{s:notations} we settle the set of coordinates that will be use through the paper and we introduce an alternative notion of stability for capillary hypersurfaces with free boundary (see \cref{d:stable}). We conclude the section with \cref{p:JS}, which provide a stability criterion \emph{à la} Jerison-Savin.

In \cref{s:A} we start by considering the squared norm of the second fundamental form of $M$ as a competitor in the stability condition. Ultimately, we prove \cref{thm-axially} and provide an alternative proof of $n^*(\theta)\geq 4$ (see point (1) of \cite[Theorem 1.2]{cel}) (see \cref{s:cel}).

Finally, in \cref{s:finally} we prove \cref{thm-main} and \cref{thm-main-graphical} by computing the interior and boundary inequalities for homogeneous, convex functions of the eigenvalues of the second fundamental form. In \cref{a:eigen} we collect few computations for functions of eigenvalues, for the sake of completeness.

\section{Stable capillary cones}\label{s:notations}
In this section we recall some facts about the stability of capillary cones. In particular, we introduce a system of coordinates that will be used through the paper. Ultimately, we derive a stability condition \emph{à la} Jerison-Savin (see \cite{js}).

\subsection{Set of coordinates}\label{s:coords} Let $M$ be a minimizing capillary cone with isolated singularity at the origin and $x_0 \in \partial M\setminus \{0\}$. By the symmetry of the functional, we can restrict our analysis to the case $\theta \in (0,\pi/2)$. Then, we start by choosing coordinate axes in such a way that
$$
\overline\nu=e_n\ ,\qquad \nu= \sin(\theta)e_n + \cos(\theta)e_{n+1},\qquad \eta=\cos(\theta)e_n - \sin(\theta)e_{n+1}\qquad\mbox{at }x_0,
$$
where $\theta \in (0,\pi/2)$ define the angle between $\nu$ and $e_{n+1}$ at regular free boundary point.\\
Assume that in a neighborhood $\mathcal{N}_{x_0}$ of $x_0$, the boundary $M$ is given by the graph of a non-negative  function $u\colon \R^{n}\to \R$, that is
\begin{align*}
E\cap \mathcal{N}_{x_0}&=\{x\in \R^{n+1}\colon 0\leq x_{n+1}<u(x_1,\dots,x_n)\},\\ M\cap \mathcal{N}_{x_0} &=\{x \in \R^{n+1}\colon x_{n+1}=u(x_1,\dots,x_n)\},
\end{align*}
and so $\partial M \cap \mathcal{N}_{x_0} = \{x \in \R^{n+1}\colon x_{n+1}=0,u(x_1,\dots,x_n)=0\}$. Then, by rephrasing the criticality conditions \eqref{e:criticality-M}-\eqref{e:criticality-partialM} in terms of the local parametrization $u$, we get that
\be\label{e:criticality-u}
\mathrm{div}\left(\frac{\nabla u}{\sqrt{1+|\nabla u|^2}}\right)=0 \quad \mbox{in }\pi(M\cap \mathcal{N}_{x_0}),\qquad
|\nabla u|=\tan(\theta)\quad \mbox{on } \partial M \cap \mathcal{N}_{x_0},
\ee
where $\pi\colon \R^{n+1}\to \{x_{n+1}=0\}$ is the orthogonal projection on $\{x_{n+1}=0\}$. Similarly, being $\partial M\cap \mathcal{N}_{x_0}$ the graph of a regular function $\varphi \colon \R^{n-1}\to \R$ such that $\nabla^2 \varphi(x_0)$ is diagonal, differentiate the condition
$$
u(x',\varphi(x'))=0 \qquad \mbox{at } \varphi^{-1}(x_0 \cdot e_{n+1}),
$$
with respect to tangential directions, yields the following identities at $x_0 \in \partial M\setminus \{0\}$: 
\be\label{e:diffe0}
u_i =0\quad\mbox{for }i<n,\qquad u_n=-\tan(\theta),\qquad u_{ij}=u_{ijn}=0\quad\mbox{for } i\neq j,\mbox{ with }i,j<n.
\ee
By differentiating the second equation in \eqref{e:criticality-u}, we deduce that for every $i<n$ it holds
$$
u_{in}=0 \quad\mbox{at }x_0,
$$
that is $\nabla^2 u(x_0)$ is diagonal. Ultimately, by differentiating again both equations in \eqref{e:criticality-u}, we get for every $i<n$ that
\begin{align}\label{e:diffe1}
\begin{aligned}
\sum_{j<n}u_{jj}&=-\frac{u_{nn}}{(1+u_n^2)},\qquad \sum_{j<n}u_{ijj}=-\frac{u_{inn}}{(1+u_n^2)},\\[0.3em]
u_{iin} &= \frac{1}{u_n}u_{nn}u_{ii} -\frac{1}{u_n}u_{ii}^2,\\[0.5em]
u_{nnn} &= \frac{1+u_{n}^2}{u_n}\sum_{j<n} u_{jj}^2 +\frac{1+3u_{n}^2}{u_n(1+u_{n}^2)}u_{nn}^2 \quad\mbox{at }x_0.
\end{aligned}
\end{align}
Finally, in a neighborhood of regular free boundary point $x_0\in \partial M\cap \mathcal{N}_{x_0}$, we can rewrite the second fundamental form of $M$ in terms of the local parametrization $u$.

More precisely, in this coordinate system, the first fundamental form of $M$ takes the form
\be \label{e:tensor-g}
g := \mathrm{Id} + \nabla u \otimes \nabla u, \qquad g^{-1} = \mathrm{Id} - \frac{\nabla u \otimes \nabla u}{1+|\nabla u|^2}\qquad\mbox{in }\pi(M\cap \mathcal{N}_{x_0})
\ee
and so, at $x_0$
we can express the second fundamental form of $M$ as follows:
\begin{align*}
A:= &\,\frac{g^{-1}\nabla^2u}{(1+|\nabla u|^2)^{1/2}} = \frac{1}{(1+|\nabla u|^2)^{1/2}}\left(\nabla^2 u - \frac12\frac{\nabla u \otimes \nabla^2 u \nabla u}{1+|\nabla u|^2}- \frac12\frac{\nabla^2 u \nabla u \otimes \nabla u}{1+|\nabla u|^2}\right),\\[0.5em]
|A|^2 = &\,\frac{1}{1+|\nabla u|^2}\left(|\nabla^2 u|^2 + \frac{|\nabla u|^2 |\nabla^2 u\nabla u|^2}{(1+|\nabla u|^2)^2} - 2\frac{|\nabla^2 u \nabla u|^2}{1+|\nabla u|^2} \right).
\end{align*}
We stress that $A$ is defined as the second fundamental form of $M$ induced by the outward normal $\nu$. Moreover, in this coordinate system, it holds
\be\label{e:lambda}
\lambda_j(A)=\frac{u_{jj}}{(1+u_n^2)^{1/2}}\quad\mbox{for }j<n,\qquad \lambda_n(A)=\frac{u_{nn}}{(1+u_n^2)^{3/2}} \qquad\mbox{at }x_0.
\ee

The following lemma provides a different formulation of the free boundary term in the stability condition associated to the functional \eqref{e:functional-blow}. Notice that if the capillary surface $M$ is graphical in the $e_{n+1}$-direction, the following lemma is the analog of \cite[Remark 1]{cjk} for the one-phase Bernoulli problem.
\begin{lemma}\label{l:H}
Let $E$ be a stationary point of $\mathcal{F}$ and $M:=\partial E$. Assume that the origin is an isolated singularity of $\partial M$, then
$$
\cot(\theta)(A\eta\cdot\eta) = -\cos(\theta) H_{\partial M}
\qquad\mbox{at }\partial M\setminus \{0\},
$$
where $H_{\partial M}$ is the mean curvature of $\partial M$ pointing towards the complement of $\partial E \cap \{x_{n+1}=0\}$ in $\{x_{n+1}=0\}.$ Ultimately, we infer that 
$$
\lambda_n(A)=-\sin(\theta) H_{\partial M}\quad \mbox{at }\partial M\setminus \{0\}.
$$
\end{lemma}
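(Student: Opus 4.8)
The plan is to work entirely at a fixed regular free boundary point $x_0 \in \partial M \setminus \{0\}$ in the adapted coordinate system set up above, where $\overline\nu = e_n$, $\eta = \cos(\theta)e_n - \sin(\theta)e_{n+1}$, and $M$ is locally the graph of $u$ with $\nabla u(x_0) = (0,\dots,0,-\tan(\theta))$, $\nabla^2 u(x_0)$ diagonal. First I would express $A\eta\cdot\eta$ in terms of the components of the second fundamental form. Since $\eta$ is tangent to $M$ and, in the chosen frame, lies in the $e_n$–$e_{n+1}$ plane, its pullback to the parameter space $\{x_{n+1}=0\}$ is a multiple of $e_n$; concretely, using $g = \mathrm{Id} + \nabla u\otimes\nabla u$ one checks that $A\eta\cdot\eta$ equals $\lambda_n(A)$ up to the normalization coming from $|\eta|=1$, i.e.\ directly from \eqref{e:lambda} one gets $A\eta\cdot\eta = \lambda_n(A) = u_{nn}/(1+u_n^2)^{3/2} = u_{nn}\cos^3(\theta)$, since $1+u_n^2 = 1+\tan^2(\theta) = \sec^2(\theta)$.

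Next I would compute $H_{\partial M}$, the mean curvature of the free boundary curve/hypersurface $\partial M$ inside $\{x_{n+1}=0\}$. Here $\partial M$ is the zero level set of $u$ restricted to $\{x_{n+1}=0\}$, parametrized as the graph of $\varphi\colon\R^{n-1}\to\R$ with $u(x',\varphi(x'))=0$ and $\nabla^2\varphi(x_0)$ diagonal. Differentiating the defining relation twice and using \eqref{e:diffe0}–\eqref{e:diffe1} to eliminate the third derivatives, the tangential second derivatives of $\varphi$ are expressed through $u_{ii}$, $u_n$; summing over $i<n$ and dividing by the appropriate metric factor gives $H_{\partial M}$ in terms of $\sum_{j<n}u_{jj}$ and $u_n$. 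The key algebraic input is the trace identity $\sum_{j<n}u_{jj} = -u_{nn}/(1+u_n^2)$ from \eqref{e:diffe1}, which is precisely where the PDE $\operatorname{div}(\nabla u/\sqrt{1+|\nabla u|^2})=0$ enters. Combining, one finds $H_{\partial M}$ is proportional to $u_{nn}$ with an explicit trigonometric factor, and matching constants yields $\cot(\theta)(A\eta\cdot\eta) = -\cos(\theta)H_{\partial M}$; the final assertion $\lambda_n(A) = -\sin(\theta)H_{\partial M}$ then follows by multiplying by $\tan(\theta)$ and using $A\eta\cdot\eta = \lambda_n(A)$.

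The main obstacle is bookkeeping: one must be scrupulous about (a) the orientation conventions — $\eta$ points out of $\overline D$, $A$ is induced by the outward normal $\nu$, and $H_{\partial M}$ points towards the complement of $\partial E\cap\{x_{n+1}=0\}$ — since a single sign error propagates through; and (b) correctly normalizing $\eta$ and the induced metric on $\partial M$ when reading off $H_{\partial M}$ (the curve/surface $\partial M$ sits in $\{x_{n+1}=0\}$ with the flat metric, so only the graph-of-$\varphi$ metric factor $(1+|\nabla\varphi|^2)$ intervenes, and at $x_0$ one must verify $\nabla\varphi(x_0)$ in terms of $u_i$). Once the frame in Section \ref{s:coords} is used verbatim and \eqref{e:diffe0}–\eqref{e:diffe1} are substituted, the computation is essentially forced; I would present it as a short direct calculation rather than invoking any abstract structure equation.
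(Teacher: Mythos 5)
Your proposal is correct and follows the same route as the paper: fix $x_0\in\partial M\setminus\{0\}$ in the adapted frame of \cref{s:coords}, read off $A\eta\cdot\eta=\lambda_n(A)=u_{nn}/(1+u_n^2)^{3/2}$, differentiate $u(x',\varphi(x'))=0$ twice to get $u_n\varphi_{ii}+u_{ii}=0$ (so $H_{\partial M}=\sum_{i<n}\varphi_{ii}=-\frac{1}{u_n}\sum_{i<n}u_{ii}$, the metric factor $1+|\nabla\varphi|^2$ being trivial since $\varphi_i(x_0)=0$), and convert $\sum_{i<n}u_{ii}$ to $u_{nn}$ via the trace identity in \eqref{e:diffe1} before substituting $u_n=-\tan\theta$. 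The only small imprecision is the remark about "eliminating the third derivatives''; no third derivatives appear in this lemma — only \eqref{e:diffe0} and the first line of \eqref{e:diffe1} are needed.
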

\begin{proof}
Let $x_0 \in \partial  M\setminus \{0\}$ and $\mathcal{N}_{x_0}$ as before. Then, being $\partial M\cap \mathcal{N}_{x_0}$ the graph of a regular function $\varphi \colon \R^{n-1}\to \R$, by differentiating twice the identity $u(x',\varphi(x'))=0$ at $x_0$ we get that 
$$
u_n\varphi_{ii} + u_{ii}=0 \quad \mbox{at }x_0,
$$
and so
\be\label{e:H}
 H_{\partial M} = \sum_{i<n} \varphi_{ii} = -\frac{1}{u_n}\sum_{i<n}u_{ii} = \frac{1}{u_n(1+u_n^2)}u_{nn}\quad\mbox{at }x_0.
\ee
where in the last equality we use \eqref{e:diffe1}. Then, since
$$
(A\eta\cdot\eta) =  \frac{u_{nn}}{(1+u_n^2)^{3/2}}\qquad\mbox{ at }x_0,
$$
the claim follows by substituting the second equation in \eqref{e:diffe0}. The identity for $\lambda_n(A)$ follows by direct substitution. 
\end{proof}
By combining \cref{l:H} with the stability condition associated to $\mathcal{F}$ (see \cite[Section 1]{rossouam} for the derivation), we can give the following alternative definition of \emph{stable capillary surfaces}.
\begin{definition}\label{d:stable}
Let $E\subset \R^{n+1}$, then $M:=\partial E$ is said to be a \emph{stable capillary surface} with isolated singularity at the origin if, given $\sigma=\cos(\theta)$, we have
$$
H_M=0 \quad\mbox{on }M\cap \{x_{n+1}>0\},\qquad (\nu\cdot e_{n+1})=\cos(\theta) \quad\mbox{on }\partial M\setminus \{0\},
$$
and for every functions $\varphi\in C^\infty_c(\R^{n+1}\setminus\{0\})$ it holds
   \begin{equation}\label{e:stability2}
    \int_M |\nabla_M\varphi|^2-|A|^2\varphi^2\,d\mathcal{H}^n\geq \cos(\theta)\int_{\partial M} H_{\partial M}\varphi^2\,d\mathcal{H}^{n-1}.
\end{equation}
Clearly, $M$ is said to be a \emph{stable capillary cone} if $M$ is also homogeneous. 
\end{definition}
\begin{remark}[The Caffarelli-Jerison-Kenig stability inequality]\label{remark1}
By re-writing the boundary term of the stability inequality in terms of the mean curvature of the free boundary $\partial M$, we can deepen in to the connection with the one-phase Bernoulli free boundary problem. Indeed, in \cite{cjk} Caffarelli-Jerison-Kenig proved that given $\Lambda>0$, a stable solution $u\colon \R^n \to \R$ of the one-phase problem with isolated singularities at the origin, satisfies 
$$
\Delta u=0 \qquad \mbox{in }\{u>0\},\qquad |\nabla u|=\Lambda\quad \mbox{on }\partial\{u>0\}\setminus \{0\},
$$
and for every $\varphi \in C^\infty_c(\R^n \setminus \{0\})$ it holds  
$$
\int_{\{u>0\}}|\nabla \varphi|^2 \,dx \geq \int_{\partial\{u>0\}}H_{\partial\{u>0\}}\varphi^2 \,d\mathcal{H}^{n-1}.
$$
It is worth noting that the only significant difference between the notions of stability in the two problems lies in the sensitivity of \eqref{e:stability2} with respect to the contact angle.
\end{remark}
\subsection{Stability criterion \emph{à la} Jerison-Savin}
Stability conditions are the primer tool to prove rigidity results for PDEs. Precisely, in the same spirit of \cite{js, schoen-simon-yau}, in \cref{p:JS} we exploit \eqref{e:stability2} to prove a non-existence result for non-trivial strict subsolution to the linearized equation of $\mathcal{F}$ in singular cones.
\begin{prop}\label{p:JS} 
Let $M:=\partial E$ be a stable capillary cone with an isolated singularity at the origin. Suppose that for some $k\in \R$, there exists a $k$-homogeneous non-negative function $w \in L^\infty_\loc(\R^{n+1}\setminus\{0\})\cap C^\infty(\{w>0\}\setminus\{0\})$ satisfying the following differential inequalities in the distributional sense:
\be\label{e:assumption-js}
\begin{aligned}
\displaystyle \frac12\Delta_M w^2 -|\nabla_M w|^2 + |A|^2 w^2 &\geq \frac{\Lambda}{|x|^2}w^2 &&\quad\mbox{in }M,\\
\cos(\theta)H_{\partial M} w^2 -  \frac12(\nabla_M w^2\cdot \eta) &\geq 0 &&\quad\mbox{on }\partial M\setminus \{0\};
\end{aligned}
\ee
with 
\be\label{e:check}
\Lambda \geq \left(\frac{n}{2}+k-1\right)^2.\ee
Then, if one of the three inequalities above is strict,
we get $w\equiv 0$.
\end{prop}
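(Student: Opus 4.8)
The plan is to test the stability inequality \eqref{e:stability2} against a suitably truncated and regularized version of $w$, exploit the two differential inequalities in \eqref{e:assumption-js} to absorb the curvature term, and then play off the homogeneity of $w$ against the dimensional condition \eqref{e:check} via a Hardy-type inequality, concluding that no nontrivial $w$ can exist. Concretely, I would first fix a cutoff $\zeta \in C^\infty_c(\R^{n+1}\setminus\{0\})$ and use $\varphi = \zeta w$ as a competitor (after a standard approximation argument to deal with the fact that $w$ is only Lipschitz/locally bounded and smooth where positive; one replaces $w$ by $\sqrt{w^2 + \delta^2 \eta_\rho^2}$ type regularizations, or works on $\{w > 0\}$ and checks the measure-zero set $\{w=0\}$ contributes nothing, using $w \in L^\infty_{\loc}$). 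Expanding $|\nabla_M(\zeta w)|^2 = w^2|\nabla_M \zeta|^2 + \zeta^2|\nabla_M w|^2 + \tfrac12 \nabla_M\zeta^2 \cdot \nabla_M w^2$ and integrating by parts the cross term on $M$, the boundary contributions along $\partial M$ appear and are exactly of the form $\tfrac12(\nabla_M w^2 \cdot \eta)\zeta^2$; the first inequality in \eqref{e:assumption-js} lets me rewrite $\int \zeta^2(|\nabla_M w|^2 - |A|^2 w^2)$ in terms of $\int \zeta^2(\tfrac12\Delta_M w^2 - \tfrac{\Lambda}{|x|^2}w^2)$ plus an error, and the second inequality in \eqref{e:assumption-js} together with the boundary term of \eqref{e:stability2} (which carries exactly $\cos(\theta)H_{\partial M}w^2$) makes the $\partial M$-contributions combine with the right sign and drop out.

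After this bookkeeping one should arrive, for every admissible $\zeta$, at an inequality of the shape
\be\label{e:plan-key}
\Lambda \int_M \frac{\zeta^2 w^2}{|x|^2}\,d\mathcal{H}^n \;\le\; \int_M w^2 |\nabla_M \zeta|^2 \, d\mathcal{H}^n .
\ee
Now I would choose $\zeta$ to be radial, $\zeta = \zeta(|x|)$, which by the $k$-homogeneity of $w$ on the $n$-homogeneous cone $M$ reduces \eqref{e:plan-key} to a one-dimensional weighted Hardy inequality on the link $\Sigma = M \cap \mathbb{S}^n$: writing $x = r\omega$, one has $w = r^k \bar w(\omega)$, $d\mathcal{H}^n = r^{n-1}\,dr\,d\mathcal{H}^{n-1}(\omega)$, and $|\nabla_M\zeta|^2 = \zeta'(r)^2$, so \eqref{e:plan-key} becomes $\Lambda \int_0^\infty r^{n-3+2k}\zeta^2\,dr \le \int_0^\infty r^{n-1+2k}\zeta'^2\,dr$ (up to the harmless factor $\int_\Sigma \bar w^2$, which is positive unless $w\equiv 0$). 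The sharp constant in this radial Hardy inequality is precisely $\left(\tfrac{n-2+2k}{2}\right)^2 = \left(\tfrac n2 + k - 1\right)^2$, and it is \emph{not attained} by any compactly supported $\zeta$. Hence \eqref{e:check} forces all the inequalities used to be equalities, which is impossible if one of the three inequalities in the hypotheses is strict — and if $\int_\Sigma \bar w^2 > 0$ the strict Hardy inequality already gives a contradiction with \eqref{e:plan-key}. Either way $w \equiv 0$.

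The main obstacle I expect is the justification of using $\varphi = \zeta w$ as a test function in \eqref{e:stability2} and of the integration by parts producing the boundary terms, given that $w$ is merely $L^\infty_{\loc}$ and only smooth on $\{w>0\}\setminus\{0\}$: one must argue that the singular set $\{w=0\}\cup\{0\}$ is negligible for all the integrals, that the distributional inequalities in \eqref{e:assumption-js} can be integrated against the nonnegative weight $\zeta^2$ (so one really needs them in the distributional sense against test functions that need not vanish on $\partial M$, which matches the statement), and that no boundary concentration occurs near the origin — here the local boundedness of $w$, the homogeneity, and the smallness of the singular set of $M$ itself ($\dim_{\mathcal H}\mathrm{Sing}(M)\le n-7$) are what make the cutoff argument near $0$ work. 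A secondary technical point is tracking the precise sign and coefficient of the $\partial M$ term so that it cancels against $\cos(\theta)H_{\partial M}w^2$ rather than reinforcing it; this is exactly where \cref{l:H} and the specific structure of \eqref{e:stability2} enter, and it is the reason the boundary inequality in \eqref{e:assumption-js} is written in that particular form.
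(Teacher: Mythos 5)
Your proposal follows the same route as the paper: test the stability inequality~\eqref{e:stability2} with $\varphi=w\phi$, integrate by parts so that the hypothesized interior and boundary differential inequalities in~\eqref{e:assumption-js} absorb the curvature and $\partial M$ terms, and then reduce, via the $k$-homogeneity of $w$ on the cone, to a radial Hardy inequality whose sharp constant is $\left(\tfrac n2+k-1\right)^2$; the paper implements the last step with the explicit two-power cutoffs $\phi=|x|^\alpha$ ($|x|\le1$), $\phi=|x|^\beta$ ($|x|>1$) with $\beta<1-k-\tfrac n2<\alpha$ rather than citing the sharp constant abstractly, but the mechanism is identical. One small slip in your last sentence: the strict one-dimensional Hardy inequality points in the \emph{same} direction as~\eqref{e:plan-key}, so it does not ``already'' contradict it for a fixed cutoff; the contradiction comes, exactly as in the paper, from pushing the cutoff toward the non-attained Hardy extremizer $r^{-(\frac n2+k-1)}$ (i.e.\ $\alpha\downarrow -(\tfrac n2+k-1)$, $\beta\uparrow -(\tfrac n2+k-1)$) while the extra positive term contributed by the strict hypothesis stays bounded away from zero.
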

\begin{proof}
We just sketch the proof since it resembles the one of \cite[Section 3]{schoen-simon-yau} and \cite{js} (see also \cite[Appendix B]{simon-lecture} and \cite{simons}). Suppose by contradiction that $w$ is non-trivial and consider the test function $\varphi:=w \phi$ with $\phi \in C^2_c(\R^{n+1}\setminus \{0\})$ to be chosen later. By integrating the stability inequality \eqref{e:stability2} by parts with this choice of test function, we obtain
\begin{align*}
\int_M w^2|\nabla_M \phi|^2 - \bigg(\frac12\Delta_M w^2 -|\nabla_M w|^2 &+ |A|^2 w^2\bigg)\phi^2  \,d\mathcal{H}^{n}\\
&\geq \int_{\partial M}  \left(\cos(\theta) H_{\partial M} w^2-\frac12(\nabla_M w^2\cdot \phi)\right)\phi^2\,d\mathcal{H}^{n-1}.
\end{align*}
Then, by \eqref{e:assumption-js} and \eqref{e:check}, we get that for every $\phi \in C^2_c(\R^{n+1}\setminus \{0\})$
$$
\int_M w^2 |\nabla_M \phi|^2\,d\mathcal{H}^{n} > \left(\frac{n}{2}+k-1\right)^2\int_M  \frac{w^2}{|x|^2}\phi^2\,d\mathcal{H}^{n}, 
$$
where the inequality is strict since one of the inequalities in \eqref{e:assumption-js} and \eqref{e:check} is strict by hypothesis.\\
Now, being $M$ a cone, we have $\nu \cdot x =0$ on $M$ and we can restrict our attention to radially symmetric test functions of the form
$$
\phi(x):=\begin{cases}
|x|^\alpha &\mbox{if }|x|\leq 1,\\
|x|^\beta &\mbox{if }|x|>1,
\end{cases}\qquad\mbox{with}\quad\beta<1-k-\frac{n}{2}<\alpha.
$$
Therefore, we get
$$
0 < \left(\alpha^2 -\left(\frac{n}{2}+k-1\right)^2\right)\int_{M\cap B_1} \frac{w^2}{|x|^{2(1-\alpha)}}\,d\mathcal{H}^{n} + \left(\beta^2 -\left(\frac{n}{2}+k-1\right)^2\right)\int_{M\setminus B_1}  \frac{w^2}{|x|^{2(1-\beta)}}\,d\mathcal{H}^{n}
$$
Finally, by taking $\alpha$ and $\beta$ such that 
$$
|\alpha|<\frac{n}{2}+k-1, \quad |\beta|<\frac{n}{2}+k-1, 
$$
we infer that the strict inequality implies that $w\equiv 0$ in $\R^{n+1}$.
\end{proof}
\section{The second fundamental form \texorpdfstring{$|A|$}{Lg}}\label{s:A}
In this section we start by showing that powers of the squared-norm of the second fundamental $A$ are admissible competitors for the stability criterion. 
Ultimately, we  derive an alternative proof of the first point of  \cite[Theorem 1.1]{cel} (i.e. $n^*(\theta)\geq 3$ for every contact angle) and we address the case of axially symmetric cone  (i.e. \cref{thm-axially}).

\subsection{Interior inequality}
We proceed by showing powers $w:=|A|^\alpha$ satisfy a Simons-type inequality in cones $M$. Since it appears, to the best of our knowledge, that this specific inequality is new for the case of minimal surfaces, we sketch the proof for the sake of completeness (see also \cite{schoen-simon-yau} for similar computations). 
\begin{lemma}\label{l:alpha}
Let $E$ be a homogeneous stationary point of $\mathcal{F}$ and $M:=\partial E$. Then, given $\alpha \in (0,1)$, the function $w:=|A|^\alpha$ satisfies the following inequalities in a distributional sense:
$$
\frac12\Delta_M w^2 - |\nabla_M w|^2 + |A|^2 w^2 \geq \left( 
2\frac{n-2}{n-1}\frac{\alpha}{|x|^2} +\alpha\left(\alpha-1+\frac{2}{n-1}\right)\frac{|\nabla_M |A||^2}{|A|^2}\right)
w^2
\quad\mbox{on }M,
$$
and 
$$
\frac12\Delta_M w^2 - |\nabla_M w|^2 + |A|^2 w^2 \geq  \alpha(\alpha+1)\frac{w^2}{|x|^2}\quad\mbox{on }M,\qquad \mbox{if }\alpha\geq 1-\frac{2}{n-1}.
$$
\end{lemma}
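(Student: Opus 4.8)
The plan is to isolate a Simons-type inequality for $|A|$ adapted to minimal \emph{cones}, and then transport it through the nonlinearity $t\mapsto t^\alpha$ by a routine chain-rule computation.

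\textbf{Step 1: a Simons inequality on the minimal cone $M$.} Since $E$ is a homogeneous stationary point, $M$ is a cone and is minimal in its interior, so there Simons' identity reads $\tfrac12\Delta_M|A|^2=|\nabla_M A|^2-|A|^4$. I would exploit the conical structure in two ways. First, writing $M=\{t\,\omega:\ t>0,\ \omega\in\Sigma\}$ over its link $\Sigma^{n-1}=M\cap S^n$ — a hypersurface of $S^n$, minimal because $M$ is — one has $|A|(t\omega)=t^{-1}|A_\Sigma|(\omega)$, where $A_\Sigma$ is the second fundamental form of $\Sigma$ in $S^n$; since $x$ is tangent to $M$ and $|A|$ is $(-1)$–homogeneous, $\tfrac{x}{|x|}\cdot\nabla_M|A|=-|A|/|x|$, hence
\be\label{aux:radial}
|\nabla_M|A||^2\;\ge\;\frac{|A|^2}{|x|^2}\qquad\text{on }M.
\ee
Second, I would split $\nabla_M A$ into its radial part and its part tangent to $\Sigma$: the radial part equals $-|x|^{-1}A$, while the tangential part is, up to the $t$–rescaling, $\nabla^\Sigma A_\Sigma$, which — being totally symmetric and trace-free — satisfies the refined Kato inequality of Schoen--Simon--Yau \cite{schoen-simon-yau} on $\Sigma$. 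Tracking the contributions (by the Codazzi symmetry of $\nabla_M A$ there are three copies of the radial part, each adding $|A|^2/|x|^2$) yields the cone-improved estimate
\be\label{aux:kato}
|\nabla_M A|^2\;\ge\;\Big(1+\tfrac{2}{n-1}\Big)|\nabla_M|A||^2+\frac{2(n-2)}{n-1}\,\frac{|A|^2}{|x|^2}\qquad\text{on }M,
\ee
and then, combining with Simons' identity and $|A|\,\Delta_M|A|=\tfrac12\Delta_M|A|^2-|\nabla_M|A||^2$,
\be\label{aux:simons-phi}
|A|\,\Delta_M|A|\;\ge\;\frac{2}{n-1}\,|\nabla_M|A||^2-|A|^4+\frac{2(n-2)}{n-1}\,\frac{|A|^2}{|x|^2}\qquad\text{on }\{|A|>0\}.
\ee
This cone-improved Simons inequality is the crux, and Step 1 is where essentially all the work lies; the remaining two steps are mechanical. (One may assume $n\ge3$, since minimal cones in dimension at most $2$ are flat.)

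\textbf{Step 2: the chain rule.} Fix $\alpha\in(0,1)$ and put $\phi:=|A|$, $w:=\phi^\alpha$. From $\tfrac12\Delta_M w^2-|\nabla_M w|^2+|A|^2w^2=w\big(\Delta_M w+|A|^2w\big)$ a direct computation on $\{|A|>0\}$ gives
$$\tfrac12\Delta_M w^2-|\nabla_M w|^2+|A|^2w^2=\alpha\,\phi^{2\alpha-2}\big(\phi\,\Delta_M\phi+(\alpha-1)|\nabla_M\phi|^2\big)+\phi^{2\alpha+2}.$$
Substituting \eqref{aux:simons-phi} (and using $\alpha>0$) bounds the right-hand side below by
$$\alpha\Big(\alpha-1+\tfrac{2}{n-1}\Big)\frac{|\nabla_M|A||^2}{|A|^2}\,w^2+(1-\alpha)|A|^2w^2+2\,\frac{n-2}{n-1}\,\frac{\alpha}{|x|^2}\,w^2.$$

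\textbf{Step 3: conclusion.} For the first inequality, since $\alpha\le1$ the middle term $(1-\alpha)|A|^2w^2$ is nonnegative and may be discarded, which is exactly the claim. For the second, suppose $\alpha\ge1-\tfrac{2}{n-1}$, so that $\alpha-1+\tfrac{2}{n-1}\ge0$; then use \eqref{aux:radial} to bound $\tfrac{|\nabla_M|A||^2}{|A|^2}\ge|x|^{-2}$ in the first term, discard $(1-\alpha)|A|^2w^2\ge0$ again, and add the coefficients of $w^2/|x|^2$:
$$\alpha\Big(\alpha-1+\tfrac{2}{n-1}\Big)+2\,\frac{n-2}{n-1}\,\alpha=\alpha(\alpha+1),$$
which is the second claim. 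Finally, everything above is performed on the open set $\{|A|>0\}\subset\mathrm{Reg}(M)$, where $w$ is smooth; the passage to the distributional inequalities on $M$ is the standard argument (replace $|A|^2$ by $|A|^2+\varepsilon$, repeat, and let $\varepsilon\to0^+$), and the singular set $\mathrm{Sing}(M)$, of codimension at least $7$, is removable for this purpose.
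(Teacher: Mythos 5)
Your proof is correct and follows essentially the same approach as the paper's: you derive the cone-improved Simons estimate, push it through the chain rule for $t\mapsto t^\alpha$, and then either drop the $(1-\alpha)|A|^2$ term (first claim) or additionally use radial homogeneity $|\nabla_M|A||^2\ge|A|^2/|x|^2$ when $\alpha\ge1-\tfrac{2}{n-1}$ (second claim). The only cosmetic difference is in Step 1: the paper simply cites its later \cref{l:generalized-simon} for the cone-improved inequality $\tfrac12\Delta_M c^2-|\nabla_M c|^2+|A|^2c^2\ge\tfrac{2}{n-1}|\nabla_M c|^2+\tfrac{2(n-2)}{n-1}\tfrac{c^2}{|x|^2}$ (which, via Simons' identity, is exactly your $\eqref{aux:kato}$), whereas you re-derive it for $c=|A|$ by the Schoen--Simon--Yau refined Kato argument on the cone; both routes give the identical estimate and the paper itself points to \cite{schoen-simon-yau} for this. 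Your regularization remark at the end matches the paper's $w_\eps=(c^2+\eps)^{\alpha/2}$ device.
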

\begin{proof}
Set $c:=|A|$, then by adapting the Simons inequality (see \cref{l:generalized-simon} for a more general derivation),
we have
$$
\frac12\Delta_M c^2 - |\nabla_M c|^2 + |A|^2 c^2 \geq 
\frac{2}{n-1}|\nabla_M c|^2 + 2\frac{n-2}{n-1}\frac{c^2}{|x|^2} \qquad\mbox{on }M.
$$
Now, for $\eps>0$ we set $w_\eps := \left( c^2 + \eps\right)^{\alpha/2}$ with $\alpha \in (0,1)$, and
we get that on $M$
\begin{align*}
    \frac12 \Delta_M w_\eps^2 \,-\, &\,|\nabla_M w_\eps|^2 + |A|^2 w_\eps^2\\ 
    \geq & \, \left[\left(1-\alpha \frac{c^2}{c^2+\eps}\right)|A|^2 + \frac{n-2}{n-1}\frac{2\alpha}{|x|^2}\left(\frac{c^2}{c^2+\eps}\right) 
     + \alpha \frac{|\nabla_M c|^2}{c^2+\eps}\left(\frac{n+1}{n-1}-(2-\alpha)\frac{c^2}{c^2+\eps}\right)\right]w_\eps^2.
\end{align*}
Then, if we set $w:=|A|^\alpha$, by letting $\eps \to 0^+$,  
we get that
\begin{align*}
&\frac12\Delta_M w^2 -|\nabla_M w|^2 + |A|^2 w^2\\ 
& \qquad\qquad\geq \left((1-\alpha) |A|^2 + 
2\frac{n-2}{n-1}\frac{\alpha}{|x|^2} +\alpha\left(\alpha-1+\frac{2}{n-1}\right)\frac{|\nabla_M c|^2}{c^2}\right)w^2
\qquad\mbox{on }M,
\end{align*}
in a distributional sense.
While the first inequality follows from the fact that $\alpha<1$, the second inequality holds because, if $M$ is a cone, the function $c$ is homogeneous and 
$$
|\nabla_M c|^2 \geq \frac{1}{|x|^2}c^2\quad\mbox{on }M.
$$
\end{proof}
\begin{remark}
We emphasize that throughout the paper, all differential equations can be rigorously derived by regularizing the competitors near their nodal set. This type of approximation is well known in the literature, including in the case of Simons identity (see also the derivation in \cite{schoen-simon-yau}), and is therefore often omitted.
\end{remark}
\subsection{Boundary inequality}
We proceed by computing the normal derivative of the squared-norm of the second fundamental form on the free boundary $\partial M$. Although in \cite[Lemma C.2]{minmax} the authors present the same boundary condition, we provide a different proof here for the sake of clarity (see \cref{l:generalized-simon-boundary} for a further generalization). 
\begin{lemma}\label{l:boundary}
Let $E$ be an homogeneous stationary point of $\mathcal{F}$ and $M:=\partial E$. Assume that $\partial M$ has an isolated singularity at the origin, then
    \be\label{e:boundary-eq}
    \frac12 (\nabla_M |A|^2\cdot\eta) = 2\cos(\theta)H_{\partial M}|A|^2 +\cot(\theta)\sum_{j=1}^n \lambda_j^3(A)
\quad\mbox{on }\partial M \setminus \{0\},
    \ee
    where $\lambda_j(A)$, with $j=1,\dots,n$, are the eigenvalues of the second fundamental form $A$.
\end{lemma}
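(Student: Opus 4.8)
I would work at a fixed regular free boundary point $x_0\in\partial M\setminus\{0\}$ in the coordinate system of \cref{s:coords}, where $M$ is the graph of a function $u$ near $x_0$, $\nabla u(x_0)=-\tan\theta\,e_n$, $\nabla^2u(x_0)$ is diagonal, and the third derivatives of $u$ at $x_0$ satisfy the differentiated criticality identities \eqref{e:diffe0}--\eqref{e:diffe1}. The first step is to turn the conormal derivative into an ordinary partial derivative in the chart. Since $\nabla_M f=\nabla f-(\nabla f\cdot\nu)\nu$ for $f$ a function on $M$ and the projection of $e_n$ onto $T_{x_0}M$ is $\cos^2\theta\,e_n-\sin\theta\cos\theta\,e_{n+1}$, whose inner product with $\eta=\cos\theta\,e_n-\sin\theta\,e_{n+1}$ equals $\cos\theta$, one gets
\be
\nabla_M|A|^2\cdot\eta=\cos\theta\,\partial_n\big(|A|^2\big)\qquad\text{at }x_0,
\ee
where $|A|^2$ is regarded as a function on $\pi(M\cap\mathcal N_{x_0})$ via the graph map. (Equivalently this follows from $g^{-1}=\mathrm{Id}-\frac{\nabla u\otimes\nabla u}{1+|\nabla u|^2}$ and $\tau_n\cdot\eta=\sec\theta$.) Hence \eqref{e:boundary-eq} reduces to computing $\partial_n|A|^2$ at $x_0$ from the explicit expression for $|A|^2$ in terms of $u,\nabla u,\nabla^2u$.

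\textbf{The computation.} Write $|A|^2=E(\nabla u,\nabla^2u)$ with $E(p,S)=\frac{1}{P}\big(|S|^2+\frac{|p|^2|Sp|^2}{P^2}-\frac{2|Sp|^2}{P}\big)$, $P=1+|p|^2$, so that the chain rule gives $\partial_n|A|^2=\sum_k\partial_{p_k}E\,u_{kn}+\sum_{i,j}\partial_{S_{ij}}E\,u_{ijn}$, all evaluated at $(\nabla u(x_0),\nabla^2u(x_0))$. At $x_0$ the evaluation collapses: $\nabla^2u$ is diagonal and $\nabla u=-\tan\theta\,e_n$, so $Sp=u_nu_{nn}\,e_n$; therefore $u_{kn}=0$ for $k<n$, the factors $\partial_{S_{ij}}E$ vanish for $i\neq j$ except $(i,j)=(n,n)$ (which kills the mixed third derivatives $u_{ijn}$, $i\neq j$, including $u_{inn}$ with $i<n$), and only $\partial_{p_n}E\cdot u_{nn}$, $\partial_{S_{ii}}E\cdot u_{iin}$ $(i<n)$ and $\partial_{S_{nn}}E\cdot u_{nnn}$ survive. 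Into these I substitute, from \eqref{e:diffe1}, $u_{iin}=\tfrac1{u_n}u_{nn}u_{ii}-\tfrac1{u_n}u_{ii}^2$ and $u_{nnn}=\tfrac{1+u_n^2}{u_n}\sum_{j<n}u_{jj}^2+\tfrac{1+3u_n^2}{u_n(1+u_n^2)}u_{nn}^2$.

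\textbf{Simplification.} I would then simplify using $1+u_n^2=\sec^2\theta$, $u_n=-\tan\theta$, the relations $\lambda_i=\cos\theta\,u_{ii}$ $(i<n)$, $\lambda_n=\cos^3\theta\,u_{nn}$, $|A|^2=\sum_{i<n}\lambda_i^2+\lambda_n^2$, $\sum_{i<n}u_{ii}=-u_{nn}/(1+u_n^2)$, and $H_{\partial M}=\tfrac{u_{nn}}{u_n(1+u_n^2)}$ from \eqref{e:H}. The terms cubic in the second derivatives — the $u_{ii}^2$ part of $u_{iin}$, the $\sum u_{jj}^2$ part of $u_{nnn}$, together with the pieces produced by $\partial_{p_n}E$ and by the differentiation of the powers of $P$ — should collect exactly into $\tfrac{2}{\sin\theta}\sum_{j=1}^n\lambda_j^3$, while the terms quadratic in the second derivatives but proportional to $u_{nn}$ should collect, after using $\sum_{i<n}u_{ii}=-u_{nn}/(1+u_n^2)$, into $4H_{\partial M}|A|^2$. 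Multiplying by the prefactor $\cos\theta/2$ yields $\tfrac12(\nabla_M|A|^2\cdot\eta)=2\cos\theta\,H_{\partial M}|A|^2+\cot\theta\sum_j\lambda_j^3$ at $x_0$; since $x_0$ was an arbitrary regular free boundary point, this holds on all of $\partial M\setminus\{0\}$, with the distributional/regularization caveat already recorded after \cref{l:alpha}.

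\textbf{Main obstacle.} The difficulty is entirely bookkeeping in the last step: because the three summands of $E$ carry different powers of $P$ and $\partial_nP=2u_nu_{nn}\neq0$ at $x_0$, \emph{every} term — including those that look lower order — feeds into $\partial_n|A|^2$, and the precise coefficients ($4$ in front of $H_{\partial M}|A|^2$, and $\cot\theta$ in front of $\sum\lambda_j^3$ after the $\cos\theta/2$ prefactor) only come out correctly once the specific form of $u_{nnn}$ in \eqref{e:diffe1} — which encodes $H_M=0$ differentiated twice — is inserted. Keeping the powers of $\cos\theta$ (equivalently of $1+u_n^2$) and the signs straight is where errors are most likely; a useful cross-check is to specialize to points where $H_{\partial M}=0$ and to compare with \cite[Lemma C.2]{minmax}.
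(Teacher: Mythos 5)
Your proposal is correct and follows essentially the same route as the paper: fix a regular free boundary point $x_0$, work in the graph coordinates of \cref{s:coords}, compute $\partial_n|A|^2$ using the third-derivative identities \eqref{e:diffe0}--\eqref{e:diffe1}, and convert $\partial_n$ to $\nabla_M\cdot\eta$ via the metric (the paper does this implicitly "by exploiting \eqref{e:tensor-g}", you make the factor $\cos\theta$ explicit). Your chain-rule organization of the differentiation, exploiting that all off-diagonal $\partial_{S_{ij}}E$ and all $\partial_{p_k}E$ with $k<n$ vanish at $x_0$, is a cleaner bookkeeping of the same brute-force expansion the paper carries out.
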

\begin{proof}
For the sake of simplicity, all the estimates and identities in this subsection are valid at a fixed point $x_0\in \partial M\setminus \{0\}$. Therefore,  as in \cref{s:coords}, fixed $x_0$ we can consider $M\cap \mathcal{N}_{x_0}$ as the graph of a regular function $u\colon \R^n \to \R$. Then, by direct differentiation, we get 
\begin{align*}
\frac12 \partial_n |A|^2 =& -\frac{u_{nn}u_n}{(1+u_n^2)^2}\left(\sum_{i<n} u_{ii}^2 + \frac{u_{nn}^2}{(1+u_n^2)^2}\right)\\
& +\frac{1}{1+u_n^2}\left(\sum_{i<n}u_{ii}u_{iin} + u_{nn}u_{nnn} + \frac{u_n^3 u_{nn}^3}{(1+u_n^2)^2} +
 \frac{u_n^4 u_{nn} u_{nnn} + u_n^3 u_{nn}^3}{(1+u_n^2)^2} - \frac{2 u_n^5 u_{nn}^3 }{(1+u_n^2)^3}\right)\\
 & - \frac{1}{1+u_n^2}\left(\frac{2 u_{nn}u_n^2 u_{nnn} + 2u_{nn}^3 u_n}{1+u_n^2}-\frac{2u_n^3 u_{nn}^3}{(1+u_n^2)^2}\right)\\
 =& - \frac{u_{nn}u_n}{(1+u_n^2)^2}\left(\sum_{i<n}u_{ii}^2 - \frac{1+u_n^2}{u_{nn}u_n}\sum_{i<n}u_{ii}u_{iin}\right) - \frac{3 u_n u_{nn}^3}{(1+u_n^2)^4} + \frac{u_{nn}u_{nnn}}{(1+u_n^2)^3}.
\end{align*}
Notice that
\begin{align*}
\left(\sum_{i<n} u_{ii}^2 -\frac{1+u_n^2}{u_n u_{nn}}\sum_{i<n}u_{ii}u_{iin} \right)&=
\left(\sum_{i<n} u_{ii}^2 - \frac{1+u_n^2}{u_n^2}\sum_{i<n} u_{ii}^2 + \frac{1+u_n^2}{u_n^2 u_{nn}} \sum_{i<n}u_{ii}^3\right)\\
&= -\frac{1}{u_n^2}\sum_{i<n}u_{ii}^2 + \frac{1+u_n^2}{u_n^2 u_{nn}}\sum_{i<n}u_{ii}^3,\\
\frac{u_{nn}u_{nnn}}{(1+u_n^2)^3} &= \frac{1}{u_n(1+u_{n}^2)^2}u_{nn}\sum_{i<n} u_{ii}^2 +\frac{1+3u_{n}^2}{u_n(1+u_{n}^2)^4}u_{nn}^3.
\end{align*}
If we sum up the previous terms, we get
$$
\frac12 \partial_n |A|^2 = \frac{2 u_{nn}}{u_n (1+u_n^2)^{2}}\sum_{i<n}u_{ii}^2 + \frac{1}{u_n(1+u_n^2)^4}u_{nn}^3 - \frac{1}{u_n(1+u_n^2)}\sum_{i<n}u_{ii}^3,
$$
and since
\begin{align}\label{e:varie}
\begin{aligned}
\frac{1}{u_n}(1+u_n^2)^{1/2}\sum_{j=1}^n \lambda_j(A)^3 &= -\frac{1}{u_n(1+u_n^2)^{4}} u_{nn}^3 -\frac{1}{u_n(1+u_n^2)}\sum_{i<n} u_{ii}^3,\\
-\frac{2}{u_n}(1+u_n^2)^{1/2}|A|^2 (A\eta\cdot\eta) &= 2\frac{u_{nn}^3}{u_n(1+u_n^2)^4} + \frac{2u_{nn}}{u_n(1+u_n^2)^2}\sum_{i<n}u_{ii}^2,
\end{aligned}
\end{align}
it follows, by substituting \eqref{e:diffe0} and exploiting \eqref{e:tensor-g}, that
\begin{align*}
\frac12 (\nabla_M |A|^2\cdot \eta) &= \cot(\theta)\left(-2 |A|^2 (A\eta\cdot\eta) + \sum_{j=1}^n \lambda_j^3(A) \right)\qquad\mbox{on }\partial M\setminus \{0\}.
\end{align*}
Finally, \eqref{e:boundary-eq} follows by applying \cref{l:H}.
\end{proof}
The following is a direct consequence of the previous lemma.
\begin{lemma}\label{l:boundary-L}
Let $E$ be an homogeneous stationary cone of $\mathcal{F}$,  $M:=\partial E$ be its boundary with an isolated singularity at the origin. Consider $\alpha \in (0,1)$ so that  
\be\label{e:L.alpha3}
H_{\partial M}|A|^2 (1-2\alpha) - \alpha \frac{1}{\sin(\theta)}\sum_{j=1}^n \lambda_j^3(A) \geq  0\qquad\mbox{on }\partial M\setminus \{0\}.
\ee
Then $w:=|A|^\alpha$ satisfies the boundary inequality in \eqref{e:assumption-js} in a distributional sense.
Moreover, if $H_{\partial M} |A| \neq 0$ on $\partial M\setminus \{0\}$, by defining 
\be\label{e:L.alpha}
L(x):= 
2 + \frac{1}{\sin(\theta) H_{\partial M} |A|^2}\sum_{j=1}^n \lambda_j^3(A),
\ee
we can rewrite the condition \eqref{e:L.alpha3} as 
\be\label{e:L.alpha2}
H_{\partial M}(1-\alpha L)\geq 0\qquad\mbox{on }\partial M\setminus \{0\}.
\ee
\end{lemma}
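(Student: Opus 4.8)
The plan is to substitute the candidate $w:=|A|^\alpha$ directly into the boundary inequality appearing in \eqref{e:assumption-js} and reduce everything to \cref{l:boundary}. At a point of $\partial M\setminus\{0\}$ where $|A|\neq 0$ the function $w$ is smooth, so by the chain rule
\[
\tfrac12(\nabla_M w^2\cdot\eta)=\alpha\,|A|^{2\alpha-2}\,\tfrac12(\nabla_M|A|^2\cdot\eta),
\]
and I would plug in the identity of \cref{l:boundary}, namely $\tfrac12(\nabla_M|A|^2\cdot\eta)=2\cos(\theta)H_{\partial M}|A|^2+\cot(\theta)\sum_{j}\lambda_j^3(A)$. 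A short rearrangement (using $\cot(\theta)=\cos(\theta)/\sin(\theta)$ and factoring out $|A|^{2\alpha-2}$) then gives
\[
\cos(\theta)H_{\partial M}w^2-\tfrac12(\nabla_M w^2\cdot\eta)=|A|^{2\alpha-2}\cos(\theta)\Big[(1-2\alpha)H_{\partial M}|A|^2-\tfrac{\alpha}{\sin(\theta)}\sum_{j}\lambda_j^3(A)\Big].
\]
Since $\theta\in(0,\pi/2)$ is fixed as in \cref{s:coords}, we have $\cos(\theta)>0$ and $|A|^{2\alpha-2}>0$, so this quantity is nonnegative exactly under the assumption \eqref{e:L.alpha3}; this proves the boundary inequality in \eqref{e:assumption-js} at every point of $\partial M\setminus\{0\}$ with $|A|\neq 0$.

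To obtain the statement in the distributional sense, in particular across the nodal set $\{|A|=0\}$, I would invoke the regularization already used after \cref{l:alpha}: replace $w$ by $w_\eps:=(|A|^2+\eps)^{\alpha/2}$, repeat the computation, and observe that the only new contribution is a term $\eps\cos(\theta)H_{\partial M}(|A|^2+\eps)^{\alpha-1}$, which is bounded (on compact subsets of $\partial M\setminus\{0\}$) by a multiple of $\eps^{\alpha}$ since $\alpha\in(0,1)$, and hence vanishes as $\eps\to0^+$. This yields the first assertion.

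For the reformulation \eqref{e:L.alpha2}, assume $H_{\partial M}|A|\neq 0$ on $\partial M\setminus\{0\}$. From the definition \eqref{e:L.alpha} of $L$ one has $\sum_{j}\lambda_j^3(A)=(L-2)\sin(\theta)H_{\partial M}|A|^2$; substituting this into \eqref{e:L.alpha3} and dividing by $|A|^2>0$ transforms the left-hand side into $H_{\partial M}\big[(1-2\alpha)-\alpha(L-2)\big]=H_{\partial M}(1-\alpha L)$, which is precisely \eqref{e:L.alpha2}.

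The computation is elementary once \cref{l:boundary} is available, so the only point requiring care is the behavior at the nodal set $\{|A|=0\}$, which is dealt with by the regularization above; hence I expect no essential obstacle.
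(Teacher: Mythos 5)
Your proof is correct and follows essentially the same route as the paper: both substitute the identity of \cref{l:boundary} into the boundary term, factor out $\cos(\theta)w^2/|A|^2=\cos(\theta)|A|^{2\alpha-2}$, and pass through the regularization $w_\eps=(|A|^2+\eps)^{\alpha/2}$ to obtain the distributional statement. Your explicit identification and bound on the extra regularization term $\eps\cos(\theta)H_{\partial M}(|A|^2+\eps)^{\alpha-1}\le C\eps^{\alpha}$ is a small added detail not spelled out in the paper, and the algebra reducing \eqref{e:L.alpha3} to \eqref{e:L.alpha2} via $\sum_j\lambda_j^3=(L-2)\sin(\theta)H_{\partial M}|A|^2$ matches exactly.
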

\begin{proof}
Let $\eps>0$ and set $c:=|A|, w_\eps:=(c^2+\eps)^{\alpha/2}$. Then, by \cref{l:boundary}, we have that on $\partial M\setminus \{0\}$
\begin{align*}
\cos(\theta) H_{\partial M}w_\eps^2-\frac12(\nabla_M w^2_\eps\cdot\eta) &= w_\eps^2 \left[\cos(\theta) H_{\partial M}- \frac{\alpha}{2} \frac{(\nabla_M c^2\cdot \eta)}{c^2+\eps}\right]\\
& = w_\eps^2 H_{\partial M}\cos(\theta)\left(1-\frac{2\alpha c^2}{c^2+\eps}\right) - \cot(\theta)\alpha\frac{w_\eps^2}{c^2+\eps}\sum_{j=1}^n \lambda_j^3(A)
\end{align*}
Then, by letting $\eps \to 0^+$, that the following identity holds in a distributional sense:
$$
\cos(\theta) H_{\partial M}w^2-\frac12(\nabla_M w^2\cdot\eta) 
= \cos(\theta)\frac{w^2}{|A|^2}\left[H_{\partial M}|A|^2 (1-2\alpha) - \frac{\alpha}{\sin(\theta)}\sum_{j=1}^n \lambda_j^3(A) \right] \quad \mbox{on }\partial M\setminus \{0\}.
$$
Finally, by choosing $\alpha \in (0,1)$ so that \eqref{e:L.alpha3}, we deduce that the last right-hand side is non-negative on $\partial M$. On the other hand, \eqref{e:L.alpha2} follows by collecting $H_{\partial M}|A|^2$.
\end{proof}
\subsection{Axially symmetric free boundary} 
By assuming that all the principal curvatures of the free boundary are equal, we finally address the case of axially symmetries. Precisely, we deduce that, in dimensions up to $6$, stability implies that free boundary are flat. 
\begin{proof}[Proof of \cref{thm-axially}]
Let $x_0 \in \partial M \setminus \{0\}$ and consider the set of coordinates introduce in \cref{s:coords}. Then, the principle curvatures $\kappa_j$ of $\partial M$ and $H_{\partial M}$ can be defined at $x_0$ as 
$$
\lambda_j = \sin(\theta)\kappa_j\quad\mbox{for }j=2,\dots,n-1,\qquad\mbox{while }\quad \lambda_n=-\sin(\theta) H_{\partial M}.
$$
(clearly, if $e_1$ is the radial direction we have $\kappa_1=0$). Therefore, since all the principal curvatures are equal and their sum coincides with $H_{\partial M}$, it follows that
$$
\frac{1}{n-2} = \frac{\kappa_j}{H_{\partial M}}= -\frac{\lambda_j}{\lambda_n}\qquad\mbox{for every }j=2,\dots,n-1.
$$
By \eqref{e:L.alpha}, we first get $L\equiv (n-1)/(n-2)$ on $\partial M\cap S^{n-1}$ and so we have two possibilities:
\begin{enumerate}
    \item[(\rm i)] $H_{\partial M}\geq 0$, and we can take  $\alpha \leq (n-2)/(n-1)$,
    \item[(\rm ii)] $H_{\partial M}<0,$ and we can take $\alpha \geq (n-2)/(n-1)$.
\end{enumerate}
Hence, by choosing $\alpha := (n-2)/(n-1)$, by \cref{l:alpha} and \cref{l:boundary-L}, we get that $w:=|A|^\alpha$ satisfies in a distributional sense
\be\label{e:m}
\begin{aligned}
\displaystyle
\frac12\Delta_M w^2 -|\nabla_M w|^2 + |A|^2 w^2 &\geq \frac{(n-2)(2n-3)}{(n-1)^2}\frac{w^2}{|x|^2} &&\qquad\mbox{on }M,\\ \cos(\theta)H_{\partial M} w^2 -  \frac12(\nabla_M w^2\cdot \eta) &\geq 0 &&\qquad\mbox{on }\partial M\setminus \{0\};
\end{aligned}
\ee
In this case, the condition \eqref{e:check} can be rephrased as 
$$
\frac{(n-2)(2n-3)}{(n-1)^2}\geq  \left(\frac{n}{2}-\frac{n-2}{n-1}-1\right)^2 \quad\longleftrightarrow\quad 2\leq n\leq 6
$$
where the first inequality is strict for $2<n<6$. Therefore, if $n\in \{3,4,5\}$ we can directly apply \cref{p:JS} and infer that $w\equiv 0$, i.e. $M$ is flat.\\ 
Nevertheless, if $n=6$, we can show that the first inequality in \eqref{e:m} is strict, that is 
$$
\frac12\Delta_M w^2- |\nabla_M w|^2 + |A|^2 w^2 \geq \Lambda \frac{w^2}{|x|^2} \quad\mbox{on }M, \qquad \mbox{with }\Lambda > \frac{36}{25},
$$
which implies, that condition \eqref{e:check} is strictly satisfied. Indeed, since
$$
\alpha = \frac45 > 1- \frac{2}{n-1} =\frac35,
$$
by \cref{l:alpha} we have
\begin{align*}
\frac12\Delta_M w^2 - |\nabla_M w|^2 +|A|^2 w^2 
&\geq \frac{32}{25}\frac{w^2}{|x|^2}  + \frac{4}{25}\frac{|\nabla_M |A||^2}{|A|^2} w^2\\
&\geq \frac{36}{25}\frac{w^2}{|x|^2}  + \frac{4}{25}\frac{(\nabla_M |A|\cdot \xi)^2}{|A|^2} w^2\qquad\mbox{on }M,
\end{align*}
where $\xi \in S^{n-1}$ and such that $\xi \cdot x=0$ on $M$. By \cref{l:boundary}, since
$$(\nabla_M(\log |A|)\cdot \eta)^2 > 0\qquad\mbox{on }\partial M,$$ by taking $\xi = \eta$ we deduce that the first inequality in \eqref{e:m} is strict in a neighborhood of $\partial M$. We stress that the inequalities above are understood in a distributional sense.
\end{proof}

\subsection{An alternative proof of $n^*(\theta)\geq 4$ }\label{s:cel}
We conclude the section by providing an alternative proof of $n^*(\theta)\geq 4$ (see point (1) of \cite[Theorem 1.2]{cel}). 
Thus, let $n=3$, then at every point $x_0 \in \partial M \setminus \{0\}$, we choose the coordinates at in such a way that $x_0=x_0^1e_1$ where $e_1$ is the radial direction. Thus, we have $\lambda_1 =0$ and so, being $\mathrm{tr}(A)=0$, we infer that 
$$
\lambda_1(A)=0,\quad\lambda_2(A) = -\lambda_3(A)\qquad\mbox{at }x_0.
$$
Therefore $\partial M$ is axially symmetric, and the result is a direct consequence of \cref{thm-axially}.

\section{The general case}\label{s:finally}

\subsection{Generalization of the Simons inequality} By following the strategy in \cite[Section 4]{js}, we prove that the Simons inequality is satisfied by any convex, symmetric and one-homogeneous function of the eigenvalues of the second fundamental form of $M$ (see \cref{d:symmetric} for a precise notion of symmetric function).
\begin{lemma}\label{l:generalized-simon}
    Let $M$ be a minimizing cone, $A$ be its second fundamental form and $f\in C^{1,1}(\R^n; \R)$ be non-negative, symmetric, convex and homogeneous of degree one. Then, if we set $$c:=F(A)=f(\lambda_1(A),\dots,\lambda_n(A))$$ we have 
    \be\label{e:primo}
\frac12\Delta_M c^2 - |\nabla_M c|^2 + |A|^2 c^2 \geq 
\frac{2}{n-1}|\nabla_M c|^2 + 2\frac{n-2}{n-1}\frac{c^2}{|x|^2} \qquad\mbox{on }M.
    \ee
    Moreover, if $f$ is strictly convex and $c$ is non-constant, the previous inequality becomes strict.
\end{lemma}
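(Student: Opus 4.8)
The plan is to adapt the argument of Jerison--Savin \cite[Section~4]{js} --- where the same statement is proved for convex symmetric functions of the Hessian $D^2u$ in the one-phase Bernoulli problem --- replacing $D^2u$ with the second fundamental form $A$ and the harmonicity of $u$ with Simons' identity. The backbone is provided by two classical facts. First, a minimizing cone is a \emph{minimal} cone in $\R^{n+1}_+$, so on its regular part $\Delta_M h_{ij}=-|A|^2 h_{ij}$ (Simons, see \cite{simons,simon-lecture}), where $h_{ij}$ are the components of $A$ in a local orthonormal frame; moreover the Codazzi equations make $\nabla_M A=(h_{ijk})$ a fully symmetric $3$-tensor, and differentiating $\mathrm{tr}(A)=0$ gives $\sum_k h_{kkj}=0$ for every $j$. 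Second, since $f$ is symmetric and convex it lifts to a convex function $F(A)=f(\lambda(A))$ on symmetric matrices: at a diagonal matrix $A=\mathrm{diag}(\lambda_1,\dots,\lambda_n)$ this is equivalent to $\big(\partial_{\lambda_k}\partial_{\lambda_l}f(\lambda)\big)_{k,l}\ge 0$ together with $\dfrac{\partial_{\lambda_k}f(\lambda)-\partial_{\lambda_l}f(\lambda)}{\lambda_k-\lambda_l}\ge 0$ for $k\ne l$ (the divided differences, with the usual limiting meaning when eigenvalues coincide).

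First I would set $c:=F(A)$, fix $x_0\in M\setminus\{0\}$ with an orthonormal frame diagonalizing $A$ there, write $f_k:=\partial_{\lambda_k}f$, $f_{kl}:=\partial_{\lambda_k}\partial_{\lambda_l}f$, and expand $\Delta_M c$ by the second-order chain rule for eigenvalue functions (the relevant formulas are collected in \cref{a:eigen}):
\[
\Delta_M c\;=\;\sum_k f_k\,(\Delta_M A)_{kk}\;+\;\sum_{k,l} f_{kl}\,\big(\nabla_M h_{kk}\cdot\nabla_M h_{ll}\big)\;+\;\sum_{k\ne l}\frac{f_k-f_l}{\lambda_k-\lambda_l}\,|\nabla_M h_{kl}|^2\;=:\;\sum_k f_k\,(\Delta_M A)_{kk}\;+\;Q .
\]
By the convexity of $f$ both the quadratic form $(f_{kl})$ and the divided differences are nonnegative, so $Q\ge 0$; by Simons' identity $(\Delta_M A)_{kk}=-|A|^2\lambda_k$, and by Euler's relation $\sum_k f_k\lambda_k=f(\lambda)=c$ (degree-one homogeneity) the first sum equals $-|A|^2 c$. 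Hence $\Delta_M c\ge -|A|^2 c+Q$ and, as $c\ge 0$,
\[
\tfrac12\Delta_M c^2-|\nabla_M c|^2+|A|^2 c^2\;=\;c\,\Delta_M c+|A|^2 c^2\;\ge\;cQ\;\ge\;0 .
\]
As in \cref{l:alpha}, this is made rigorous by regularizing $F$ near $\{c=0\}$ and near the loci where eigenvalues coincide, a routine approximation.

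The heart of the proof is to upgrade $cQ\ge 0$ to the claimed
\[
cQ\;\ge\;\frac{2}{n-1}\,|\nabla_M c|^2+\frac{2(n-2)}{n-1}\,\frac{c^2}{|x|^2},
\]
equivalently, after splitting $|\nabla_M c|^2$ into its radial part $c^2/|x|^2$ (forced by $(-1)$-homogeneity of $c$) and its spherical part, $cQ\ge\frac{2}{n-1}$ times the spherical part plus $2\,c^2/|x|^2$. This is a \emph{refined Kato inequality}: for $f(\lambda)=|\lambda|$, where $cQ=|\nabla_M A|^2-|\nabla_M|A||^2$, it is the classical one for minimal cones \cite{simons,schoen-simon-yau,simon-lecture}. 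Its proof combines a Cauchy--Schwarz estimate among the off-diagonal components of $\nabla_M A$ --- exploiting its full symmetry and the trace-free condition $\sum_k h_{kkj}=0$ --- with the cone structure: the radial direction $e_1:=x_0/|x_0|$ is a principal direction with $\lambda_1=0$, and $(-1)$-homogeneity of $A$ gives $\nabla_{e_1}h_{ij}=-|x_0|^{-1}h_{ij}$, so $h_{kk1}=-\lambda_k/|x_0|$ and $h_{kl1}=0$ for $k\ne l$. The vanishing radial curvature makes the Cauchy--Schwarz step effectively $(n-1)$-dimensional, which is the origin of the constant $\frac{2}{n-1}$, while the homogeneity identities $\sum_k f_k\lambda_k=c$ and $\sum_l f_{kl}\lambda_l=0$ (the latter from differentiating Euler's identity) feed the $|x|^{-2}$ terms into $Q$, and regrouping them yields the coefficient $\frac{2(n-2)}{n-1}$. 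If $f$ is strictly convex and $c$ is non-constant, then $Q>0$ on an open set and the Cauchy--Schwarz step is strict, so the inequality becomes strict. I expect this last step to be the main obstacle: the classical computation for $|A|$ has constant coefficients, whereas here they are replaced by the variable weights $f_{kl}$ and $\frac{f_k-f_l}{\lambda_k-\lambda_l}$, and one must check that the two homogeneity identities above are precisely what keep the resulting weighted quadratic forms comparable, with the sharp constants, to $|\nabla_M c|^2$ and $c^2/|x|^2$.
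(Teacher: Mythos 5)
Your setup is correct and matches the paper: diagonalize $A$ at a point, use \cref{p:second-der-F} (the chain rule for eigenvalue functions), invoke convexity of $f$ to make the second--order terms $Q$ nonnegative, use $(\Delta_M A)_{kk}=-|A|^2\lambda_k$ and Euler's relation $\sum_k\lambda_k f_k=c$ to kill the zeroth--order term, and arrive at
\[
\tfrac12\Delta_M c^2-|\nabla_M c|^2+|A|^2 c^2\;\ge\;cQ\;\ge\;0.
\]
You also correctly identify all the ingredients needed for the refinement (symmetry of $\nabla_M A$ and $\sum_k h_{kkj}=0$; the radial direction $e_1$ being principal with $\lambda_1=0$; $h_{kk1}=-\lambda_k/|x|$, $h_{kl1}=0$; the constant $\tfrac{2}{n-1}$ coming from the effective $(n-1)$-dimensionality; the final regrouping yielding $\tfrac{2(n-2)}{n-1}$).

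The gap is that the refinement itself --- the step you yourself flag as ``the main obstacle'' --- is never carried out, and it is the entire content of the lemma beyond $cQ\ge0$. What is missing is precisely the weighted Cauchy--Schwarz used by the paper: writing $f_{\lambda_i}-f_{\lambda_k}=\partial^2_{a_{ik}}F(A)\,(\lambda_i-\lambda_k)$, one bounds
\[
c_k^2\;=\;\Big(\sum_{i\ne k}(f_{\lambda_i}-f_{\lambda_k})u_{iik}\Big)^2\;\le\;\Big(\sum_{i\ne k}\partial^2_{a_{ik}}F(A)\,u_{iik}^2\Big)\Big(\sum_{i\ne k}(\lambda_i-\lambda_k)(f_{\lambda_i}-f_{\lambda_k})\Big),
\]
and then one needs the algebraic identity
\[
\sum_{i\ne1}\sum_{i<j}(\lambda_i-\lambda_j)(f_{\lambda_i}-f_{\lambda_j})=(n-1)f,
\]
which follows from Euler's relation together with $\lambda_1=0$ and $\operatorname{tr}A=0$. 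This identity is what produces the factor $n-1$; your appeal to ``effectively $(n-1)$-dimensional'' is the right intuition, but the cancellation is not automatic and must be checked. The identity you invoke instead, $\sum_l f_{kl}\lambda_l=0$, is true (differentiate Euler's identity), but it does not enter the paper's argument; the $|x|^{-2}$ term comes from the much more elementary fact that $c$ is $(-1)$-homogeneous, so $c_1=-c/|x|$, combined with the separate bound $c_1^2\le\big(\sum_{i\ne1}\partial^2_{a_{i1}}F(A)u_{ii1}^2\big)\,c$. Without writing out the Cauchy--Schwarz with these specific weights and verifying the $(n-1)$ identity, there is no way to see that the constants close, and the ``strict if $f$ is strictly convex and $c$ non-constant'' addendum also hinges on analyzing exactly when equality holds in that Cauchy--Schwarz.
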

\begin{proof}
Let $A=(a_{ij})_{ij} \in \R^{n(n+1)/2}$ and for every $x \in M$ we denote
$$
c(x)=F(A(x))=f(\Lambda(x)), \quad \mbox{where}\quad \Lambda(x)=(\lambda_1(A(x)),\dots,\lambda_n(A(x)))
$$
is the vector of the $n$ real eigenvalues of $A$ (possibly equal) and $F \colon \R^{n(n+1)/2} \to \R$ (see \eqref{e:definition-of-F}). Thus, fixed a point $x_0:=(x^1_0,\dots,x^{n+1}_0) \in M$ it is not restrictive to assume, up to a rotation, that in a neighborhood $\mathcal{N}_{x_0}$ of $x_0$, the surface $M$ is given by the graph of a non-negative function $u\colon \R^n \to \R$ such that $$u(x_0)=|\nabla u(x_0)|=0\qquad\text{and}\qquad \nabla^2 u(x_0)\,\text{ is diagonal}.$$
With this local representation, we get that 
\be\label{e:last}
A= \nabla^2 u, \qquad H_M = \Delta u,\qquad 
\partial_k H_M 
= \sum_{i\neq k}u_{iik} + u_{kkk}\qquad \mbox{at }x_0 \in M.
\ee
For the sake of simplicity, through the proof we omit the dependence on the point $x_0$ and the indices of the summation run over $\{1,\dots,n\}$. Thus, in light of \cref{p:second-der-F}, by direct computation, the following estimates hold true at $x_0$
$$|\nabla_M c|^2 = \sum_{i,j,k}\left(f_{\lambda_i} f_{\lambda_j} \partial_k a_{ii}\partial_k a_{jj}\right)
$$
and
\be\label{e:stress}
\begin{aligned}
\frac12 \Delta_M c^2 &= f\sum_{i} f_{\lambda_i}\Delta_M a_{ii}  +\sum_{i,j,k} (f_{\lambda_i\lambda_j}f+f_{\lambda_i} f_{\lambda_j})\partial_{k}a_{ii}\partial_{k}a_{jj} 
+ \frac12\sum_{i<j}\sum_{k}(\partial_k a_{ij})^2 \partial^2_{a_{ij}}F^2(A) \\
&\geq \,f\sum_{i} f_{\lambda_i}  \Delta_M a_{ii} +|\nabla_M c|^2 
+\frac12 \sum_{i<j}\sum_{k}(\partial_k a_{ij})^2\partial^2_{a_{ij}}F^2(A) ,
\end{aligned}
\ee
where 
	\begin{align*}
	\partial_{a_{ij}}^2F^2(A)
 &=2 f\begin{cases}
	\displaystyle 2\frac{f_{\lambda_j}-f_{\lambda_i}}{\lambda_j-\lambda_i}&\,\text{if}\quad \lambda_i\neq\lambda_j\,,\medskip\\
2f_{\lambda_i\lambda_i}
- 2f_{\lambda_i\lambda_j}&\,\text{if}\quad \lambda_i=\lambda_j\,
	\end{cases}\\
  &=2 c\,\partial^2_{a_{ij}}F(A).
	\end{align*}
We stress that in the last line of \eqref{e:stress} we use the convexity and the positivity of $f$. Moreover, if $f$ is strictly convex and $c$ is non constant the inequality in \eqref{e:stress} become strict.\\
On the other hand, since at $x_0$ we have $\partial_{ii} H_M=0$, $\Delta_M a_{ii} = - \lambda_i |A|^2, \lambda_i = u_{ii},$ we deduce, in light of the homogeneity of $f$, that 
$$
\sum_{i}f_{\lambda_i}  \Delta_M a_{ii} = -|A|^2 \sum_i \lambda_i f_{\lambda_i}= -|A|^2 f \qquad\mbox{at }x_0.
$$
Therefore, by collecting the above identities and exploiting the local definition of the coefficient of $A$ (see the first equation in \eqref{e:last}) we get
\be\label{e:remainder}
\frac12\Delta_M c^2 -|\nabla_M c|^2 +|A|^2 c^2 \geq \frac12 \sum_{i<j}\sum_{k}u_{ijk}^2 \partial^2_{a_{ij}}F^2(A) \qquad\mbox{at }x_0.
\ee
Now, being $\nabla^2 f$ positive semi-definite and symmetric, we already know that $\partial^2_{a_{ij}}F^2(A)\geq 0$ if $\lambda_i=\lambda_j$. On the other hand, being $f$ both symmetric and convex, we get
\be\label{e:ak}
(f_{\lambda_i} - f_{\lambda_j})(\lambda_i -\lambda_j ) \geq 0\qquad\mbox{for every }i\neq j,
\ee
which implies the non-negativity of the right hand side in \eqref{e:remainder} (see also equation (4.3) in \cite{js}).
Notice that the last inequality is strict if $f$ is strictly convex and $\lambda_i\neq \lambda_j$. Therefore,
\be\label{e:s}
\frac12\Delta_M c^2 - |\nabla_M c|^2 +|A|^2 c^2 \geq 2c\sum_i \sum_{i\neq j}\partial^2_{a_{ij}}F(A) u_{iij}^2
\ee
where we keep track of the terms in which either $i=k$ or $j=k$. In order to obtain the first estimate, we proceed by considering the gradient of the competitor $c$: for every $k=1,\dots,n$ we know that 
$$
c_k = \sum_i f_{\lambda_i} \partial_k a_{ii} = \sum_i f_{\lambda_i} u_{iik}\quad \mbox{at }x_0.
$$
Nevertheless, being the mean curvature $H_M$ constant, by \eqref{e:last} we infer that 
$$
\sum_{i\neq k} u_{iik} = -u_{kkk},\quad\mbox{which implies}\quad
c_k = \sum_{i\neq k}(f_{\lambda_i}-f_{\lambda_k})u_{iik} \quad \mbox{at }x_0.
$$
Moreover, since $f_{\lambda_i}- f_{\lambda_j} = \partial^2_{a_{ij}}F(A)(\lambda_i - \lambda_j)$ , by the Cauchy-Schwarz inequality we get
\be\label{e:kneq1}
c_k^2 \leq \left(\sum_{i\neq k}\partial^2_{a_{ik}} F(A) u_{iik}^2\right)\left(\sum_{i\neq k}(\lambda_i-\lambda_k)(f_{\lambda_i}-f_{\lambda_k})\right)\quad \mbox{at }x_0.	
\ee
Now, being $M$ a smooth cone outside the origin, it is not restrictive to choose the coordinates at $x_0\neq 0$ in such a way that $x_0=x_0^1e_1$ where $e_1$ is the radial direction. Thus, for $i,j,k \neq 1$ we have
$$
\lambda_1 = 0,\quad u_{ij1}=-\frac{1}{|x|}u_{ij},\quad u_{11k}=0\quad\mbox{at }x_0.
$$
Thus, by considering first $c_1$, we get that at $x_0$
$$
c_1^2 \leq \left(\sum_{i\neq 1}\partial^2_{a_{i1}} F(A) u_{ii1}^2\right)\left(\sum_{i\neq 1}\lambda_i f_{\lambda_i}-f_{\lambda_1}\sum_{i\neq 1}\lambda_i\right) \leq \left(\sum_{i\neq 1}\partial^2_{a_{i1}} F(A) u_{ii1}^2\right)c,
$$
where we use that $f$ is one-homogeneous and $\mathrm{tr}(A)=0$.\\ On the other hand, if we consider $k\neq 1$, we have that the following identities hold true at $x_0$:
\begin{align*}
\sum_{i\neq 1}\sum_{i<j}(\lambda_i-\lambda_j)(f_{\lambda_i}-f_{\lambda_j})& = \frac12 \left(
\sum_{i\neq 1} f_{\lambda_i} \sum_{j\neq \{1,i\}}(\lambda_i - \lambda_j) + \sum_{i\neq 1} \sum_{j\neq \{1,i\}} f_{\lambda_j} (\lambda_j - \lambda_i)\right)\\
& = (n-2) f - \sum_{i\neq 1}\lambda_i\sum_{j\neq \{1,i\}}f_{\lambda_j}\\
& = (n-2) f + \sum_{i\neq 1}\lambda_i f_{\lambda_i}\\
& = (n-1) f,
\end{align*}
where we exploited again the homogeneity of $f$ and $\mathrm{tr}(A)=0$. Thus, applying the previous identity in \eqref{e:kneq1} leads to 
\begin{align*}
c_k^2 &\leq \left(\sum_{i\neq \{1,k\}}\partial^2_{a_{ik}} F(A) u_{iik}^2\right)\left(\sum_{i\neq \{1,k\}}(\lambda_i-\lambda_k)(f_{\lambda_i}-f_{\lambda_k})\right)\\
&\leq \left(\sum_{i\neq \{1,k\}}\partial^2_{a_{ik}} F(A) u_{iik}^2\right)\left(\sum_{i\neq 1}\sum_{i<j}(\lambda_i-\lambda_j)(f_{\lambda_i}-f_{\lambda_j})\right)\\
& =
(n-1)\left(\sum_{i\neq \{1,k\}}\partial^2_{a_{ik}} F(A) u_{iik}^2\right) c.
\end{align*}
We stress that if $f$ is strictly convex, based on the observations related to to \eqref{e:ak}, we have 
$$
\mbox{either}\quad c_k^2 < (n-1)\left(\sum_{i\neq \{1,k\}}\partial^2_{a_{ik}} F(A) u_{iik}^2\right) c\quad\mbox{for some }k,\qquad\mbox{or}\qquad c\equiv 0.
$$
Indeed, if the first inequality were not strict for all $k$, it would follow that all the eigenvalues are equal, which would imply, in light of $\mathrm{tr}(A)=0$, that $c\equiv 0$. Finally, \eqref{e:primo} follows by summing the last inequality over the index $k$. Indeed, by exploiting the homogeneity of both $f$ and $u$, we get
$$
c_1=\sum\limits_i f_{\lambda_i}u_{ii1}=-\sum_i f_{\lambda_i}\frac{u_{ii}}{|x|}=-\frac{1}{|x|}\sum_i \lambda_i f_{\lambda_i}=-\frac{c}{|x|}
$$
and so
$$
\frac12\Delta_M c^2 - |\nabla_M c|^2 +|A|^2 c^2 \geq \frac{2}{n-1} \sum_{k\neq 1}c_k^2 + 2\frac{c^2}{|x|^2} =s 
\frac{2}{n-1} |\nabla_M c|^2 + 2\frac{n-2}{n-1}\frac{c^2}{|x|^2}\qquad\mbox{at }x_0.
$$
As we have already noted throughout the proof, if $c$ is non-constant, the strict convexity of $f$ implies the strict inequality in the latter case.
\end{proof}
By exploiting the approximation argument in the proof of \cref{l:alpha}, we can extend the class of admissible competitors as follows. 
\begin{lemma}
    Let $M$ be a minimizing cone, $A$ be its second fundamental form and $f\in C^{1,1}(\R^n; \R)$ be non-negative, symmetric, convex and homogeneous of degree one. Then, if we set $$c:=f(\lambda_1(A),\dots,\lambda_n(A)),$$ 
    then, for $\alpha \in (0,1)$, the function  $w:=c^\alpha$ satisfies 
    \be\label{e:alpha}
\frac12\Delta_M w^2 - |\nabla_M w|^2 + |A|^2 w^2 \geq \left( 
2\frac{n-2}{n-1}\frac{\alpha}{|x|^2} +\alpha\left(\alpha-1+\frac{2}{n-1}\right)\frac{|\nabla_M c|^2}{c^2}\right)
w^2
   \quad\mbox{on }M,
    \ee
     in a distributional sense. Moreover, 
     \be\label{e:better-a}\frac12\Delta_M w^2 - |\nabla_M w|^2 + |A|^2 w^2 \geq \alpha(\alpha+1)\frac{w^2}{|x|^2}\quad\mbox{on }M,\qquad \mbox{if }\alpha\geq 1-\frac{2}{n-1},
    \ee
     and if $f$ is strictly convex and $c$ is non-trivial, the previous inequalities become strict.
\end{lemma}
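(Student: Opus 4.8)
The plan is to repeat, almost verbatim, the regularization scheme of \cref{l:alpha}, the only change being that the role of the classical Simons inequality is now played by its generalization \cref{l:generalized-simon}. Fix $\alpha\in(0,1)$ and write $c:=f(\lambda_1(A),\dots,\lambda_n(A))$; as in \cref{l:alpha} I would introduce, for $\eps>0$, the Lipschitz competitor $w_\eps:=(c^2+\eps)^{\alpha/2}$, which is smooth on $\{c>0\}\setminus\{0\}$, establish the inequality for $w_\eps$ with an $\eps$-dependent right-hand side, and then let $\eps\to0^+$. Throughout I would rely on the remark following \cref{l:alpha}, to the effect that the distributional identities across the nodal set $\{c=0\}$ are handled precisely by this approximation and need not be spelled out.

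For the main computation, setting $v:=c^2$ the chain rule gives
\[
\tfrac12\Delta_M w_\eps^2-|\nabla_M w_\eps|^2+|A|^2 w_\eps^2=\tfrac{\alpha}{2}(v+\eps)^{\alpha-1}\Delta_M v+\tfrac{\alpha(\alpha-2)}{4}(v+\eps)^{\alpha-2}|\nabla_M v|^2+|A|^2(v+\eps)^{\alpha},
\]
together with $|\nabla_M v|^2=4c^2|\nabla_M c|^2$. Since the coefficient $\tfrac{\alpha}{2}(v+\eps)^{\alpha-1}$ is strictly positive, I may substitute into the first term the lower bound $\Delta_M c^2\ge\frac{2(n+1)}{n-1}|\nabla_M c|^2-2|A|^2c^2+\frac{4(n-2)}{n-1}\frac{c^2}{|x|^2}$ furnished by \eqref{e:primo} without reversing the inequality. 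Collecting the terms, dividing through by $w_\eps^2$ and passing to the limit $\eps\to0^+$ (so that $v+\eps\to c^2$ on $\{c>0\}$), I expect to arrive at
\[
\tfrac12\Delta_M w^2-|\nabla_M w|^2+|A|^2w^2\ge\Big((1-\alpha)|A|^2+2\tfrac{n-2}{n-1}\tfrac{\alpha}{|x|^2}+\alpha\big(\alpha-1+\tfrac{2}{n-1}\big)\tfrac{|\nabla_M c|^2}{c^2}\Big)w^2
\]
in the distributional sense, after which discarding the non-negative term $(1-\alpha)|A|^2$ (using $\alpha<1$) yields \eqref{e:alpha}.

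For \eqref{e:better-a} I would first note that, $M$ being a cone, $A$ is $(-1)$-homogeneous, hence so is $c=f(\lambda(A))$ by the $1$-homogeneity of $f$, and therefore $|\nabla_M c|^2\ge c^2/|x|^2$ on $M$. When $\alpha\ge1-\tfrac{2}{n-1}$ the coefficient $\alpha(\alpha-1+\tfrac{2}{n-1})$ is non-negative, so this bound may be applied to the gradient term in the inequality above; combining it with the $\alpha/|x|^2$ term and using $2\tfrac{n-2}{n-1}+\tfrac{2}{n-1}=2$ one obtains the coefficient $\alpha(2+\alpha-1)=\alpha(\alpha+1)$, which is \eqref{e:better-a}. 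Finally, if $f$ is strictly convex and $c$ is non-constant, then \eqref{e:primo} holds strictly, and this strictness propagates through every step, giving strict inequalities in \eqref{e:alpha} and \eqref{e:better-a}. I anticipate no genuine obstacle: the only delicate point, namely the validity of the distributional computations near $\{c=0\}$, is precisely the one already addressed in \cref{l:alpha} and is no harder in the present generality, so I would simply refer to it.
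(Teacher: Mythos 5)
Your proposal is correct and matches the paper exactly: the authors also state that the proof consists of introducing the regularized competitor $w_\eps:=(c^2+\eps)^{\alpha/2}$ and repeating the computations of Lemma~\ref{l:alpha} verbatim with Lemma~\ref{l:generalized-simon} in place of the classical Simons inequality, and they omit the details for that reason. The intermediate inequality you derive (before dropping $(1-\alpha)|A|^2$), the coefficient bookkeeping leading to $\alpha(\alpha+1)$, the use of $(-1)$-homogeneity of $c$ to get $|\nabla_M c|^2\geq c^2/|x|^2$, and the propagation of strictness from \eqref{e:primo} are all exactly what the paper intends.
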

\begin{proof}
The proof is omitted as it involves considering the regularized sequence $w_\eps:=(c^2+\eps)^{\alpha/2}$, with $\eps>0$, and repeating the same computations as in the proof of \cref{l:alpha}.
\end{proof}
\subsection{The boundary inequality} Inspired by \cite[Section 4]{js}, we focus on the boundary behavior of a class of quadratic functions of the eigenvalues of $A$: given $\Lambda=(\lambda_1(A),\dots,\lambda_n(A))$, we consider 
\be\label{e:candidate4}
f(\Lambda):=\Bigg(\sum_{\lambda_i(A)\geq 0}\lambda_i(A)^2 + a \sum_{\lambda_s(A)<0}\lambda_s^2(A)\Bigg)^{1/2},\quad \mbox{with }a>0.
\ee
and we set $c(x):=f(\Lambda(x))$. It is clear that, up to regularized approximation as in \cref{l:alpha}, such function fulfills all the assumptions of \cref{l:generalized-simon}.\\
For notation simplicity, through the section we omit the dependence of $c$ and $A$ from the spatial variable $x$. In particular, in this section we follows this notations for the eigenvalues of $A$:
\begin{enumerate}
    \item the subscript $i$, to denote non-negative eigenvalues;
    \item the subscript $s$, to denote negative eigenvalues;
    \item the subscript $j$, to denote an eigenvalue without sign assumption.
\end{enumerate}
\begin{lemma}\label{l:generalized-simon-boundary}
Let $E$ be a stationary point of $\mathcal{F}$ and $M:=\partial E$. Assume that $\partial M$ has an isolated singularity at the origin, and that $c$ is  defined as in \eqref{e:candidate4}. Then, 
$$
\frac12(\nabla_M c^2\cdot \eta) = \cos(\theta) H_{\partial M}(c^2+ a|A|^2) + \cot(\theta)\left(\sum_i \lambda_i^3 + a \sum_s \lambda_s^3\right)\quad\mbox{on }(\partial M\setminus \{0\})\cap \{H_{\partial M}\geq 0\}
$$
and 
$$
\frac12(\nabla_M c^2\cdot \eta) = \cos(\theta) H_{\partial M}(c^2+ |A|^2) + \cot(\theta)\left(\sum_i \lambda_i^3 + a \sum_s \lambda_s^3\right)\quad\mbox{on }(\partial M\setminus \{0\})\cap \{H_{\partial M}\leq 0\}.
$$
Then, by choosing $\alpha \in (0,1)$ so that 
\be\label{e:L.alpha.a.zero}
\begin{aligned}
H_{\partial M} c^2(1-\alpha) - a\alpha H_{\partial M}|A|^2 -\frac{\alpha}{\sin(\theta)}\left(\sum_i \lambda_i^3 + a \sum_s \lambda_s^3\right) \geq 0&\quad\mbox{on }(\partial M\setminus \{0\})\cap \{H_{\partial M}\geq 0\},\\
H_{\partial M} c^2(1-\alpha) - \alpha H_{\partial M}|A|^2 -\frac{\alpha}{\sin(\theta)}\left(\sum_i \lambda_i^3 + a \sum_s \lambda_s^3\right) \geq 0&\quad 
\mbox{on }(\partial M\setminus \{0\})\cap \{H_{\partial M}\leq 0\},
\end{aligned}
\ee
we infer that $w:=c^\alpha$ satisfies the boundary inequality in \eqref{e:assumption-js} in a distributional sense.
We stress that in the set $(\partial M\setminus \{0\}) \cap \{H_{\partial M}\neq 0\}$, it is equivalent to choose $\alpha \in (0,1)$ so that 
$$
H_{\partial M}(1-\alpha L)\geq 0 \quad \mbox{on }\partial M \setminus \{0\},
$$
where we set
\be\label{e:L.alpha.a}
L(x):= \begin{cases}
1 + \displaystyle \frac{a|A|^2}{c^2} + \frac{1}{\sin(\theta) H_{\partial M}c^2}\left(\sum_i \lambda_i^3(A) + a \sum_s\lambda_s^3(A)\right) &\mbox{if }H_{\partial M} > 0,\vspace{0.3cm}\\
1 + \displaystyle \frac{|A|^2}{c^2} + \frac{1}{\sin(\theta) H_{\partial M}c^2}\left(\sum_i \lambda_i^3(A) + a \sum_s\lambda_s^3(A)\right) &\mbox{if }H_{\partial M} < 0.
\end{cases}
\ee
\end{lemma}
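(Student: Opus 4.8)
The plan is to follow, weight by weight, the computation of \cref{l:boundary}; the only genuinely new ingredient is that the capillary boundary condition controls the sign of the boundary eigenvalue $\lambda_n(A)$. Fix $x_0\in\partial M\setminus\{0\}$ and observe, via \cref{l:H}, that $\lambda_n(A)=(A\eta\cdot\eta)=-\sin(\theta)H_{\partial M}$ at every regular free boundary point, so that $\lambda_n(A)<0$ on $\{H_{\partial M}>0\}$ and $\lambda_n(A)>0$ on $\{H_{\partial M}<0\}$. Hence the weight that the function $f^2$ from \eqref{e:candidate4} attaches to the direction $\lambda_n(A)$ --- $a$ where $H_{\partial M}>0$, and $1$ where $H_{\partial M}<0$ --- is locally constant near such a point; since $t\mapsto t^2$ and $t\mapsto at^2$ glue in a $C^1$ fashion at the origin, $c^2=f(\Lambda)^2$ is a $C^{1,1}$ function of $A$, hence differentiable along $M$ up to $\partial M$. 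The locus $\{H_{\partial M}=0\}$ needs no separate treatment: there $\lambda_n(A)=0$, both claimed identities reduce to the same right-hand side, and $\nabla_M c^2$ depends continuously on the base point.

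For the first part, I would write $c^2=\sum_j w_j\,\lambda_j(A)^2$ with sign-dependent weights $w_j\in\{1,a\}$ and use that the off-diagonal first derivatives of a symmetric function of a matrix vanish at a diagonal matrix, so at $x_0$ one has $\tfrac12(\nabla_M c^2\cdot\eta)=\sum_j w_j\,\lambda_j(A)\,(\nabla_M a_{jj}\cdot\eta)$. Expressing the $a_{jj}$ through the local graph $u$ by means of \eqref{e:tensor-g} and substituting the boundary identities \eqref{e:diffe0}--\eqref{e:diffe1}, exactly as in \cref{l:boundary} but carrying the extra factors $a$ on the contributions of the negative eigenvalues, I expect the bracket to collapse to $(A\eta\cdot\eta)(c^2+a|A|^2)-\big(\sum_i\lambda_i^3+a\sum_s\lambda_s^3\big)$ on $\{H_{\partial M}\geq0\}$ (where $\lambda_n(A)$ sits in the weight-$a$ part) and to $(A\eta\cdot\eta)(c^2+|A|^2)-\big(\sum_i\lambda_i^3+a\sum_s\lambda_s^3\big)$ on $\{H_{\partial M}\leq0\}$ (where it sits in the weight-$1$ part); this is the exact analogue of the bracket $2|A|^2(A\eta\cdot\eta)-\sum_j\lambda_j^3$ of \cref{l:boundary}. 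Substituting $\cot(\theta)(A\eta\cdot\eta)=-\cos(\theta)H_{\partial M}$ and $(A\eta\cdot\eta)=-\sin(\theta)H_{\partial M}$ from \cref{l:H} then gives the two displayed identities.

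For the second part I would argue as in \cref{l:boundary-L}: with $w:=c^\alpha$ and regularizations $w_\eps:=(c^2+\eps)^{\alpha/2}$ one gets in the limit $\tfrac12(\nabla_M w^2\cdot\eta)=\tfrac{\alpha}{2}c^{2\alpha-2}(\nabla_M c^2\cdot\eta)$, so on $(\partial M\setminus\{0\})\cap\{H_{\partial M}\geq0\}$
\[
\cos(\theta)H_{\partial M}w^2-\tfrac12(\nabla_M w^2\cdot\eta)=\cos(\theta)\,c^{2\alpha-2}\Big[H_{\partial M}c^2(1-\alpha)-a\alpha H_{\partial M}|A|^2-\tfrac{\alpha}{\sin(\theta)}\big(\textstyle\sum_i\lambda_i^3+a\sum_s\lambda_s^3\big)\Big]
\]
and the same with $|A|^2$ in place of $a|A|^2$ where $H_{\partial M}\leq0$. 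Thus the boundary inequality in \eqref{e:assumption-js} for $w$ is exactly \eqref{e:L.alpha.a.zero}, and on $\{H_{\partial M}\neq0\}$ dividing the bracket by $H_{\partial M}c^2$ --- which reverses the inequality where $H_{\partial M}<0$ --- recasts it as $H_{\partial M}(1-\alpha L)\geq0$ with $L$ as in \eqref{e:L.alpha.a}.

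The routine content is the (somewhat lengthy) algebraic simplification that produces the collapsed bracket; the one conceptual point, and the only place where the capillary structure is used, is the first step: \cref{l:H} pins the sign of $\lambda_n(A)$ opposite to that of $H_{\partial M}$, which is simultaneously what makes $c^2$ regular up to $\partial M$ and what forces the dichotomy $\{H_{\partial M}\geq0\}$ versus $\{H_{\partial M}\leq0\}$ in the statement. The only real care needed is bookkeeping which of the two weighted sums contains $\lambda_n(A)$ and checking that, after this reassignment, the remaining terms recombine precisely as in \cref{l:boundary}.
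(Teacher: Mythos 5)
Your proposal is correct and follows essentially the same route as the paper: express $\nabla_M c^2\cdot\eta$ at a boundary point via the graph coordinates of Section~2.1, exploit that off-diagonal first derivatives of the symmetric function $F$ vanish at the diagonal matrix $A(x_0)$ so only $\sum_j w_j\lambda_j\,\partial_n a_{jj}$ survives, plug in the boundary identities \eqref{e:diffe0}--\eqref{e:diffe1} and \cref{l:H}, and then regularize $w_\eps=(c^2+\eps)^{\alpha/2}$ as in \cref{l:boundary-L} for the second part. The sign dichotomy you extract from $\lambda_n(A)=-\sin(\theta)H_{\partial M}$, which places $\lambda_n$ in the weight-$a$ or weight-$1$ block of $f^2$, is exactly the mechanism the paper uses to produce the two cases of the statement, and your remark that $f^2$ is $C^{1,1}$ (because $t\mapsto t^2$ and $t\mapsto at^2$ match to first order at $0$) is a useful clarification of a point the paper leaves implicit. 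You do not carry out the algebra that collapses the bracket, but the stated result of that computation is correct and the rest of the argument (the identity for $\tfrac12(\nabla_M w^2\cdot\eta)$ in the limit, the factorization $\cos(\theta)c^{2\alpha-2}[\cdots]$, and the passage to the $L$-form by dividing by $H_{\partial M}c^2$ with the correct inequality reversal when $H_{\partial M}<0$) matches the paper.
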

\begin{proof}
Let $x_0 \in \partial M \setminus \{0\}$ and $\mathcal{N}_{x_0}$ be a neighborhood of $x_0$ in which we assume that $M$ is given by the graph a non-negative $u$ as in \cref{s:coords}. For notation simplicity we omit the dependence of the eigenvalues from $A$. Since, in $\partial M\cap \mathcal{N}_{x_0}$ we can express the eigenvalues of $A$ in terms of $u$ as in \eqref{e:lambda}, we have
$$
\frac12\partial_n c^2 = \sum_{i}\lambda_i \partial_n a_{ii} + a\sum_{s}\lambda_s \partial_n a_{ss}.
$$
Then, at $x_0$ the following identities hold true: for every $j<n$ we have
\begin{align*}
\partial_n a_{jj} &= 
\partial_n\left(\frac{u_{jj}}{(1+u_n^2)^{1/2}}\right) = \frac{u_{jjn}}{(1+u_n^2)^{1/2}} - \frac{u_{jj}u_n u_{nn}}{(1+u_n^2)^{3/2}}\\
&= -\frac{1}{u_n}\frac{u_{jj}^2-u_{nn}u_{jj}}{(1+u_n^2)^{1/2}} - \frac{u_{jj}u_n u_{nn}}{(1+u_n^2)^{3/2}}\\
&= \cot(\theta)(1+u_n^2)^{1/2}\lambda_j^2 + H_{\partial M}\lambda_j
\end{align*}
and
\begin{align*}
\partial_n a_{nn}&=\partial_n\left(\frac{u_{nn}}{(1+u_n^2)^{3/2}}\right) = \frac{u_{nnn}}{(1+u_n^2)^{3/2}} - 3 \frac{u_n u_{nn}^2}{(1+u_n^2)^{5/2}}\\
&=\frac{1}{(1+u_n^2)^{3/2}}\left(-\cot(\theta)(1+u_{n}^2)\sum_{j<n} u_{jj}^2 - \cot(\theta)\frac{1+3u_{n}^2}{1+u_{n}^2}u_{nn}^2\right)
- 3 \frac{u_n u_{nn}^2}{(1+u_n^2)^{5/2}}\\
&= \frac{-\cot(\theta)}{(1+u_n^2)^{1/2}}\sum_{j<n}u_{jj}^2 - \cot(\theta)\frac{1+3u_n^2}{(1+u_n^2)^{5/2}}u_{nn}^2 + 3 \tan(\theta)\frac{u_{nn}^2}{(1+u_n^2)^{5/2}}\\
&= -\cot(\theta)(1+u_n^2)^{1/2}\sum_{j<n}\lambda_j^2 - \cot(\theta)\frac{u_{nn}^2}{(1+u_n^2)^{5/2}}\\
&= -\frac{1}{\sin(\theta)}\sum_{j<n}\lambda_j^2 - \frac{1}{\sin(\theta)}\lambda_n^2\\
&= \, \frac{H_{\partial M}}{\lambda_n}\sum_{j=1}^n \lambda_j^2,
\end{align*}
where in the last line we use the first equation in \eqref{e:diffe0}. Then, at $x_0$, we have
\begin{align*}
\frac12 \partial_n c^2 =&\, H_{\partial M} c^2 -  \left( \frac{1+a}{2} + \frac{1-a}{2}\frac{\lambda_n}{|\lambda_n|}\right)H_{\partial M}\lambda_n^2\\
& + \cot(\theta)(1+u_n^2)^{1/2}\left(\sum_{i\neq n} \lambda_i^3 + a\sum_{s\neq n} \lambda_s^3\right) + \left( \frac{1+a}{2} + \frac{1-a}{2}\frac{\lambda_n}{|\lambda_n|}\right) H_{\partial M}\sum_j \lambda_j^2\\
=&\, H_{\partial M} c^2 +\frac{1}{\sin(\theta)}\left(\sum_{i\neq n} \lambda_i^3 + a\sum_{s\neq n} \lambda_s^3 + 
\left( \frac{1+a}{2} + \frac{1-a}{2}\frac{\lambda_n}{|\lambda_n|}\right) \lambda_n^3
\right)\\
&+ \left( \frac{1+a}{2} + \frac{1-a}{2}\frac{\lambda_n}{|\lambda_n|}\right) H_{\partial M}\sum_j \lambda_j^2,
\end{align*}
where in the last equality we used that $\lambda_n =-\sin(\theta)H_{\partial M}$ at $x_0$. Thus, if $\lambda_n(x)\neq 0$, we have two possibilities according to the sign of $\lambda_n$:
\begin{enumerate}
    \item[(\rm i)] if $\lambda_n(x_0)<0$
\begin{align*}
\frac12 (\nabla_M c^2\cdot \eta) &= \cos(\theta) H_{\partial M}(c^2+ a|A|^2) + \cot(\theta)\left(\sum_i \lambda_i^3 + a \sum_s \lambda_s^3\right)\\
&= \cos(\theta)H_{\partial M}c^2\left[1 + \frac{a\lambda_n \sum_j \lambda_j^2-\sum_i \lambda_i^3 - a \sum_s \lambda_s^3}{\lambda_n \sum_i \lambda_i^2 + a \lambda_n \sum_s \lambda_s^2}\right]\quad\mbox{at }x_0;
\end{align*}
    \item[(\rm ii)] if $\lambda_n(x_0)>0$
    \begin{align*}
\frac12 (\nabla_M c^2\cdot \eta) &= \cos(\theta) H_{\partial M}(c^2+ |A|^2) + \cot(\theta)\left(\sum_i \lambda_i^3 + a \sum_s \lambda_s^3\right)\\
 &=\cos(\theta)H_{\partial M}c^2\left[1+\frac{\lambda_n \sum_{j} \lambda_j^2 - \sum_i \lambda_i^3 - a \sum_s \lambda_s^3}{\lambda_n \sum_i \lambda_i^2 + a \lambda_n \sum_s \lambda_s^2}\right]
\quad\mbox{at }x_0.
\end{align*}
\end{enumerate}
By exploiting again the relation between $\lambda_n$ and $H_{\partial M}$, we get the desired conditions in terms of $H_{\partial M}$. Finally, the boundary inequality in \eqref{e:assumption-js}  follows by considering the regularized sequence $w_\eps:=(c^2+\eps)^{\alpha/2}$, with $\eps>0$, and by repeating the same computation in the proof of \cref{l:boundary-L}.
\end{proof}
\begin{remark}[The Jerison-Savin approach]\label{remark2}
It is natural to observe a further connection between  capillary surfaces with free boundary and the one-phase Bernoulli free boundary problem. By comparing the different stability inequalities, it is immediate to notice that the eigenvalues of the second fundamental form $A$ play the same role of the eigenvalue of the hessian matrix of minimizer for the one-phase problem. Indeed, if $u$ is such minimizer, since the mean curvature of the free boundary is non-negative (see \cite[Remark 2]{cjk}), then given $f$ as in \eqref{e:candidate4}, the function
    $$
    c(x):=f(\Lambda(x)),\qquad\mbox{with}\quad\Lambda(x)=\Big(\lambda_1(\nabla^2 u(x)),\dots,\lambda_n(\nabla^2 u(x))\Big)
    $$
    satisfies the following differential inequalities (see \cite[Section 4.2 and Section 4.3]{js})
$$
\frac12\Delta c^2 -|\nabla c|^2 \geq \frac{2}{n-1}|\nabla c|^2 + 2\frac{n-2}{n-1}\frac{c^2}{|x|^2} \qquad\mbox{in }\{u>0\}
$$
and
    $$
\frac12(\nabla c^2\cdot \bar{\nu}) = c^2 H_{\partial\{u>0\}}\left[1+\frac{a\lambda_n \sum_{j} \lambda_j^2 - \sum_i \lambda_i^3 - a \sum_s \lambda_s^3}{\lambda_n \sum_i \lambda_i^2 + a \lambda_n \sum_s \lambda_s^2}\right], \quad \mbox{on }\partial \{u>0\}\setminus \{0\},
$$
where $\bar{\nu}$ is the unit normal vector field of $\partial \{u>0\}$ that points out of $\{u>0\}$. Note that the fact that the mean curvature of the free boundary $\partial\{u>0\}$ has a constant sign is consistent with the analogous result for graphical solutions in \cref{thm-main-graphical-cone}. Thus, such similarities suggest that the study of singularities in the two free boundary problems is equivalent, independently on the value of the contact angle.\\
Ultimately, the non-existence of stable singularities is deduced by exploiting the interplay between the interior and the boundary inequalities in \eqref{e:assumption-js}, which are optimally achieved by different configurations (resp. axially symmetric case and the Lawson-type cone in \cite[Example 9]{hong}). 
\end{remark}

\subsection{Proof of \cref{thm-main}}	
Set $n=4$ and, for notation simplicity, through the proof we often omit the dependence from the spatial variable $x$ when it is evident. Now, given $a=4$ we consider the competitor 
\be\label{e:definito1}
w:=c^{1/3},\qquad\mbox{with}\qquad c:=\bigg(\sum_{i}\lambda_i^2 + a \sum_{s}\lambda_s^2\bigg)^{1/2}.
\ee
In light of \cref{l:generalized-simon} (see the second inequality in \eqref{e:alpha}) we already know that the following differential inequality is satisfied in a distributional sense
\be\label{e:inter}
    \frac12\Delta_M w^2 - |\nabla_M w|^2 + |A|^2 w^2 
\geq \frac49\frac{w^2}{|x|^2}\quad\mbox{on }M.
    \ee
On the other hand, for what it concerns the boundary inequality, in order to apply \cref{l:generalized-simon-boundary} is better to address first the boundary behavior of the competitor at point $x_0 \in (\partial M \setminus \{0\})\cap \{H_{\partial M}=0\}$. Thus, by substituting the definition of $w$ in \eqref{e:L.alpha.a.zero}, we want to show that 
\be\label{e:serve}
-\left(\sum_i \lambda_i^3 + 4 \sum_s \lambda_s^3\right) \geq 0\quad\mbox{on }(\partial M\setminus \{0\})\cap \{H_{\partial M}= 0\}
\ee
Being $M$ a smooth cone outside the origin, it is not restrictive to choose the coordinates at $x_0$ in such a way that $x_0=x_0^1e_1$ where $e_1$ is the radial direction. Thus, at $x_0$ we have $\lambda_1 = 0, \lambda_4=0$ (since $\lambda_4 =-\sin(\theta) H_{\partial M}$) and, without loss of generality, we assume that $\lambda_2 \geq 0$. Then, since $\lambda_2 + \lambda_3 =0$ at $x_0$, the claimed inequality follows once we notice that 
\be\label{e:caso-zero}
-\left(\sum_i \lambda_i^3 + 4 \sum_s \lambda_s^3\right)
= - (\lambda_2^3 + 4 \lambda_3^3) = 3\lambda_2^3\geq 0\qquad\mbox{at }x_0. 
\ee
Therefore, we can restrict to the case $H_{\partial M}\neq 0$ and we proceed by computing the function $L$ (see \eqref{e:L.alpha.a} for its precise definition) on $\partial M\cap S^{3}\cap \{H_{\partial M}\neq 0\}$. Thus, on $\partial M\cap S^{3}\cap \{H_{\partial M}\neq 0\}$ we have
$$
L(x):= \begin{cases}
1 + \displaystyle{\frac{4 \lambda_4 \sum_j \lambda_j^2-\sum_i \lambda_i^3 - 4 \sum_s \lambda_s^3}{\lambda_4 \sum_i \lambda_i^2 + 4 \lambda_4 \sum_s \lambda_s^2}} &\mbox{if }H_{\partial M} > 0 \,\, (\mbox{i.e. }\lambda_4(A)< 0), \vspace{0.3cm}\\
1 + \displaystyle{\frac{\lambda_4 \sum_j \lambda_j^2-\sum_i \lambda_i^3 - 4\sum_s \lambda_s^3}{\lambda_4 \sum_i \lambda_i^2 + 4\lambda_4 \sum_s \lambda_s^2}} &\mbox{if }H_{\partial M} < 0 \,\, (\mbox{i.e. }\lambda_4(A)> 0).
\end{cases}
$$
Moreover, if for every $j=1,2,3$ we set $\xi_j:=-\lambda_j/\lambda_4$, we get
$$
L(x) = \begin{cases}
1 + \displaystyle\frac{4 \sum_j \xi_j^2 + \sum_i \xi_i^3 + 4 \sum_s \xi_s^3}{a+\sum_i \xi_i^2 + 4\sum_s \xi_s^2}&\mbox{if }\lambda_4(A)< 0, \vspace{0.3cm}\\
1 + \displaystyle\frac{\sum_j \xi_j^2 + 4\sum_i \xi_i^3 +\sum_s \xi_s^3 }{1+ 4\sum_i \xi_i^2 +\sum_s \xi_s^2 } &\mbox{if }\lambda_4(A)> 0.
\end{cases}
$$
Secondly, since $\mathrm{tr}(A)=0$ we infer that $\sum_j \xi_j =1$ and, being $M$ a smooth cone outside the origin, it is not restrictive to choose the coordinates at $x_0\in \partial M\cap S^3$ in such a way that $x_0=x_0^1e_1$ where $e_1$ is the radial direction. Thus, $\lambda_1(A)\equiv0$ on $\partial M\cap S^{3}$, which implies that $\xi_1=0$. 

By collecting these observations, we have
\begin{align*}
\sup_{\substack{x \in \partial M \cap S^3\\ H_{\partial M}(x)> 0} }L(x) &\leq \max
\left\{1 + \frac{4 \sum_{j} \xi_j^2 +\sum_i \xi_i^3 +4 \sum_{s } \xi_s^3}{\sum_i \xi_i^2 + 4 \sum_s \xi_s^2 + 4}\colon \sum_j \xi_j =1, \xi_1=0\right\}= \max\{K_1, K_2 \} 
\end{align*}
where 
\begin{align}
K_1 :=& \max_{t\in [0,1]}\left(1+\frac{4 (t^2 + (1-t)^2) + t^3 + (1-t)^3}{t^2+(1-t)^2 + 4}\right)=2,\notag\\K_2 :=& \max_{t>1}\left(1+  \frac{4(t^2+(1-t)^2)+t^3+4(1-t)^3}{t^2+4(1-t)^2+4}\right)=3.\label{e:K2}
\end{align}
Similarly, we have 
\begin{align*}
\inf_{\substack{x \in \partial M \cap S^3 \\ H_{\partial M}(x)< 0}} L(x)
&\geq \min
\left\{
1 + \frac{\sum_j \xi_j^2 + 4\sum_i \xi_i^3 +\sum_s \xi_s^3 }{1+ 4\sum_i \xi_i^2 +\sum_s \xi_s^2 }\colon \sum_j \xi_j =1, \xi_1=0\right\}=\min\{k_1, k_2 \}
\end{align*}
where 
\begin{align}
k_1 :=& \min_{t\in [0,1]}\left(1+\frac{(t^2 + (1-t)^2) + 4t^3 + 4(1-t)^3}{1+ 4 t^2 + 4(1-t)^2}\right)=\frac32,\label{e:k2.1}\\
k_2 :=& \min_{t>1}\left(1+  \frac{(t^2+(1-t)^2)+4t^3+(1-t)^3}{1+4t^2+(1-t)^2}\right)=3.\notag
\end{align}
To conclude the proof, we need to address separately two cases.\\

Case 1: $H_{\partial M}\leq 0$ on $\partial M\setminus \{0\}$. Being $\alpha =1/3$, by \eqref{e:k2.1} we already know that 
$$
H_{\partial M}(1-\alpha L) \geq  H_{\partial M}\frac12 >0  \quad\mbox{on }\partial M\cap S^3\cap \{H_{\partial M}<0\}
$$
and so, in light of \eqref{e:serve}, we can apply \cref{l:generalized-simon-boundary} to deduce that $w$ strictly satisfies the boundary inequality in \eqref{e:assumption-js}. Finally, the result follows by applying \cref{p:JS}, since \eqref{e:inter} holds and condition \eqref{e:check} can be rephrased as 
\be\label{e:reph}
\frac49\geq  \left(\frac{n}{2}-\alpha-1\right)^2 =  \frac49.\medskip
\ee

Case 2: $H_{\partial M}\geq 0$ on $\partial M\setminus \{0\}$. In this case we need to address separately the case where $K_2$ is attained. Indeed, since $L\leq 3$ on $\partial M\cap S^3\cap \{H_{\partial M}>0\}$, the choice $\alpha:=1/3$ is admissible and the differential inequalities in \eqref{e:assumption-js} hold in a distributional sense. Notice, that in this case the condition \eqref{e:check} can be rephrased as in \eqref{e:reph}. Therefore, in order to apply \cref{p:JS} we need to show that in a neighborhood of those point in which the boundary condition is an equality, the interior condition can be strictly improved. We emphasize that condition \eqref{e:check} cannot be improved as in this case of axially symmetric cone since
$$
\frac13 = \alpha = 1 - \frac{2}{n-1} = \frac13,
$$
which is an alternative lower bound for admissible exponent $\alpha$ (see \eqref{e:better-a}).

It is immediate to observe that the boundary condition is not strict at points $x_0 \in \partial M \cap S^3$ where $K_2$ is attained (i.e. $t=2$ in \eqref{e:K2}), which corresponds, up to relabel the eigenvalues of $A$, to the case 
$$
\lambda_1 =0,
\quad \lambda_2=-2\lambda_4,
\quad \lambda_3=\lambda_4.
$$ 
Hence, since at $x_0$ we have $\lambda_2\neq \lambda_3$ and $\lambda_4\neq 0$, we infer that in a neighborhood $\mathcal{N}_{x_0}$ of $x_0$ we have that the eigenvalues $\lambda_j$, with $j\neq 4$, are not all equal. Therefore, it is possible to improve \eqref{e:kneq1} for the case $k=4$: indeed, there exists $\delta>0$, depending on $\mathcal{N}_{x_0}$, such that  
$$
c_4^2 \leq \left(\frac32 + \delta\right)\left(\sum_{i\neq \{1,4
\}}\partial^2_{a_{i4}} F(A) u_{ii4}^2\right) c < 3\left(\sum_{i\neq \{1,4
\}}\partial^2_{a_{i4}} F(A) u_{ii4}^2\right) c\quad \mbox{in }\mathcal{N}_{x_0}.
$$
Thus, by continuing the proof as in  \cref{l:generalized-simon}, we infer that the interior inequality holds strictly.
\begin{remark}[The new competitor]\label{rmk:competitors}
To rule out the presence of singularities in dimension 
$n=3$, it suffices to use powers of the squared norm of the second fundamental form $|A|$ as competitor. On the other hand, we emphasize that the competitors of the form \eqref{e:candidate4} are necessary for the study of the higher dimensional case (note that $|A|$ corresponds to the case $a=1$ in \eqref{e:candidate4}).\\
Retracing the proof of \cref{thm-main}, we see that the competitor in \eqref{e:definito1} is the only one among those in \eqref{e:candidate4} that enables the application of the stability criterion \emph{à la} Jerison-Savin.
\end{remark}
\subsection{Proof of \cref{thm-main-graphical-cone}}   
In light of \cref{thm-main}, it is sufficient to show that free boundaries associated to graphical cones have mean curvature with constant sign. Precisely, assume that $M$ is a graph in the $e_{n+1}$ direction of a non-negative Lipschitz function $u\colon \R^n\to \R$, and that $\partial M$ coincides with the boundary of the support of $u$ in $\R^n$.

Set $v:=|\nabla_M x_{n+1}|,$ then $\nabla_M x_{n+1} = e_{n+1}-(e_{n+1}\cdot \nu )\nu$ in $M$ and
\be\label{e:varie-v}
v^2 = 1 - (e_{n+1}\cdot \nu)^2,\qquad
|\nabla_M v|^2 = \frac{(e_{n+1}\cdot \nu)^2}{v^2} |A e_{n+1}|^2 = \frac{1}{v^2}|A e_{n+1}|^2 - |A e_{n+1}|^2.
\ee
By direct computation, we get that
$$
\frac12 \Delta_M v^2 - |\nabla_M v|^2 + |A|^2 v^2=  |A|^2- \frac{1}{v^2}|A e_{n+1}|^2\qquad\mbox{in }M,
$$
where the last right hand side is non-negative by Cauchy-Schwarz inequality, indeed
$$
|A e_{n+1}|^2 = |A \nabla_M x_{n+1}|^2 \leq |A|^2 |\nabla_M x_{n+1}|^2 = |A|^2 v^2.
$$
Now, by exploiting the homogeneity of $M$, if $\Sigma := M \cap \partial B_1$ we get
$$
\frac12\Delta_\Sigma v^2 -|\nabla_\Sigma v|^2 + |A|^2 v^2 \geq 0 \quad  \mbox{in }\Sigma,\qquad
v=\sin(\theta) \quad\mbox{on }\partial \Sigma.
$$
Clearly, $v$ achieves its maximum on the boundary $\partial \Sigma$: if not, by \eqref{e:varie-v} we infer that there exists $x_0 \in \Sigma$ such that either $\nu\cdot e_{n+1}=0$ or $A e_{n+1} = 0$ at $x_0$.  The first case can be ruled out by noticing that in light of the graphicality we have $\nu \cdot e_{n+1} > 0$ in $M$, while in the second case we would get 
$$
\frac12 \Delta_\Sigma v^2 -|\nabla_\Sigma v|^2 = |A|^2 (1-v^2) >0 \quad\mbox{at }x_0,
$$
which contradict the maximum principle. Finally, since $v$ achieves  its maximum on $\partial \Sigma$ (the same result holds in generality in $\partial M\setminus \{0\})$. Then, by the free boundary condition, we get that
$$
\frac12 (\nabla_M v^2 \cdot \eta) = \cos(\theta)\sin^2(\theta) H_{\partial M}\geq 0,\qquad \mbox{on }\partial M\setminus \{0\},
$$
which implies that $H_{\partial M}$ has constant sign in $\{x_{n+1}=0\}$.

\appendix

\section{Symmetric functions of eigenvalues}\label{a:eigen}
In this section we recall some useful result for symmetric function of eigenvalues of a given matrix. Ultimately, these results will then be applied in \cref{s:finally}.
\begin{definition}\label{d:symmetric}
We say that a function $f:\R^n\to\R$ is symmetric if for any permutation $\pi:\{1,\dots,n\}\to\{1,\dots,n\},$
and any $(x_1,\dots,x_n)\in\R^n$,
we have that
\begin{equation}\label{e:definition-symmetric-function}
f\big(x_{\pi(1)},x_{\pi(2)},\dots,x_{\pi(n)}\big)=f(x_1,x_2,\dots,x_n)\,.
\end{equation}
\end{definition}
We notice that if $x_0=(x_0^1,\dots,x_0^n)\in\R^n$ is such that $x_0^i=x_0^j$ for some $i\neq j$, then the symmetry of $f$ implies
$$f(x_0+te_i)\equiv f(x_0+te_j),\quad\text{for every}\quad t\in\R.$$
In particular, if $f\in C^2$, it implies that  $\partial_if(x_0)=\partial_j f(x_0)$ and $\partial_{ii}f(x_0)=\partial_{jj}f(x_0).$
Similarly,
\begin{align*}
f(x_0+Te_i+Se_j)
&=f(x_0)+(S+T)\partial_if(x_0)+\frac{S^2+T^2}2\partial_{ii}f(x_0)+ST\partial_{ij}f(x_0)+o(S^2+T^2)
\end{align*}
Given a symmetric matrix $A\in S(n)$, we denote by $\lambda_1(A),\dots,\lambda_n(A)$ the $n$ real (possibly equal) eigenvalues of $A$. In particular, by identifying a matrix $A \in S(n)$ with a point $A=(a_{ij})_{1\le i\le j\le n}$ of $\R^{(n+1)n/2}$, it is possible to define a symmetric function of eigenvalues $F:\R^{n(n+1)/2}\to\R,$
as
\begin{equation}\label{e:definition-of-F}
F(A):=f\Big(\lambda_1(A),\dots,\lambda_n(A)\Big),
\end{equation}
with $f\colon \R^n \to \R$ being symmetric. We start with the following lemma.
\begin{lemma}\label{l:main-lemma-derivatives}
Let $f:\R^n\to\R$ be a symmetric $C^2$ function and  $F:\R^{n(n+1)/2}\to\R$ be defined by \eqref{e:definition-of-F}. Given a diagonal matrix $A_0\in S(n)$, we denote with $\Lambda(A_0)$ the collection of its eigenvalues
$$\Lambda(A_0)=\big(\lambda_1(A_0),\dots,\lambda_n(A_0)\big).$$
Then, $F$ is twice differentiable at $A_0$ and the following identities hold true:
\begin{align*}
\partial_{a_{ij}}F(A_0)&=\begin{cases}
\partial_if\big(\Lambda(A_0)\big) & \quad\text{if}\quad i=j\,;\\
0 & \quad\text{if}\quad i< j\,.
\end{cases}
\end{align*}
\begin{align*}
\partial_{a_{ij}}^2F(A_0)
=\begin{cases}
\partial_{ii}f\big(\Lambda(A_0)\big)&\quad\text{if}\quad i=j\,; \medskip\\
\displaystyle 2\frac{\partial_jf\big(\Lambda(A_0)\big)-\partial_if\big(\Lambda(A_0)\big)}{\lambda_j(A_0)-\lambda_i(A_0)}&\quad\text{if}\quad i< j\quad\text{and}\quad \lambda_i(A_0)\neq\lambda_j(A_0)\,;\medskip\\
2\partial_{ii}f\big(\Lambda(A_0)\big)
-2\partial_{ij}f\big(\Lambda(A_0)\big)&\quad\text{if}\quad i< j\quad\text{and}\quad \lambda_i(A_0)=\lambda_j(A_0)\,.
\end{cases}
\end{align*}
\begin{align*}
\partial^2_{a_{ij}a_{i'j'}}F(A_0)
=\begin{cases}
\partial_{ii'}f(\Lambda(A_0))&\quad\text{if}\quad i=j\neq i'=j'\ ;\\
0&\quad\text{in all other cases in which}\quad \{i,j\}\neq\{i',j'\}.
\end{cases}
\end{align*}
\end{lemma}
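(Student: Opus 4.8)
The plan rests on two ingredients: the orthogonal invariance $F(Q^\top A Q)=F(A)$ for $Q\in O(n)$, and the compatibility relations forced on the derivatives of $f$ by its symmetry at a point with a repeated eigenvalue, namely $\partial_k f=\partial_l f$, $\partial_{kk}f=\partial_{ll}f$ and $\partial_{km}f=\partial_{lm}f$ whenever $\lambda_k=\lambda_l$ (these are the observations made right after \cref{d:symmetric}). Since $A_0$ is diagonal, every partial derivative of $F$ at $A_0$ in the parametrization by $(a_{kl})_{k\le l}$ is a derivative of $F$ along a coordinate line or a coordinate plane through $A_0$, and on each such slice only a principal submatrix of $A_0$ of size at most three is perturbed, whose spectrum is accessible either exactly or through perturbation theory.

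First I would handle the diagonal directions: along $t\mapsto A_0+tE_{kk}$, and along $(s,t)\mapsto A_0+sE_{kk}+tE_{ll}$ with $k\ne l$, the matrix stays diagonal, so $F$ equals $f$ with its $k$-th (resp.\ $k$-th and $l$-th) argument translated; this yields at once $\partial_{a_{kk}}F(A_0)=\partial_k f(\Lambda(A_0))$, $\partial^2_{a_{kk}}F(A_0)=\partial_{kk}f(\Lambda(A_0))$ and $\partial^2_{a_{kk}a_{ll}}F(A_0)=\partial_{kl}f(\Lambda(A_0))$. Next, for an off-diagonal coordinate $a_{ij}$ with $i<j$, I would restrict to the $2\times 2$ block on rows and columns $\{i,j\}$: along $t\mapsto A_0+t(E_{ij}+E_{ji})$ this block has eigenvalues $\tfrac{\lambda_i+\lambda_j}{2}\pm\bigl((\tfrac{\lambda_i-\lambda_j}{2})^2+t^2\bigr)^{1/2}$ while the remaining eigenvalues are unchanged. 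Evenness in $t$ forces $\partial_{a_{ij}}F(A_0)=0$, and expanding the square root to order $t^2$ when $\lambda_i\ne\lambda_j$ and inserting this into the chain rule for $f$ (using the diagonal first derivatives just computed) gives $\partial^2_{a_{ij}}F(A_0)=2\bigl(\partial_j f-\partial_i f\bigr)/(\lambda_j-\lambda_i)$. The same identity follows, more conceptually, by differentiating twice the constant function $t\mapsto F(e^{-tS}A_0e^{tS})$ with $S=E_{ij}-E_{ji}$, whose velocity and acceleration at $t=0$ are $(\lambda_i-\lambda_j)(E_{ij}+E_{ji})$ and $2(\lambda_i-\lambda_j)(E_{jj}-E_{ii})$.

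It then remains to prove that all the other mixed second derivatives $\partial^2_{a_{ij}a_{i'j'}}F(A_0)$ with $\{i,j\}\ne\{i',j'\}$ vanish. Writing $A_0+sv+tw$ for the two relevant coordinate directions $v,w$ and using second-order perturbation theory, each eigenvalue entering the computation is $\lambda_k+O(s^2,t^2)$ with no linear term (the diagonal entries of $v$ and $w$ vanish in the off-diagonal cases) and, when the relevant entries of $A_0$ are pairwise distinct, with no mixed $st$-term either, so the $st$-coefficient of $F(A_0+sv+tw)$ is $0$. The main obstacle is the degenerate situation in which some of the eigenvalues in play coincide: there the individual eigenvalue branches are merely Lipschitz, so they cannot be differentiated term by term, and one must instead work with a smooth---possibly branch-crossing---parametrization of the spectrum and invoke the symmetry relations above to see that the seemingly dangerous terms cancel. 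That same analysis is what establishes the twice-differentiability of $F$ at $A_0$ and reduces the off-diagonal degenerate case of the second display to the claimed value $2\partial_{ii}f-2\partial_{ij}f$; I would organise it by splitting the spectrum of $A_0$ into its equal blocks and checking the formulas block by block, the one-variable case being exactly the collision of two eigenvalues inside a single $2\times 2$ block.
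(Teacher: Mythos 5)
Your proposal follows essentially the same route as the paper's proof: compute everything by restricting to the relevant $1\times1$, $2\times2$, or $3\times3$ principal submatrix of $A_0$ perturbed along the chosen coordinate directions, expand the eigenvalues to second order in the perturbation parameters, and use the symmetry relations $\partial_kf=\partial_lf$, $\partial_{kk}f=\partial_{ll}f$, $\partial_{km}f=\partial_{lm}f$ when $\lambda_k=\lambda_l$ to control the non-smoothness of individual eigenvalue branches in degenerate configurations. The paper simply carries out the case-by-case enumeration (Cases Mixed.1.1--2.5) that you defer, organised exactly as you suggest by how the entries of $A_0$ collide; your alternative derivation of $\partial_{a_{ij}}F(A_0)=0$ via $t\mapsto F(e^{-tS}A_0e^{tS})$ is a cosmetic variant.
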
	
\begin{proof}
We first compute the partial derivatives $\partial_{a_{ij}}F(A_0)$ and $\partial^2_{a_{ij}}F(A_0)$.
Since the case $i=j$ is immediate to check, we consider $i<j$ and, without loss of generality, we can suppose that $i=1, j=2$ and $n=2$. Thus, for any $t\in\R$, the eigenvalues of the matrix
$$\begin{pmatrix}\lambda_1&t\\
t&\lambda_2\end{pmatrix}$$
are the solutions $\lambda_{1}(t)$ and $\lambda_2(t)$ to $(x-\lambda_{1})(x-\lambda_{2})-t^2=0,$ that is
\begin{align*}
\lambda_{1}(t) :=\frac{(\lambda_1+\lambda_2)+\sqrt{(\lambda_1-\lambda_2)^2+4t^2}}{2}\qquad\text{and}\qquad
\lambda_{2}(t) :=\frac{(\lambda_1+\lambda_2)-\sqrt{(\lambda_1-\lambda_2)^2+4t^2}}{2}
\end{align*}
We consider two cases:\\

\noindent  {\bf Case 1.} $\lambda_2\neq \lambda_1$ (without loss of generality, let us assume $\lambda_2>\lambda_1$). Then
\begin{align*}
\lambda_{2}(t)
=\lambda_2+\frac{t^2}{\lambda_2-\lambda_1}+o(t^2),\qquad\mbox{and}\qquad
\lambda_1(t)
=\lambda_1+\frac{t^2}{\lambda_1-\lambda_2}+o(t^2).
\end{align*}
Thus, we have the following Taylor expansion
$$f\big(\lambda_{1}(t),\lambda_{2}(t)\big)=f\big(\lambda_1,\lambda_2\big)+t^2\frac{\partial_2f\big(\lambda_1,\lambda_2\big)-\partial_1f\big(\lambda_1,\lambda_2\big)}{\lambda_2-\lambda_1}+o(t^2),$$
which gives
$$\partial_{a_{ij}}F(A_0)=0\qquad\text{and}\qquad \partial_{a_{ij}}^2F(A_0)=2\frac{\partial_jf(\Lambda(A_0))-\partial_if(\Lambda(A_0))}{\lambda_j(A_0)-\lambda_{i}(A_0)}\,.$$
\noindent {\bf Case 2.} $\lambda_1=\lambda_2$. Then $
\lambda_{2}(t) =\lambda_2+t$ and $\lambda_{1}(t)=\lambda_1-t$,
which implies
$$f(\lambda_{1}(t),\lambda_{2}(t))=f(\lambda_{1},\lambda_{2})+\frac{t^2}{2}\Big(f_{11}(\lambda_{1},\lambda_{2})+f_{22}(\lambda_{1},\lambda_{2})-2f_{12}(\lambda_{1},\lambda_{2})\Big)+o(t^2).$$
By direct computations, we get
$$\partial_{a_{ij}}F(A_0)=0\qquad\text{and}\qquad \partial_{a_{ij}}^2F(A_0)=\partial_{ii}f(\Lambda(A_0))+\partial_{jj}f(\Lambda(A_0))-2\partial_{ij}f(\Lambda(A_0))\,.$$
\phantom{a}\\
Now, it remains to compute the partial derivatives $\partial^2_{a_{ij} a_{i'j'}} F (A_0)$,
in the case $\{i,j\}\neq \{i',j'\}$. If $i,j,i'$ and $j'$ are all different, then we can compute the whole spectrum as in the previous case. Conversely, if two of the indices coincide, we can restrict our attention to two cases:\\

\noindent\bf Case Mixed.1. \rm $i=i'=j=1$ and $j'=2$.
For every $(t,s)\in\R^2$, we consider the matrix
$$
\begin{pmatrix}
\lambda_1+t& s\\
s&\lambda_2\\
\end{pmatrix}.$$
Its eigenvalues are the solutions to
$$\big(x-(\lambda_1+t)\big)\big(x-\lambda_2\big)-s^2=0,\qquad\big(x-\lambda_1\big)\big(x-\lambda_2\big)-t\big(x-\lambda_2\big)-s^2=0.$$
We consider two subcases.\vspace{0.1cm}\\
\noindent\bf Case Mixed.1.1. \rm $\lambda_1=\lambda_2$. Then the solutions are
$$\lambda_1(s,t)=\lambda_1+\frac{t+\sqrt{t^2+4s^2}}2\qquad\text{and}\qquad\lambda_2(s,t)=\lambda_2+\frac{t-\sqrt{t^2+4s^2}}2.$$
In this case
$$f\big(\lambda_{1}(s,t),\lambda_{2}(s,t)\big)=f(\lambda_1,\lambda_2)+t\partial_1f(\lambda_1,\lambda_2)+\big(2t^2+4s^2\big)\partial_{11}f(\lambda_1,\lambda_2)-s^2\partial_{12}f(\lambda_1,\lambda_2)+o(s^2+t^2),$$
which implies $\partial_{a_{11}a_{12}}F(A_0)=0.$\vspace{0.1cm}\\
\noindent\bf Case Mixed.1.2. \rm $\lambda_1\neq\lambda_2$. Then, one can check that
$$\lambda_2(s,t)=\lambda_2+\frac{s^2}{\lambda_2-\lambda_1}+o(s^2+t^2),\qquad\mbox{and}\qquad
\lambda_1(s,t)=\lambda_1+t+\frac{s^2}{\lambda_1-\lambda_2}+o(s^2+t^2).$$
Then,
\begin{align*}
	f\big(\lambda_{1}(s,t),\lambda_{2}(s,t)\big)&=f(\lambda_1,\lambda_2)+\bigg(t+\frac{s^2}{\lambda_1-\lambda_2}\bigg)\partial_1f(\lambda_1,\lambda_2)\\
	&\qquad +\frac{s^2}{\lambda_1-\lambda_2}\partial_2f(\lambda_1,\lambda_2)+\frac12t^2\partial_{11}f(\lambda_1,\lambda_2)+o(s^2+t^2),
	\end{align*}
so also in this case $\partial_{a_{11}a_{12}}F(A_0)=0$.\\

\noindent\bf Case Mixed.2. \rm $i=i'=1$,  $j=2$, $j'=3$. For every $(t,s)\in\R^2$, we consider the matrix
$$\begin{pmatrix}
\lambda_1& t&s\\
t&\lambda_2&0\\
s&0&\lambda_3
\end{pmatrix}$$
whose characteristic polynomial is defined as
\begin{align*}
P(x)
:=(x-\lambda_1)(x-\lambda_2)(x-\lambda_3)-t^2(x-\lambda_3)-s^2(x-\lambda_2).
\end{align*}
\bf Case Mixed.2.1. \rm $\lambda_2=\lambda_3\neq\lambda_1$. Then
$P(x)
=(x-\lambda_2)\big(x^2-(\lambda_1+\lambda_2)x+\lambda_1\lambda_2-s^2-t^2\big)$
and the eigenvalues are given by:
$$\lambda_1(s,t)=\lambda_1+\frac{t^2+s^2}{\lambda_1-\lambda_2}+o(t^2+s^2)\ ;\quad\lambda_2(s,t)=\lambda_2+\frac{t^2+s^2}{\lambda_2-\lambda_1}+o(t^2+s^2)\ ;\quad\lambda_3(s,t)\equiv\lambda_2.$$
Then,
$$f\big(\lambda_{1}(s,t),\lambda_{2}(s,t)\big)=f(\lambda_1,\lambda_2)+\frac{t^2+s^2}{\lambda_1-\lambda_2}\partial_1f(\lambda_1,\lambda_2)+\frac{t^2+s^2}{\lambda_2-\lambda_1}\partial_2f(\lambda_1,\lambda_2)+o(s^2+t^2),$$
and so we get $\partial_{a_{12}a_{13}}F(A_0)=0.$\vspace{0.1cm}\\
\noindent\bf Case Mixed.2.2. \rm $\lambda_2=\lambda_3=\lambda_1=\lambda$. Then
$$\lambda_{1}(s,t)=\lambda+\sqrt{s^2+t^2}\ ,\qquad \lambda_{2}(s,t)=\lambda-\sqrt{s^2+t^2}\ ,\qquad \lambda_3(s,t)\equiv \lambda\,.$$
Since $\lambda_3$ is constant, we may suppose that $f$ depends only on $\lambda_1$ and $\lambda_2$. Since $\partial_1f(\lambda_1,\lambda_2)=\partial_2f(\lambda_1,\lambda_2)$ and $\partial_{11}f(\lambda_1,\lambda_2)=\partial_{22}f(\lambda_1,\lambda_2)$, we have the Taylor expansion
$$f\big(\lambda_{1}(s,t),\lambda_{2}(s,t)\big)=f(\lambda_1,\lambda_2)+(t^2+s^2)\partial_{11}f(\lambda_1,\lambda_2)-(t^2+s^2)\partial_{12}f(\lambda_1,\lambda_2)+o(s^2+t^2),$$
which implies $\partial_{a_{12}a_{13}}F(A_0)=0.$\vspace{0.1cm}\\
\noindent\bf Case Mixed.2.3. \rm $\lambda_1=\lambda_2\neq\lambda_3$. Then the eigenvalues are solutions to the equation
$$(x-\lambda_3)\Big((x-\lambda_1)^2-t^2\Big)=s^2(x-\lambda_1)\,.$$
Thus, for every $\eps>0$ we can find $\delta>0$ such that for $s^2+t^2<\delta$, there is a solution $\lambda_3(s,t)$ such that
$$\lambda_3-\eps\le \lambda_3(s,t)\le \lambda_3+\eps\,.$$
Thus, for some $C=C(\lambda_3-\lambda_1,\eps,\delta)$, we have
$|\lambda_3(s,t)-\lambda_3|\le Cs^2.$
Plugging this estimate again in the equation we get that
$$\lambda_3(s,t)=\lambda_3+\frac{s^2}{\lambda_1-\lambda_3}+o(s^2+t^2).$$
Similarly, we notice that if $x$ is a solution to one of the equations
\begin{equation}\label{e:ajoajo12}
x-\lambda_1=\frac{s^2+\sqrt{s^4+4t^2(x-\lambda_3)^2}}{2(x-\lambda_3)},\qquad
x-\lambda_1=\frac{s^2-\sqrt{s^4+4t^2(x-\lambda_3)^2}}{2(x-\lambda_3)}
\end{equation}
then $P(x)=0$. It is immediate to check that for every $\eps>0$ there is $\delta>0$ such that for $s^2+t^2<\delta$, there are solutions $\lambda_1(s,t)$ and $\lambda_2(s,t)$ to the two equations in \eqref{e:ajoajo12}, both lying in the interval $(\lambda_1-\eps,\lambda_1+\eps)$. In order to compute the second order expansions of $\lambda_1(s,t)$ and $\lambda_2(s,t)$, we set for $j=1,2$
$$x_j(s,t):=\lambda_j(s,t)-\lambda_j\ ,\qquad \eta:=\lambda_1-\lambda_3\qquad\text{and}\qquad A^2:=s^4+4t^2\eta^2\ .$$
Thus, \eqref{e:ajoajo12} become
\begin{equation*}
x_1=\frac{s^2+\sqrt{A^2+8t^2\eta x_1+4t^2x_1^2}}{2(x_1+\eta)}
\qquad\text{and}\qquad
x_2=\frac{s^2-\sqrt{A^2+8t^2\eta x_1+4t^2x_2^2}}{2(x_2+\eta)}\ .
\end{equation*}
We notice that there is some $C=C(\eps,\delta,\eta)$ such that
$$|x_j(s,t)|\le CA\qquad\text{and}\qquad A\le C\sqrt{s^2+t^2}\,.$$
Now, since
$$
\sqrt{A^2+8t^2\eta x_j+4t^2x_j^2}=A\sqrt{1+8t^2A^{-2}\eta x_j+4t^2A^{-2}x_j^2}
=A\Big(1+4t^2A^{-2}\eta x_j\Big)+o(s^2+t^2)
$$
we get that
\begin{align*}
2x_j^2&+2\eta\Big(1-2t^2A^{-1}\Big)x_j-(s^2+A)+o(s^2+t^2)=0
\end{align*}
Then, by exploiting the definition of $A$, we get  \begin{align*}
x_1&=\frac{\eta}2\bigg(-1+2t^2A^{-1}+\sqrt{1-4t^2A^{-1}+2\eta^{-2}A+4t^4A^{-2}+2\eta^{-2}s^2}\bigg)+o(s^2+t^2)\\
&=\frac{1}{2\eta}\big(A+s^2\big)+o(s^2+t^2)
\end{align*}
and analogously,
\begin{align*}
x_2&=\frac{\eta}2\bigg(-1-2t^2A^{-1}+\sqrt{1+4t^2A^{-1}-2\eta^{-2}A+4t^4A^{-2}+2\eta^{-2}s^2}\bigg)+o(s^2+t^2)\\
&=\frac{1}{2\eta}\big(-A+s^2\big)+o(s^2+t^2).
\end{align*}
Finally, we get
\begin{align*}
\lambda_1(s,t)&=\lambda_1+\frac{1}{2(\lambda_1-\lambda_3)}\bigg(s^2+\sqrt{s^4+4t^2(\lambda_1-\lambda_3)^2}\bigg)+o(s^2+t^2)\\
\lambda_2(s,t)&=\lambda_2+\frac{1}{2(\lambda_1-\lambda_3)}\bigg(s^2-\sqrt{s^4+4t^2(\lambda_1-\lambda_3)^2}\bigg)+o(s^2+t^2)\\
\lambda_3(s,t)&=\lambda_3+\frac{s^2}{\lambda_1-\lambda_3}+o(s^2+t^2).
\end{align*}
Now, since $\partial_1 f(\lambda_1,\lambda_2,\lambda_3)=\partial_2 f(\lambda_1,\lambda_2,\lambda_3)$, we get
\begin{align*}
f\Big(\lambda_1(s,t),\lambda_2(s,t),\lambda_3(s,t)\Big)&=f(\lambda_1,\lambda_2,\lambda_3)+\frac{s^2}{\lambda_1-\lambda_3}\Big(\partial_1 f(\lambda_1,\lambda_2,\lambda_3)+\partial_3 f(\lambda_1,\lambda_2,\lambda_3)\Big)\\
&\qquad+t^2\Big(\partial_{11} f(\lambda_1,\lambda_2,\lambda_3)-\partial_{12} f(\lambda_1,\lambda_2,\lambda_3)\Big)+o(s^2+t^2),
\end{align*}
which again implies that $\partial_{a_{12}a_{13}}F(A_0)=0.$\\
\noindent\bf Case Mixed.2.4. \rm $\lambda_1=\lambda_3\neq\lambda_2$. Clearly, this case coincides with Case Mixed 2.3, up to a relabeling of the indices.\\
\noindent\bf Case Mixed.2.5. \rm $\lambda_1,\lambda_2$ and $\lambda_3$ are all different.
Then
$$P(x)=(x-\lambda_1)(x-\lambda_2)(x-\lambda_3)-t^2(x-\lambda_3)-s^2(x-\lambda_2)\,.$$
Let $\lambda_{j}(s,t)$, $j=1,2,3$, be the solutions to $P(x)=0$. As above, we notice that for any $\eps>0$ there is $\delta>0$ such that for $s^2+t^2<\delta$, there is a solution $\lambda_1(s,t)$ such that $|\lambda_1(s,t)-\lambda_1|< \eps$. Thus, for some $C=C(\lambda_1,\lambda_2,\lambda_3,\eps,\delta)$, we have
$|\lambda_1(s,t)-\lambda_1|\le C(s^2+t^2).$
By exploiting this estimate in the equation we get
\begin{align*}
\lambda_1(s,t)&=\lambda_1+\frac{t^2}{\lambda_1-\lambda_2}+\frac{s^2}{\lambda_1-\lambda_3}+o(s^2+t^2).
\end{align*}
Finally, for $j=2,3$, we reason analogously by rewriting the characteristic equation as
\begin{align*}(x-\lambda_2)\Big((x-\lambda_1)(x-\lambda_3)-s^2\Big)&=t^2(x-\lambda_3)\\
(x-\lambda_3)\Big((x-\lambda_1)(x-\lambda_2)-t^2\Big)&=s^2(x-\lambda_2)\,.\end{align*}
Finally, we get 
$\lambda_2(s,t)=\lambda_2 + t^2 (\lambda_2 -\lambda_1)^{-1}, \lambda_3(s,t)=\lambda_3 + s^2 (\lambda_3 -\lambda_1)^{-1}$ 
and again
\begin{align*}
f\Big(\lambda_1(s,t),\lambda_2(s,t),\lambda_3(s,t)\Big)&=f(\lambda_1,\lambda_2,\lambda_3)+\frac{t^2(\lambda_1-\lambda_3)+s^2(\lambda_1-\lambda_2)}{(\lambda_1-\lambda_3)(\lambda_1-\lambda_2)}\partial_1 f(\lambda_1,\lambda_2,\lambda_3)\\
&\qquad+\frac{t^2}{\lambda_2-\lambda_1}\partial_2 f(\lambda_1,\lambda_2,\lambda_3)+\frac{s^2}{\lambda_3-\lambda_1}\partial_3 f(\lambda_1,\lambda_2,\lambda_3)+o(s^2+t^2),
\end{align*}
leads to $\partial_{a_{12}a_{13}}F(A_0)=0.$
\end{proof}
\begin{prop}\label{p:second-der-F}
	Let $f:\R^n\to\R$ be a symmetric $C^2$ function, $\Omega\subset\R^n$ be an open set and
 $A \colon \O\to S(n)$ be such that:
 \begin{enumerate}
\item for every couple of indices $1\le i,j\le n$, $a_{ij} \in C^2(\Omega)$;
\item there exists $x_0\in\Omega$, such that $A(x_0)$ is diagonal with eigenvalues $$\Lambda(A_0):=\Big(\lambda_1(x_0),\dots,\lambda_n(x_0)\Big)\in \R^n\,.$$
 \end{enumerate}
 Then, the function $F \circ A \colon \O\to \R$ defined as
 \begin{equation}\label{e:definition-of-F(X)}
F(A(X)) :=f\Big(\lambda_1(A(X)),\dots,\lambda_n(A(X))\Big)\,.
\end{equation}
 is twice differentiable at $x_0$ and
	\begin{align*}\frac{\partial}{\partial x_k}\Big[F(A)\Big](x_0)&=\sum_{1\le i\le n}\partial_if(\Lambda(A_0))\partial_k a_{ii}(x_0),\\
	\frac{\partial^2}{\partial {x_k}\partial x_h}\Big[F(A)\Big](x_0)&=\sum_{1\le i\le n}\partial_{i}f\big(\Lambda(A_0)\big)\partial_{kh} a_{ii}(x_0)\\
	&\qquad+\sum_{1\le i<j\le n}\partial_{a_{ij}}^2 F(A_0)\partial_{k} a_{ij}(x_0)\partial_{h} a_{ij}(x_0)\\
	&\qquad\qquad+\sum_{1\le i\le n}\sum_{1\le j\le n}\partial_{ij}f(\Lambda(A_0))\partial_h a_{ii}(x_0)\partial_k a_{jj}(x_0),
	\end{align*}
	where, for $i\neq j$, we set
	\begin{align*}
	\partial_{a_{ij}}^2F(A_0)
	=\begin{cases}
	\displaystyle 2\frac{\partial_jf\big(\Lambda(A_0)\big)-\partial_if\big(\Lambda(A_0)\big)}{\lambda_j(A_0)-\lambda_i(A_0)}&\quad\text{if}\quad \lambda_i(A_0)\neq\lambda_j(A_0)\,;\medskip\\
2\partial_{ii}f\big(\Lambda(A_0)\big)-2\partial_{ij}f\big(\Lambda(A_0)\big)&\quad\text{if}\quad \lambda_i(A_0)=\lambda_j(A_0)\,.
	\end{cases}
	\end{align*}
\end{prop}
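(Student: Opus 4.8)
The assertion is precisely the second-order chain rule for the composition $X\mapsto A(X)\mapsto F(A(X))$, fed with the explicit values of the first and second partials of $F$ at the diagonal matrix $A_0:=A(x_0)$ recorded in \cref{l:main-lemma-derivatives}. The plan is to isolate the one non-formal ingredient — a regularity statement for $F$ near $A_0$ — and then carry out two essentially bookkeeping steps.

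First I would record that $F$ is of class $C^1$ in a neighborhood of $A_0$ and that its gradient $\nabla F$ is differentiable at $A_0$. On the open set of matrices with simple spectrum both facts are classical analytic perturbation theory; the delicate case is $A_0$ itself, where eigenvalues may coincide, and there it is handled exactly as in the proof of \cref{l:main-lemma-derivatives}, i.e.\ by restricting to the coordinate $2$-planes and using the symmetry of $f$ to absorb the lack of smoothness of the individual eigenvalue branches. Equivalently, the computations in \cref{l:main-lemma-derivatives} can be read as producing a genuine second-order Taylor expansion of $F$ at $A_0$,
\[
F(A_0+B)=F(A_0)+\sum_{i\le j}\partial_{a_{ij}}F(A_0)\,b_{ij}+\frac12\sum_{i\le j}\sum_{i'\le j'}\partial^2_{a_{ij}a_{i'j'}}F(A_0)\,b_{ij}b_{i'j'}+o(|B|^2),
\]
with coefficients given by the three displays of that lemma; this is the formulation I would actually write down, since it bypasses any hypothesis on $F$ away from $A_0$.

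For the first derivative: since each $a_{ij}\in C^2(\Omega)$ and $F\in C^1$ near $A_0$, the chain rule gives $\partial_{x_k}[F(A)](X)=\sum_{i\le j}\partial_{a_{ij}}F(A(X))\,\partial_{x_k}a_{ij}(X)$ for $X$ near $x_0$; evaluating at $x_0$ and inserting $\partial_{a_{ij}}F(A_0)=0$ for $i<j$ and $\partial_{a_{ii}}F(A_0)=\partial_i f(\Lambda(A_0))$ from \cref{l:main-lemma-derivatives} yields the first identity. For the second derivative I would differentiate this expression at $x_0$ term by term: the factor $\partial_{x_k}a_{ij}$ is differentiable at $x_0$ because $a_{ij}\in C^2$, and $X\mapsto\partial_{a_{ij}}F(A(X))$ is differentiable at $x_0$ because $\partial_{a_{ij}}F$ is differentiable at $A_0$ and $A$ is differentiable at $x_0$, with differential $\sum_{i'\le j'}\partial^2_{a_{ij}a_{i'j'}}F(A_0)\,\partial_{x_h}a_{i'j'}(x_0)$. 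Hence $F(A)$ is twice differentiable at $x_0$ and
\[
\partial^2_{x_k x_h}[F(A)](x_0)=\sum_{i\le j}\partial_{a_{ij}}F(A_0)\,\partial^2_{x_k x_h}a_{ij}(x_0)+\sum_{i\le j}\sum_{i'\le j'}\partial^2_{a_{ij}a_{i'j'}}F(A_0)\,\partial_{x_k}a_{ij}(x_0)\,\partial_{x_h}a_{i'j'}(x_0)
\]
(equivalently this drops out of the Taylor expansion above by substituting $A(X)=A_0+DA(x_0)[X-x_0]+\frac12 D^2A(x_0)[X-x_0,X-x_0]+o(|X-x_0|^2)$ and collecting the quadratic terms). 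It then remains to substitute the values from \cref{l:main-lemma-derivatives}: in the first sum only $i=j$ survives, giving $\sum_i\partial_i f(\Lambda(A_0))\,\partial^2_{x_k x_h}a_{ii}(x_0)$; in the second sum $\partial^2_{a_{ij}a_{i'j'}}F(A_0)$ vanishes unless the two pairs coincide and are off-diagonal or both pairs are diagonal, so it splits into the diagonal-slice term $\sum_{i<j}\partial^2_{a_{ij}}F(A_0)\,\partial_{x_k}a_{ij}(x_0)\,\partial_{x_h}a_{ij}(x_0)$ and $\sum_{i,j}\partial_{ij}f(\Lambda(A_0))\,\partial_{x_k}a_{ii}(x_0)\,\partial_{x_h}a_{jj}(x_0)$, the latter coinciding with the claimed $\sum_i\sum_j\partial_{ij}f(\Lambda(A_0))\,\partial_h a_{ii}(x_0)\,\partial_k a_{jj}(x_0)$ after relabeling and using $\partial_{ij}f=\partial_{ji}f$.

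The only genuine obstacle is the regularity statement in the second paragraph, namely legitimizing differentiation through the eigenvalue map at a matrix with repeated eigenvalues; everything after it is bookkeeping with the chain rule and the table from \cref{l:main-lemma-derivatives}. I would resolve it by quoting the classical smoothness of symmetric (spectral) functions of a symmetric matrix together with the $2$-plane reduction already performed in the proof of \cref{l:main-lemma-derivatives}, or — cleaner still — by phrasing the whole argument through the second-order Taylor expansion of $F$ at $A_0$, which is exactly what that lemma delivers.
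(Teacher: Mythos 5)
Your proposal is correct and takes essentially the same route as the paper: apply the second-order chain rule to $F\circ A$ and then substitute the values of $\partial_{a_{ij}}F(A_0)$ and $\partial^2_{a_{ij}a_{i'j'}}F(A_0)$ from \cref{l:main-lemma-derivatives}. You are a bit more explicit than the paper about the regularity of $F$ near $A_0$ needed to apply the chain rule (the paper simply writes down the formula and evaluates at $x_0$), but the substance and the key use of \cref{l:main-lemma-derivatives} are identical.
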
	
\begin{proof}
	By direct computation, for every $x\in\Omega$ we have 	
 $$
 \frac{\partial}{\partial x_k}\Big[F(A)\Big](x)=\sum_{1\le i\le j\le n}\frac{\partial F}{\partial a_{ij}}(A(x))\frac{\partial a_{ij}}{\partial x_k}(x)
 $$
 and
	\begin{align*}
	\frac{\partial^2}{\partial {x_k}\partial x_h}\Big[F(A)\Big](x)  &=\sum_{1\le i\le j\le n}\frac{\partial F}{\partial a_{ij}}(A(x))\frac{\partial^2 a_{ij}}{\partial x_k\partial x_h}(x)\\
 &\qquad+\sum_{1\le i'\le n}\sum_{1\le i\le n}\frac{\partial^2 F(A(x))}{\partial a_{i'i'}\partial a_{ii}}\frac{\partial a_{i'i'}}{\partial x_h}(x)\frac{\partial a_{ii}}{\partial x_k}(x)\\
  &\qquad+\sum_{1\le i< j\le n}\frac{\partial^2 F(A(x))}{\partial a_{ij}^2}\frac{\partial a_{ij}}{\partial x_h}(x)\frac{\partial a_{ij}}{\partial x_k}(x).
	\end{align*}
	Now, the claim follows by \cref{l:main-lemma-derivatives} since
 $$\sum_{1\le i\le n}\sum_{1\le j\le n}\frac{\partial^2 F(A(x_0))}{\partial a_{ii}\partial a_{jj}}\frac{\partial a_{ii}}{\partial x_h}(x_0)\frac{\partial a_{jj}}{\partial x_k}(x_0) = \sum_{1\le i<j\le n}\partial_{ij}f(\Lambda(A_0)) \partial_k a_{jj}(x_0)\partial_h a_{ii}(x_0).
$$
\end{proof}


\begin{thebibliography}{999}

\bibitem{almgren}
 F.J.~Almgren. {\it Existence and regularity almost everywhere of solutions to elliptic variational problems
among surfaces of varying topological type and singularity structure.} Ann. Math., 87:321-391, 1968

\bibitem{altcaf} H.W.~Alt, L.A.~Caffarelli. {\it Existence and regularity for a minimum problem with free boundary.} J. Reine Angew. Math. {\bf325} (1981), 105--144.	
     
\bibitem{bombieri-degiorgi-giusti}
E.~Bombieri, E.~De Giorgi, E.~Giusti. {\it Minimal cones and the Bernstein problem.} Invent Math. {\bf 7} (1969) 243-268.	
 	
\bibitem{cf}	L.A. Caffarelli, A.~Friedman. {\it Regularity of the boundary of a capillary drop on an inhomogeneous plane and related variational problems.} Rev. Mat. Iberoamericana. {\bf 1} (1) (1985), 61--84.
	
\bibitem{cjk} L.A.~Caffarelli, D.~Jerison, C.E.~Kenig. {\it Global energy minimizers for free boundary problems and full regularity in three dimensions.} Contemp. Math. {\bf 350} Amer. Math. Soc., Providence RI (2004), 83--97.

\bibitem{cm} L.A.~Caffarelli, A.~Mellet. {\it Capillary drops: contact angle hysteresis and sticking drops.} Calc. Var.
Partial Differential Equations, {\bf 29} (2), 141-160, 2007.

\bibitem{cel}
O.~Chodosh, N.~Edelen, C.~Li. {\it Improved regularity for minimizing capillary hypersurfaces.} ArXiv:2401.08028
(2024), preprint.

\bibitem{DeG}
E. De~Giorgi. {\it Frontiere orientate di misura minima.} Seminario di Matematica della Scuola Normale
Superiore di Pisa. Editrice Tecnico Scientifica, Pisa, 1960.

\bibitem{deMasi-dePhilippis}
L. De~Masi, G. De~Philippis. {\it 
Min-max construction of minimal surfaces with a fixed angle at the boundary.}  ArXiv:2111.09913 (2021), preprint.

	
\bibitem{dephimaggi} G.~{De Philippis}, F.~{Maggi}. {\it Regularity of free boundaries in anisotropic capillarity problems and the validity of Young's law}. {Arch. Ration. Mech. Anal.} {\bf216} (2) (2015), 473--568.

\bibitem{dephi-fusco-morini} G.~{De Philippis}, 
N.~Fusco, M.~Morini. {\it Regularity of capillarity droplets with obstacle}. Trans. Am. Math. Soc. (2024)


\bibitem{demasi-edelen-gasparetto-li}
L.~De Masi, N.~Edelen, C.~Gasparetto, C.~Li. {\it Regularity of minimal surfaces with capillary boundary conditions.} ArXiv:2405.20796 (2024), preprint.


\bibitem{desilva}D.~De Silva. {\it Free boundary regularity for a problem with right hand side.} Interfaces and Free Boundaries {\bf13} (2) (2011), 223--238.


\bibitem{dj} D.~De~Silva, D.~Jerison. {\it A singular energy minimizing free boundary.}  J. Reine Angew. Math. {\bf 635} (2009), 1--21.

\bibitem{finn} R.~Finn. {\it Equilibrium capillary surfaces.} Grundlehren der mathematischen Wissenschaften, vol. 284, Springer-Verlag, New York, 1986.


\bibitem{fraser-schoen}
A.~Fraser, R.~Schoen. {\it Uniqueness theorems for free boundary minimal disks in space forms.} Int. Math. Res. Not. IMRN {\bf 17} (2015), 8268--8274. 

\bibitem{gruter}
M. Gr\"{u}ter. {\it Optimal regularity for codimension one minimal surfaces with a free boundary.}
Manuscripta Math. {\bf 58} (3) (1987), 295--343.

\bibitem{gruter2}
M. Gr\"{u}ter. {\it Regularity results for minimizing currents with a free boundary.} J. Reine Angew. Math., 375/376:307--325, 1987.

\bibitem{gruter-jost}
M. Gr\"{u}ter, J. Jost. {\it Allard type regularity results for varifolds with free boundaries.} Ann. Sc. Norm. Super. Pisa Cl. Sci. {\bf 13} (5)  (1986), 129–169.

\bibitem{hong}
G.~Hong. {\it Singular Homogeneous Solutions to One Phase Free Boundary Problem.} Proc. Amer. Math. Soc. {\bf 143} (9) (2015), 4009--4015.

\bibitem{hong-saturnino}
H.~Hong, A.B.~Saturnino. {\it  Capillary surfaces: Stability, index and curvature estimates}. J. Reine Angew. Math. {\bf 803} (2023), 233--265.

\bibitem{js} D.~Jerison, O.~Savin. {\it Some remarks on stability of cones for the one phase free boundary problem.} Geom. Funct. Anal. {\bf 25} (2015), 1240--1257.

\bibitem{karak}
A.~Karakhanyan. {\it Full and partial regularity for a class of nonlinear free boundary problems.} 
Ann. Inst. Henri Poincaré, Anal. Non Linéaire. {\bf 38} (4), 981-999 (2021).

\bibitem{li}
 C.~Li. {\it A polyhedron comparison theorem for 3-manifolds with positive scalar curvature.} Invent. Math. {\bf 219} (1) (2020), 1--37. 

\bibitem{minmax}
C.~Li, X.~Zhou, J.J.~Zhu. {\it Min-max theory for capillary surfaces}. J. Reine Angew. Math. (2024), 10.1515/crelle-2024-0075

\bibitem{maggi-lecture}
F.~Maggi. {\it Sets of Finite Perimeter and Geometric Variational Problems: An Introduction to Geometric
Measure Theory.} Cambridge Studies in Advanced Mathematics. Cambridge University Press, 2012

\bibitem{rossouam}
A.~Ros, R.~Souam. {\it On stability of capillary surfaces in a ball.} Pacific J. Math. {\bf 178} (2)
(1997), 345--361.

\bibitem{schoen-simon-yau}
R.~Schoen, L.~Simon, S.T.~Yau. {\it Curvature estimates for minimal hypersurfaces.} Acta Math. {\bf 134} (1975), 275 -- 288.

\bibitem{simon-lecture}
L.~Simon. {\it Lectures on Geometric Measure Theory.} Proceedings of the Center for Mathematical Analysis 3, Canberra, (1984), VII+272.

\bibitem{simons}
J.~Simons. {\it Minimal varieties in riemannian manifolds.} Ann. of Math. {\bf 88} (2) (1968), 62–105

\bibitem{taylor}
J.E.~Taylor. {\it Boundary regularity for solutions to various capillarity and free boundary problems. }
Comm. Partial Differential Equations {\bf 2} (4) (1977), 323--357.

\bibitem{v} B.~Velichkov. {\it Regularity of the one-phase free boundaries.}  Lecture Notes of the Unione Matematica Italiana {\bf 28}, Springer, 2023.

\bibitem{wang-xia}
G.~Wang, C.~Xia. {\it Uniqueness of stable capillary hypersurfaces in a ball.} Math. Ann. {\bf 374} (3) (2019), 1845--1882.
        
\bibitem{w} G.~S.~Weiss. {\it Partial regularity for a minimum problem with free boundary.} J. Geom. Anal. {\bf 9} (2) (1999), 317--326.

\bibitem{young} T.~Young. {\it An essay on the cohesion of fluids.} Philos. Trans. Roy. Soc. London {\bf 95} (1805), 65--87.
	
\end{thebibliography}
\end{document}